\newcommand{\la}{\langle}
\newcommand{\ra}{\rangle}
\def\R{{\mathbb R}}
\def\eqnn{\begin{eqnarray*}}
\def\eeqnn{\end{eqnarray*}}
\def\eqn{\begin{eqnarray}}
\def\eeqn{\end{eqnarray}}
\newtheorem{theorem}{Theorem}
\newtheorem{proposition}[theorem]{Proposition}
\newtheorem{lemma}[theorem]{Lemma}
\newtheorem{corollary}[theorem]{Corollary}
\theoremstyle{remark}
\newtheorem{remark}[theorem]{Remark}
\numberwithin{equation}{section}
\numberwithin{theorem}{section}
\numberwithin{table}{section}
\numberwithin{figure}{section}
\title[On the scattering problem for infinitely many fermions]{On the scattering problem for infinitely many fermions in dimensions $d\geq3$ at positive temperature}
\date{\today}
\author{Thomas Chen}
\address{T. Chen,  
Department of Mathematics, University of Texas at Austin.}
\email{tc@math.utexas.edu}
\author{Younghun Hong}
\address{Y. Hong,  
Department of Mathematics, Yonsei University, Seoul, 120-749, Republic of Korea.}
\email{younghun.hong@yonsei.ac.kr}
\author{Nata\v{s}a Pavlovi\'c}
\address{N. Pavlovi\'c,  
Department of Mathematics, University of Texas at Austin.}
\email{natasa@math.utexas.edu}
\begin{document}

\maketitle

\begin{abstract}
In this paper, we study the dynamics of a system of infinitely many fermions in dimensions $d\geq3$ near thermal equilibrium and prove scattering in the case of small perturbation around equilibrium in a certain generalized Sobolev space of density operators.  This work is a continuation of our previous paper \cite{CHP1}, and extends the important recent result of M. Lewin and J. Sabin in \cite{LS2} of a similar type for dimension $d=2$. In the work at hand, we establish new, improved Strichartz estimates that allow us to control the case $d\geq3$.
\end{abstract}

\section{Introduction}
\label{sec-intro}

In this paper, we study the dynamics of a system of infinitely many fermions in dimensions $d\geq3$ near thermal equilibrium. In particular, we prove scattering 
in the case when the perturbation around equilibrium is small in a certain generalized Sobolev space of density operators. This work is a continuation of our previous paper \cite{CHP1}, and extends some important recent result of M. Lewin and J. Sabin in \cite{LS2} of a similar type for two dimensions $(d=2)$. In the work at hand, we are employing new, improved Strichartz estimates that allow us to access higher dimensions.

To set up the problem, we start with a finite system of $N$ fermions interacting via a pair potential $w$  in mean-field description.
The dynamics is described by $N$ coupled Hartree equations 
\begin{equation}\label{NLHsysN}
\left\{\begin{aligned}
	i\partial_t u_1&=(-\Delta+w * \rho)u_1 \quad,& 
	\quad\quad  u_1(t=0)&=u_{1,0}\\
               &\cdots &   & \cdots\\
	i\partial_t u_N&=(-\Delta+w * \rho)u_N \quad,& \quad\quad  u_N(t=0)&=u_{N,0}
\end{aligned}
\right.
\end{equation}
where $\rho$ is the total density of particles
\begin{equation}\label{NLHsysN-rho}
	\rho(t,x)=\sum_{j=1}^N|u_j(t,x)|^2 \,.
\end{equation}
In order to be in agreement with the Pauli principle,
we require that the initial data $\{u_{j,0}\}_{j=1}^N$ 
is an orthonormal family. Given that the Cauchy problem is well-posed in a suitable solution space,
the solution  $\{u_{j,t}\}_{j=1}^N$ continues to be an orthonormal family for $t>0$.

We introduce the one-particle 
density matrix corresponding to \eqref{NLHsysN},
\begin{equation} \label{intro-gamma} 
\gamma_N(t) = \sum_{j=1}^N |u_{j}(t) \rangle\langle u_{j}(t)|.
\end{equation} 
It corresponds to the rank-$N$ orthogonal projection onto the 
span of the orthonormal family $\{u_{j}(t)\}_{j=1}^N$.
The system \eqref{NLHsysN} is then equivalent to a single operator-valued equation
\begin{equation}
    i\partial_t\gamma_N = [-\Delta+w * \rho_{\gamma_N}, \gamma_N]  \label{NLHsys1-N} 
\end{equation}
with initial data
\begin{equation}
    \gamma_N(t=0) =  \sum_{j=1}^N |u_{j,0}\rangle\langle u_{j,0}|, \label{NLHsys1-id-N}
\end{equation}
where the density function is given by
\begin{equation} \label{NLHsys1-rho-N}
    \rho_{\gamma_N}(t,x) = \gamma_N(t,x,x) \,.
\end{equation} 
Orthonormality of the family $\{u_{j}\}_{j=1}^N$  implies that $0\leq\gamma\leq1$.

The expected particle number $\int\rho_N dx$ diverges as $N\rightarrow\infty$
for the system \eqref{NLHsysN} - \eqref{NLHsysN-rho}, respectively \eqref{NLHsys1-N} - \eqref{NLHsys1-rho-N}.
Therefore,
the one-particle density matrix $\gamma=\sum_{j=1}^\infty|u_j\rangle\langle u_j|$
is {\em not} of trace class; on the other hand, it has a bounded operator norm $L^2\rightarrow L^2$. 

For a dilute gas with a finite density 
(for instance, with $\rho(t,x)=\frac1N\sum_{j=1}^N|u_j(t,x)|^2$ as $N\rightarrow\infty$,
or $\rho(t,x)=\sum_{j=1}^\infty \lambda_j|u_j(t,x)|^2$ with $\lambda_j>0$ and $\sum\lambda_j =1$),
the system \eqref{NLHsysN} 
has been extensively analyzed in the literature, see for instance
\cite{ACV1,BdPF-1, BdPF-2, BM1,Ch-1976, Z-1992}. 
In this setting, $\gamma=\lim_{N\rightarrow\infty}\gamma_N$ is trace class.
See also for instance
\cite{BGGM-2003, BEGMY-2002, EESY-2004, FK-2011, BNS,NS} 
and the references therein for its derivation
from a quantum system of interacting fermions;
we remark that the fermionic exchange term is negligible in this limit.

The Cauchy problem, obtained from \eqref{NLHsys1-rho-N} as $N\rightarrow\infty$ but with $\rho_\gamma\not\in L^1$, is much more difficult than in the earlier works noted above. The main problem is to understand in which framework the Cauchy problem
\begin{equation}\label{NLHsys1} 
i\partial_t\gamma = [-\Delta + w * \rho_{\gamma}, \gamma] 
\end{equation}
with initial data
\begin{equation}
    \gamma(0) =  \gamma_0 \, , \label{NLHsys1-id} 
\end{equation}
and density
\begin{equation}
	\rho_{\gamma}(t,x) = \gamma(t,x,x),
\end{equation}
can be meaningfully posed{\footnote{Again, it is required that $0 \leq \gamma_0 \leq 1$, to be in agreement with the Pauli principle; hence,
$\gamma$ has a bounded operator norm.}}. Lewin and Sabin were the first authors who introduced a framework for this problem \cite{LS1,LS2}, which can be described as follows. First, we observe that given a non-negative function $f:\mathbb{R}\to\mathbb{R}_{\geq0}$, the operator $\gamma_f=f(-\Delta)$ is a stationary solution to \eqref{NLHsys1} having infinite particle number, i.e., $\rho_{\gamma_f}\notin L^1$, since the density function $\rho_{\gamma_f}$ is a constant function. Examples of  $\gamma_f$ include the Fermi sea of the non-interacting system. 
For inverse temperature $\beta>0$ and chemical 
potential $\mu>0$, the Fermi sea $\gamma_f$ is given by the Fermi-Dirac distribution
\begin{equation}\label{Fermi-Dirac}
    \gamma_f(x,y) = \int_{\R^d} \frac{e^{ip(x-y)}}{e^{\beta(p^2-\mu)}+1}  dp = 
    \Big(\frac{1}{e^{\beta(-\Delta-\mu)}+1}\Big)(x,y) \,.
\end{equation}
while in the zero temperature limit,
\begin{equation}
    \gamma_f = \Pi_\mu^-=\mathbf{1}_{(-\Delta\leq\mu)} \,.
\end{equation}
Then, the main idea is to consider a perturbation 
\begin{equation}
	Q:=\gamma-\gamma_f
\end{equation}
from the reference state $\gamma_f$, which evolves according to the following Cauchy problem: 
\begin{equation}\label{Q equation}
\left\{\begin{aligned}
i\partial_tQ & =[-\Delta+w*\rho_Q,Q+\gamma_f],\\
Q(0)& =Q_0.
\end{aligned}
\right.
\end{equation} 

In \cite{LS1}, Lewin and Sabin proved that the Cauchy problem \eqref{Q equation} 
for  $Q$ is globally well-posed for $d\geq 2$ in a suitable subspace
of the space of compact operators, provided that the pair interaction $w$
is sufficiently regular. An important tool used in \cite{LS1} was 
a Strichartz estimate for density functions originally established in \cite{FLLS2}, which is extended to the optimal range \cite{FS}. The case of a more singular interaction potential, with $w=\delta$ given by the Dirac delta, was 
analyzed by authors of the paper at hand; in \cite{CHP1}, we proved global well-posedness of the 
perturbative system \eqref{Q equation}, at zero temperature $\gamma_f=\Pi_\mu^-$, by employing new Strichartz estimates for {\em regular} density functions and those for operator kernels, 
which were established in the same paper \cite{CHP1}. 

In the case of a sufficiently regular potential $w$, Lewin-Sabin in \cite{LS2} 
proved scattering for $Q$ in $d=2$ via Strichartz estimates from \cite{FLLS2}.  
The case of higher dimensions was left open, and the purpose of the paper at hand is to address it.

Before we state the main result of this paper in Theorem \ref{thm-main-1},
we present a brief review of the notation. For $p \geq 1$, the  Schatten class $\mathfrak{S}^{p}$ is defined via 
$$\|A\|_{\mathfrak{S}^{p}}=({\rm Tr}(|A|^p))^{1/p},$$ 
while for $\alpha \geq 0$
a Hilbert-Schmidt Sobolev space $\mathcal{H}^\alpha$ is equipped with the norm\footnote{For details, see \eqref{sec3-defHS}.}  
$$\|Q\|_{\mathcal{H}^\alpha}=\|\la\nabla\ra^\alpha Q\la\nabla\ra^\alpha\|_{\mathfrak{S}^2}.$$
Also, we use the standard notation %$\check{g}$ 
$$ \check{g}(x) = \frac{1}{(2\pi)^d} \int e^{ix\cdot \xi} g(\xi) \; d\xi $$
to denote the inverse Fourier transform of a function $g$.

\begin{theorem}
\label{thm-main-1}
Let $d\geq 3$, $\alpha>\frac{d-2}{2}$ and $\alpha_0$ be given by 
\begin{equation}\label{non-relativistic beta-thst}
\left\{\begin{aligned}
&\alpha_0=2\alpha-\tfrac{d-1}{2}&&\textup{if }\alpha<\tfrac{d-1}{2},\\
&\alpha_0<\alpha&&\textup{if }\alpha=\tfrac{d-1}{2},\\
&\alpha_0=\alpha&&\textup{if }\alpha>\tfrac{d-1}{2}
\end{aligned}
\right.
\end{equation}
and $\beta>\frac{d+2}{2}$.

We assume that\\
$(i)$ (assumptions on $f$)
$f$ is real-valued, $\langle\cdot\rangle^\beta f\in L_{r\geq 0}^\infty$, $f'(r)<0$ for $r>0$,
\begin{equation}
\int_0^\infty (r^{d/2-1}|f(r)|+|f'(r)|)dr<\infty\quad \textup{and}\quad \int_{\mathbb{R}^d} \frac{\check{g}(x)}{|x|^{d-2}}dx<\infty,
\end{equation}
where $g(\xi)=f(|\xi|^2)$.\\
$(ii)$ (assumption on $w$) The interaction potential $w=w_1*w_2\in L^1$ is even,
\begin{equation}\label{assumption on w_1}
\|\hat{w}_1\|_{L^{\frac{2d}{d-2}}},\ \|\hat{w}_2\|_{L^{\frac{2d}{d-2}}},\ \|\langle\cdot\rangle^{\alpha_0+\frac{1}{2}}\hat{w}_1\|_{L^\infty},\|\hat{w}_2\|_{L^\infty}, \ \||\cdot|^{-1/2}\langle\cdot\rangle^{-\alpha_0}\hat{w}_2\|_{L^\infty}<\infty,
\end{equation}
and
\begin{equation}\label{assumption on w_2}
\|\hat{w}_-\|_{L^\infty}<2|\mathbb{S}^{d-1}|\Big(\int_{\mathbb{R}^d}\frac{|\check{g}(x)|}{|x|^{d-2}}dx\Big)^{-1}\quad\textup{and}\quad
\hat{w}_+(0)<\frac{2}{\epsilon_g}|\mathbb{S}^{d-1}|,
\end{equation}
where $A_\pm=\max\{\pm A,0\}$ and $\epsilon_g$ is given by \eqref{e_g}. 

Then, there exists small $\epsilon>0$ such that if $\|Q_0\|_{\mathcal{H}^\alpha}\leq\epsilon$, there exists a unique global solution  $Q(t)\in C_t(\mathbb{R};\mathfrak{S}^{2d})$ to the equation \eqref{Q equation} with initial data $Q_0$. Moreover, the associated density function $\rho_Q$ obeys the global space-time bound, 
\begin{equation}\label{eq-dens-globbd-1}
\|w*\rho_Q\|_{L_t^2(\mathbb{R}; L_x^d)}<\infty,
\end{equation}
and $Q(t)$ scatters in $\mathfrak{S}^{2d}$ as $t\to\pm\infty$; in other words, there exist $Q_{\pm}\in\mathfrak{S}^{2d}$ such that $e^{-it\Delta}Q(t)e^{it\Delta}\rightarrow Q_{\pm}$ converges strongly in $\mathfrak{S}^{2d}$ as $t\to\pm\infty$.
\end{theorem}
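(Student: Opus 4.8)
The plan is to set up a contraction mapping / fixed-point argument for the Duhamel reformulation of \eqref{Q equation}, the key point being to close the estimates in the scattering space using the improved Strichartz estimates for density functions announced in the introduction. Writing the free Schrödinger flow $\mathcal{U}(t)A := e^{it\Delta}Ae^{-it\Delta}$, the equation \eqref{Q equation} is equivalent to the integral equation
\begin{equation*}
Q(t)=\mathcal{U}(t)Q_0 - i\int_0^t \mathcal{U}(t-s)\bigl[w*\rho_{Q(s)},\,\gamma_f+Q(s)\bigr]\,ds .
\end{equation*}
The linear response term, in which $Q$ is replaced by $\gamma_f$ inside the commutator, is the dangerous one: it produces the operator $\mathcal{L}(V)=-i\int_0^t\mathcal{U}(t-s)[V(s),\gamma_f]\,ds$ acting on the potential $V=w*\rho_Q$, and the invertibility of $1-\mathcal{L}$ (equivalently, absence of eigenvalues of the Fourier-multiplier operator built from the Lindhard function) is exactly what the smallness conditions \eqref{assumption on w_2} on $\hat w_\pm$ encode. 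First I would therefore reformulate the problem for the potential $V$ itself: apply the density map $\rho_{(\cdot)}$, isolate the linear-in-$\gamma_f$ contribution, and write $(1-\mathcal{L})V = \rho[\mathcal{U}(t)Q_0] + \rho[\text{higher-order terms in }Q]$.

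The second step is the linear analysis: show that $1-\mathcal{L}$ is invertible on $L^2_t(\mathbb{R};L^d_x)$ (and on the weighted variants dictated by the splitting $w=w_1*w_2$). This is where assumptions $(i)$ on $f$ enter — the decay $\langle\cdot\rangle^\beta f\in L^\infty$, monotonicity $f'<0$, and the integrability conditions including $\int \check g(x)|x|^{-(d-2)}dx<\infty$ give pointwise/operator bounds on the kernel of $\mathcal{L}$, and the explicit constants $|\mathbb{S}^{d-1}|$ and $\epsilon_g$ in \eqref{assumption on w_2} are precisely the operator norm thresholds making $\|\mathcal{L}\|<1$. I expect this to rely on a Fourier representation of $\mathcal{L}$ together with the Christ–Kiselev lemma to pass from the global (fixed-time-zero) bound to the retarded Duhamel operator, as in \cite{LS1,LS2,CHP1}.

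The third step is the nonlinear estimates. Using the improved Strichartz estimates for density functions (the new input of this paper) one bounds $\|w*\rho_Q\|_{L^2_tL^d_x}$ by $\|Q\|$ in the scattering norm built on $\mathfrak{S}^{2d}$ with the $\mathcal{H}^\alpha$ regularity on the data; the admissibility exponent $2d$ and the hypothesis $\alpha>\frac{d-2}{2}$, together with the role of $\alpha_0$ in \eqref{non-relativistic beta-thst}, are chosen exactly so that these estimates hold and the nonlinear map $V\mapsto \rho[\text{h.o.t.}]$ is quadratic-plus-higher and maps a small ball into itself contractively. The multiplication by $w$ is controlled using the hypotheses \eqref{assumption on w_1} on $\hat w_1,\hat w_2$, via the Kato–Seiler–Simon inequality and duality for Schatten norms. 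Then I would invoke once more the smallness $\|Q_0\|_{\mathcal{H}^\alpha}\le\epsilon$, combine with the invertibility of $1-\mathcal{L}$, and run the Banach fixed-point theorem to get a unique global $Q\in C_t(\mathbb{R};\mathfrak{S}^{2d})$ with $\|w*\rho_Q\|_{L^2_tL^d_x}<\infty$, i.e. \eqref{eq-dens-globbd-1}.

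Finally, for the scattering statement, once \eqref{eq-dens-globbd-1} and the a priori bounds are in hand, the wave operator limits follow from the Duhamel formula: one shows that $\mathcal{U}(-t)Q(t)=Q_0 - i\int_0^t \mathcal{U}(-s)[w*\rho_{Q(s)},\gamma_f+Q(s)]\,ds$ is Cauchy in $\mathfrak{S}^{2d}$ as $t\to\pm\infty$, because the tail $\int_T^\infty \|\,[w*\rho_Q,\gamma_f+Q]\,\|_{\mathfrak{S}^{2d}}\,ds$ is controlled by $\|w*\rho_Q\|_{L^2_t([T,\infty);L^d_x)}$ times a finite factor and hence vanishes as $T\to\infty$; defining $Q_\pm$ as the limit completes the proof. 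I expect the main obstacle to be the second step — establishing invertibility of $1-\mathcal{L}$ on the correct function space with sharp constants at positive temperature in dimension $d\ge3$ — since this is where the delicate interplay between the decay of $f$, the dimension-dependent singularity $|x|^{-(d-2)}$, and the precise thresholds in \eqref{assumption on w_2} must all be reconciled, and where the new Strichartz estimates are genuinely needed beyond the $d=2$ case of \cite{LS2}.
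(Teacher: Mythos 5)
Your overall architecture matches the paper's (reformulate as a fixed-point problem for the potential/density, invert the linear response operator, close the nonlinear estimates with the new density Strichartz bounds, and deduce scattering from the global bound \eqref{eq-dens-globbd-1}), but two of your key steps would fail as described. First, the invertibility of the linear response operator is \emph{not} a Neumann-series/smallness statement: the hypotheses \eqref{assumption on w_2} only constrain $\hat w_-$ and $\hat w_+(0)$, so $\hat w_+$ may be large away from the origin and $\|\mathcal{L}\|$ need not be less than $1$. The actual argument (Proposition \ref{prop: invertibility}, following \cite{LS2}) diagonalizes $\mathcal{L}$ as the space-time Fourier multiplier $\hat w(\xi)m_f(\tau,\xi)$ and proves a uniform lower bound on $|1+\hat w\,m_f|$ by a case analysis: the sign $m_f(0,\xi)\ge 0$ together with the bound on $\hat w_-$; the non-vanishing of $\operatorname{Im}m_f$ for $\tau\neq 0$, which is where $f'<0$ enters; and the behavior near $(\tau,\xi)=(0,0)$ quantified by $\epsilon_g$. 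Moreover this Plancherel-based argument forces the inversion to be done on $L^2_{t\ge0}L^2_x$, which is precisely why the paper splits $w=w_1*w_2$ and takes the unknown to be $\phi=w_2*\rho_Q$; your proposed inversion of $1-\mathcal{L}$ on $L^2_tL^d_x$ has no comparable mechanism (and no Christ--Kiselev step is involved).

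Second, your scattering argument is too naive: the claimed bound $\int_T^\infty\|[w*\rho_Q,\gamma_f+Q](s)\|_{\mathfrak{S}^{2d}}\,ds\lesssim\|w*\rho_Q\|_{L^2_t([T,\infty);L^d_x)}$ cannot hold, since the left side is an $L^1_t$ quantity while only an $L^2_t$ bound on the potential is available (and multiplication by $V(s)\in L^d_x$ is not even bounded on $L^2$, so the integrand need not lie in $\mathfrak{S}^{2d}$ pointwise in time). The Cauchy property of $e^{-it\Delta}Q(t)e^{it\Delta}$ in $\mathfrak{S}^{2d}$ is instead obtained by expanding the solution through the iterated Duhamel wave operators and invoking the Schatten-space Strichartz bound \eqref{boundedness of wave operator} of \cite{FLLS2}, $\|\mathcal{W}^{(n)}_V\|_{\mathfrak{S}^{2d}}\le (n!)^{-(1/2-\epsilon)}(C\|V\|_{L^2_tL^d_x})^n$, which exploits dispersion rather than absolute integrability in time (Lemma \ref{key lemma}). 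Relatedly, a degree count alone does not close the nonlinear estimates: the quadratic term $\mathcal{A}_{1,1}$ cannot be handled by the $\mathfrak{S}^{2d}$ bounds and is exactly where the new weighted, asymmetric-derivative density Strichartz estimates (Theorem \ref{Strichartz estimates for density functions in the non-relativistic case} and Corollary \ref{Bound on IV}) are needed, via interpolation, before the contraction can be run.
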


\begin{remark}
$(i)$ In Theorem \ref{thm-main-1}, various conditions are imposed on the reference state $\gamma_f$, the interaction potential $w$, and the initial data $Q_0$. Our main goal is to prove scattering in high dimensions. We do not pursue any optimality on the hypotheses. Some physically important examples, such as the Fermi-Dirac distribution \eqref{Fermi-Dirac}, satisfy these assumptions. The assumptions on $f$ and the assumptions in \eqref{assumption on w_2} are used for the linear response theory (see Proposition \ref{prop: invertibility}). The assumptions in \eqref{assumption on w_1} are used for the proof of the global space-time bound \eqref{eq-dens-globbd-1} (see Section \ref{proof of the main theorem}).\\
$(ii)$ The method in our paper might be applied to the two-dimensional case with different conditions on the interaction potential $w$ and initial data $Q_0$ from Lewin and Sabin \cite{LS2}. However, we omit the case $d=2$, as it was already proved in \cite{LS2}; moreover, some exponents would have to be modified in the proof. For instance, we are using the endpoint Strichartz estimate for convenience, but the endpoint estimate is known to be false in $\mathbb{R}^2$ \cite{Tao}.\\
$(iii)$ As a crucial new ingredient that allow us to 
extend the work of 
Lewin-Sabin \cite{LS2} to dimensions higher than 2, 
we establish new 
Strichartz estimates for density functions and density matrices  in Section \ref{sec-Strichartz-density-1}. Compared to the Strichartz estimates derived in \cite{FLLS2}, and used in \cite{LS2}, our Strichartz estimates exhibit an improved summability gain by imposing more regularity on the initial data.
\end{remark}

 \subsection*{Acknowledgements}
The work of T.C. was supported by NSF CAREER grant DMS-1151414. 
The work of Y.H. was supported by NRF grant 2015R1A5A1009350.
The work of N.P. was supported by NSF grant DMS-1516228.

\section{Outline of the Proof of Theorem \ref{thm-main-1}}

In this part of our analysis, we explain the strategy to prove the main result of this article, Theorem \ref{thm-main-1}. First, in Section \ref{strategy 1}, we show that if the density function $\rho_Q$ of the solution to \eqref{Q equation} satisfies the global space-time bound (see \eqref{global bound for density}), then the solution $Q(t)$ scatters. Next, in \S \ref{strategy 2}, we set up a suitable contraction map $\Gamma$ (see \eqref{Gamma}) to construct a solution obeying the desired global space-time bound.

\subsection{A global space-time bound for a density function implies scattering}\label{strategy 1}
We follow the strategy in Lewin and Sabin \cite{LS2}. For simplicity, we present the argument only for the forward-in-time direction, as it can be easily modified to prove scattering backward in time.

Given a time-dependent potential $V=V(t,x)$, we denote by $\mathcal{U}_V(t)$ the linear propagator for the linear Schr\"odinger equation
\begin{equation}\label{linear Schrodinger}
i\partial_t u+\Delta u- Vu=0,
\end{equation}
i.e., $\mathcal{U}_V(t)\phi$ is the solution to \eqref{linear Schrodinger} with initial data $\phi$. We define the \textit{``finite-time" wave operator} $\mathcal{W}_V(t)$ by
\begin{equation}\label{eq-W-def-1}
\mathcal{W}_V(t):=e^{-it\Delta}\mathcal{U}_V(t).
\end{equation}
Iterating the Duhamel formula
\begin{equation}
\mathcal{U}_V(t)=e^{it\Delta}-i\int_0^t e^{i(t-t_1)\Delta} V(t_1)\mathcal{U}_V(t_1)dt_1
\end{equation}
infinitely many times, the wave operator can be written as an infinite sum, 
\begin{equation}\label{wave operator sum}
\mathcal{W}_V(t):=\sum_{n=0}^\infty \mathcal{W}_V^{(n)}(t),
\end{equation}
where $\mathcal{W}_V^{(0)}(t):=\textup{Id}$, and for $n\geq 1$,
\begin{equation}\label{W_V^n}
\begin{aligned}
\mathcal{W}_V^{(n)}(t):&=(-i)^n\int_0^tdt_n\int_0^{t_n}dt_{n-1}\cdots \int_0^{t_2}dt_1e^{-it_n\Delta}V(t_n) e^{it_n\Delta}\\
&\quad\quad\quad\quad\quad\quad\quad\quad \cdot e^{-it_{n-1}\Delta} V(t_{n-1})e^{it_{n-1}\Delta}\cdots e^{-it_1\Delta}V(t_1)e^{it_1\Delta}
\\
&=(-i)\int_0^tdt_n e^{-it_n\Delta}V(t_n) e^{it_n\Delta} \mathcal{W}_V^{(n-1)}(t_n).
\end{aligned}
\end{equation}
By the definition of the finite-time wave operator, the equation \eqref{Q equation} is equivalent to
\begin{equation}\label{wave operator formulation}
Q(t)=e^{it\Delta}\mathcal{W}_{w*\rho_{Q}}(t)(\gamma_f+Q_0)\mathcal{W}_{w*\rho_{Q}}(t)^* e^{-it\Delta}-\gamma_f,
\end{equation}
because $Q(t)=\gamma(t)-\gamma_f$ and
\begin{equation}
\gamma(t)=\mathcal{U}_V(t)\gamma_0 \mathcal{U}_V(t)^*.
\end{equation}
Inserting the sum \eqref{wave operator sum} into the equation \eqref{wave operator formulation}, it becomes
\begin{equation}\label{series expansion for Q}
\begin{aligned}
Q(t)&=e^{it\Delta}\Big(\sum_{m=0}^\infty \mathcal{W}_{w*\rho_Q}^{(m)}(t)\Big)\gamma_f\Big(\sum_{n=0}^\infty \mathcal{W}_{w*\rho_Q}^{(n)}(t)\Big)^*e^{-it\Delta}-\gamma_f\\
&\quad+e^{it\Delta}\Big(\sum_{m=0}^\infty \mathcal{W}_{w*\rho_Q}^{(m)}(t)\Big)Q_0\Big(\sum_{n=0}^\infty \mathcal{W}_{w*\rho_Q}^{(n)}(t)\Big)^*e^{-it\Delta}\\
&=e^{it\Delta}Q_0e^{-it\Delta}+\sum_{(m,n)\neq (0,0)} e^{it\Delta}\mathcal{W}_{w*\rho_Q}^{(m)}(t)\gamma_f\mathcal{W}_{w*\rho_Q}^{(n)}(t)^*e^{-it\Delta}\\
&\quad+\sum_{(m,n)\neq (0,0)}e^{it\Delta}\mathcal{W}_{w*\rho_Q}^{(m)}(t) Q_0 \mathcal{W}_{w*\rho_Q}^{(n)}(t)e^{-it\Delta}.
\end{aligned}
\end{equation}
In \cite{FLLS2}, Frank, Lewin, Lieb and Seiringer prove that if $d\geq 2$, then 
\begin{equation}\label{boundedness of wave operator}
\|\mathcal{W}_V^{(n)}(t_0)\|_{\mathfrak{S}^{2d}}\leq \frac{1}{(n!)^{\frac{1}{2}-\epsilon}}\big(C\|V\|_{L_{[0,+\infty)}^2L_x^d}\big)^n,\quad\forall n\geq 1
\end{equation}
for any small $\epsilon>0$ (see Theorem 2 for $n=1$ and Theorem 3 for $n\geq 2$ in \cite{FLLS2}). Therefore, if the density function obeys the space-time norm bound
\begin{equation}\label{global bound for density}
\|w*\rho_Q\|_{L_t^2([0,+\infty); L_x^d)}<\infty,
\end{equation}
by \eqref{boundedness of wave operator} with $V=w*\rho_Q$, the series is absolutely convergent in $C_t([0,+\infty);\mathfrak{S}^{2d})$, so it is well-defined.

Using this series expansion, we prove that the global space-time bound \eqref{global bound for density} implies scattering.
\begin{lemma}[A global space-time bound for a density function implies scattering]\label{key lemma}
Let $d\geq 3$. Suppose that $Q(t)\in C_t([0,+\infty);\mathfrak{S}^{2d})$ is a solution to the equation \eqref{wave operator formulation} and its density satisfies \eqref{global bound for density}. Then, $Q(t)$ scatters in $\mathfrak{S}^{2d}$ as $t\to+\infty$.
\end{lemma}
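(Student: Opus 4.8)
The plan is to work directly with the series expansion \eqref{series expansion for Q}, which under hypothesis \eqref{global bound for density} converges absolutely in $C_t([0,+\infty);\mathfrak{S}^{2d})$ by virtue of \eqref{boundedness of wave operator}. Scattering amounts to showing that $e^{-it\Delta}Q(t)e^{it\Delta}$ is Cauchy in $\mathfrak{S}^{2d}$ as $t\to+\infty$; equivalently, we must show that each term in the conjugated series,
\begin{equation*}
e^{-it\Delta}Q(t)e^{it\Delta}=Q_0+\sum_{(m,n)\neq(0,0)}\mathcal{W}_{w*\rho_Q}^{(m)}(t)\,\gamma_f\,\mathcal{W}_{w*\rho_Q}^{(n)}(t)^*+\sum_{(m,n)\neq(0,0)}\mathcal{W}_{w*\rho_Q}^{(m)}(t)\,Q_0\,\mathcal{W}_{w*\rho_Q}^{(n)}(t)^*,
\end{equation*}
converges as $t\to+\infty$, with uniform control of the tails so that the limit can be taken termwise. (I am abusing notation slightly: the $e^{\pm it\Delta}$ factors already sit inside the definition of $\mathcal{W}_V^{(n)}(t)$, so what really needs to be shown is that $\mathcal{W}_V^{(n)}(t)$ converges strongly, appropriately interpreted.)

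The key observation, following Lewin--Sabin \cite{LS2}, is that $\mathcal{W}_V^{(n)}(t)$ has the Duhamel recursion structure in \eqref{W_V^n}, so $\mathcal{W}_V^{(n)}(t)-\mathcal{W}_V^{(n)}(s)$ is an integral over $t_n\in[s,t]$ of $e^{-it_n\Delta}V(t_n)e^{it_n\Delta}\mathcal{W}_V^{(n-1)}(t_n)$. First I would establish a ``tail'' version of \eqref{boundedness of wave operator}: by the same Strichartz/Schatten argument of \cite{FLLS2} applied on the time interval $[s,+\infty)$ rather than $[0,+\infty)$, one gets $\|\mathcal{W}_V^{(n)}(t)-\mathcal{W}_V^{(n)}(s)\|_{\mathfrak{S}^{2d}}\lesssim \frac{1}{(n!)^{1/2-\epsilon}}\big(C\|V\|_{L^2_{[0,+\infty)}L^d_x}\big)^{n-1}\|V\|_{L^2_{[s,+\infty)}L^d_x}$ for $t\geq s$, using that the first Duhamel factor contributes a norm localized to $[s,t]\subseteq[s,\infty)$ while the remaining $n-1$ factors are bounded on all of $[0,\infty)$. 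Since $V=w*\rho_Q\in L^2_t L^d_x$, the factor $\|V\|_{L^2_{[s,+\infty)}L^d_x}\to0$ as $s\to\infty$; hence each $\mathcal{W}_V^{(n)}(t)$ is Cauchy in $t$ in $\mathfrak{S}^{2d}$, uniformly, and the prefactor $\frac{1}{(n!)^{1/2-\epsilon}}(C\|V\|_{L^2L^d})^{n-1}$ is summable in $n$. This gives convergence of the sums $\sum_{(m,n)\neq(0,0)}\mathcal{W}^{(m)}\,\gamma_f\,(\mathcal{W}^{(n)})^*$ and the analogous one with $Q_0$, by splitting $\mathcal{W}^{(m)}(t)A(\mathcal{W}^{(n)}(t))^*-\mathcal{W}^{(m)}(s)A(\mathcal{W}^{(n)}(s))^*$ into a telescoping pair of differences, using boundedness of $\gamma_f$ on $L^2$ (so it is a bounded multiplier inside the $\mathfrak{S}^{2d}$ product via Hölder for Schatten norms) and $Q_0\in\mathfrak{S}^{2d}$, and summing the resulting geometric-type series.

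The main obstacle is the term involving $\gamma_f$: unlike $Q_0$, the operator $\gamma_f=f(-\Delta)$ is \emph{not} in any Schatten class, only bounded on $L^2$. So in the product $\mathcal{W}^{(m)}(t)\,\gamma_f\,\mathcal{W}^{(n)}(t)^*$ one cannot put $\gamma_f$ in $\mathfrak{S}^\infty$ and both wave operators in $\mathfrak{S}^{2d}$ and multiply — that would require the $(0,0)$ term $\gamma_f$ itself to lie in $\mathfrak{S}^{2d}$, which is false, and indeed in \eqref{series expansion for Q} the bare $\gamma_f$ is cancelled by the explicit $-\gamma_f$. The resolution, again as in \cite{LS2}, is that for $(m,n)\neq(0,0)$ at least one wave operator carries a nontrivial Duhamel factor, and the associated Strichartz estimate \eqref{boundedness of wave operator} for $\mathcal{W}_V^{(n)}$ with $n\geq1$ is precisely the statement that this factor already absorbs the ``missing'' Schatten integrability: the duality/$TT^*$ argument behind \eqref{boundedness of wave operator} is set up so that $\mathcal{W}_V^{(n)}(t)$ composed with the free flow lands in $\mathfrak{S}^{2d}$ even though no trace-class input is present. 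Concretely, I would treat the $(m,n)=(m,0)$ and $(0,n)$ cases by hand — e.g. $\mathcal{W}^{(m)}(t)\gamma_f$ for $m\geq1$ is estimated in $\mathfrak{S}^{2d}$ directly via the $n=1$ endpoint bound of \cite{FLLS2} combined with $\|\gamma_f\|_{\mathcal{B}(L^2)}<\infty$ — and for $m,n\geq1$ use $\|\mathcal{W}^{(m)}\gamma_f(\mathcal{W}^{(n)})^*\|_{\mathfrak{S}^{2d}}\leq\|\mathcal{W}^{(m)}\|_{\mathfrak{S}^{2d}}\|\gamma_f\|_{\mathcal{B}(L^2)}\|\mathcal{W}^{(n)}\|_{\mathfrak{S}^{2d}}$; the difference estimate then follows from the tail bound above in each slot. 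Once convergence of all three pieces of $e^{-it\Delta}Q(t)e^{it\Delta}$ is in hand, the limit defines $Q_+\in\mathfrak{S}^{2d}$ and strong convergence in $\mathfrak{S}^{2d}$ follows from the norm (hence strong) convergence just established. The backward direction is identical with $[0,+\infty)$ replaced by $(-\infty,0]$.
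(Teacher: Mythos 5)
Your proposal is correct and follows essentially the same route as the paper: expand $e^{-it\Delta}Q(t)e^{it\Delta}$ via the wave-operator series, apply the Schatten bound \eqref{boundedness of wave operator} (in its tail/multilinear form, so that one slot carries $\|w*\rho_Q\|_{L^2_{[s,\infty)}L^d_x}\to0$) to show the Cauchy property in $\mathfrak{S}^{2d}$, handling $\gamma_f$ through its $\mathcal{B}(L^2)$-boundedness and Schatten--H\"older. The paper's proof is just a compressed version of this argument, so no further comparison is needed.
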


\begin{proof}
As in the proof of the absolute convergence of the series, applying the inequality \eqref{boundedness of wave operator} to the series expansion of the difference between $e^{-it_1\Delta}Q(t)e^{it_1\Delta}$ and $e^{-it_2\Delta}Q(t)e^{it_2\Delta}$, one can show that $e^{-it_1\Delta}Q(t)e^{it_1\Delta}-e^{-it_2\Delta}Q(t)e^{it_2\Delta}\to 0$ in $\mathfrak{S}^{2d}$ as $t_1, t_2\to+\infty$. Therefore, $e^{-it\Delta}Q(t)e^{it\Delta}$ has a strong limit $Q_+$ in $\mathfrak{S}^{2d}$ as $t\to+\infty$. That is, $Q(t)$ scatters in $\mathfrak{S}^{2d}$ as $t\to+\infty$.
\end{proof}

\subsection{Set-up for the contraction mapping argument}\label{strategy 2}
By Lemma \ref{key lemma}, the goal is now to prove that the equation \eqref{wave operator formulation} has a unique solution $Q(t)$ in a suitable space obeying the space-time bound \eqref{global bound for density}. 
To this end, as in Lewin-Sabin \cite{LS2}, we write the equation \eqref{wave operator formulation} as an equation for density functions,
\begin{equation}\label{equation for density before expansion}
\rho_{Q(t)}=\rho\Big[e^{it\Delta}\mathcal{W}_{w*\rho_{Q}}(t)(\gamma_f+Q_0)\mathcal{W}_{w*\rho_{Q}}(t)^* e^{-it\Delta}\Big]-\rho_{\gamma_f}.
\end{equation}
One of the advantages of this wave operator formulation in density is that the unknown is given only by the density function, and there is no unknown operator. 

We further simplify the equation by splitting the interaction potential $w$ into $w=w_1*w_2$, and subsequently convolving the density function $\rho_Q$ with $w_2$, 
\begin{equation}
w_2*\rho_{Q(t)}=w_2*\rho\Big[e^{it\Delta}\mathcal{W}_{w_1*(w_2*\rho_{Q})}(t)(\gamma_f+Q_0)\mathcal{W}_{w_1*(w_2*\rho_{Q})}(t)^* e^{-it\Delta}\Big]-w_2*\rho_{\gamma_f}.
\end{equation}
Now we consider the equation for $w_2*\rho_Q$. The motivation for this formulation is that the solution $w_2*\rho_Q$ is expected to be contained in a larger function space (or bounded in a weaker norm) than the one for $\rho_Q$, provided that $w_2$ is sufficiently nice; our constructions will exploit this fact. 

Next, inserting the sum \eqref{wave operator sum} for the finite time wave operators acting on $\gamma_f$, we write
\begin{equation}\label{equation for density}
\begin{aligned}
w_2*\rho_{Q(t)}&=w_2*\rho\Big[e^{it\Delta}\Big(\sum_{m=0}^\infty \mathcal{W}_{w_1*(w_2*\rho_{Q})}^{(m)}(t)\Big)\gamma_f\Big(\sum_{n=0}^\infty \mathcal{W}_{w_1*(w_2*\rho_{Q})}^{(n)}(t)\Big)^* e^{-it\Delta}\Big]\\
&\quad+w_2*\rho\Big[e^{it\Delta}\mathcal{W}_{w_1*(w_2*\rho_{Q})}(t)Q_0\mathcal{W}_{w_1*(w_2*\rho_{Q})}(t)^* e^{-it\Delta}\Big]-w_2*\rho_{\gamma_f}\\
&=w_2*\rho\Big[e^{it\Delta}\Big(\mathcal{W}_{w_1*(w_2*\rho_{Q})}^{(1)}(t)\gamma_f+\gamma_f\mathcal{W}_{w_1*(w_2*\rho_{Q})}^{(1)}(t)^*\Big) e^{-it\Delta}\Big]\\
&\quad+\sum_{m,n=1}^\infty w_2*\rho\Big[e^{it\Delta}\mathcal{W}_{w_1*(w_2*\rho_{Q})}^{(m)}(t)\gamma_f\mathcal{W}_{w_1*(w_2*\rho_{Q})}^{(n)}(t)^* e^{-it\Delta}\Big]\\
&\quad+w_2*\rho\Big[e^{it\Delta}\mathcal{W}_{w_1*(w_2*\rho_{Q})}(t)Q_0\mathcal{W}_{w_1*(w_2*\rho_{Q})}(t)^* e^{-it\Delta}\Big].
\end{aligned}
\end{equation}
Then, introducing the operators, 
\begin{align}
\mathcal{L}(\phi)(t):&=-w_2*\rho\Big[e^{it\Delta}\Big(\mathcal{W}_{w_1*\phi}^{(1)}(t)\gamma_f+\gamma_f\mathcal{W}_{w_1*\phi}^{(1)}(t)^*\Big) e^{-it\Delta}\Big],\label{L operator}\\
\mathcal{A}_{m,n}(\phi)(t):&=w_2*\rho\Big[e^{it\Delta}\mathcal{W}_{w_1*\phi}^{(m)}(t)\gamma_f\mathcal{W}_{w_1*\phi}^{(n)}(t)^* e^{-it\Delta}\Big],\label{A operator}\\
\mathcal{B}(\phi)(t):&=w_2*\rho\Big[e^{it\Delta}\mathcal{W}_{w_1*\phi}(t)Q_0\mathcal{W}_{w_1*\phi}(t)^* e^{-it\Delta}\Big],\label{B operator}
\end{align}
we write
\begin{equation}\label{our formulation}
w_2*\rho_{Q}=-\mathcal{L}(w_2*\rho_Q)+\Big\{\sum_{m,n=1}^\infty\mathcal{A}_{m,n}(w_2*\rho_Q)+\mathcal{B}(w_2*\rho_Q)\Big\}.
\end{equation}
We note that compared to the formulation in \cite{LS2}, the equation \eqref{our formulation} is slightly simpler in that $\mathcal{B}(w_2*\rho_Q)$ is not expanded as an infinite sum. However, due to the linear nature of the operator $\mathcal{L}$, which is not perturbative even for small functions, the series expansion $\sum_{m,n=1}^\infty\mathcal{A}_{m,n}(w_2*\rho_Q)$ does not seem to be avoidable. 

Later in Section \ref{invertibility}, it will be shown that $(1+\mathcal{L})$ is invertible on $L_{t\geq 0}^2L_x^2$. As a result, the equation can be reformulated as
\begin{equation}\label{wave equation formulation}
w_2*\rho_{Q}=(1+\mathcal{L})^{-1}\Big\{\sum_{m,n=1}^\infty\mathcal{A}_{m,n}(w_2*\rho_Q)+\mathcal{B}(w_2*\rho_Q)\Big\}.
\end{equation}
Our goal is now to show that the map $\Gamma$, defined by
\begin{equation}\label{Gamma}
\Gamma(\phi)=(1+\mathcal{L})^{-1}\Big\{\sum_{m,n=1}^\infty\mathcal{A}_{m,n}(\phi)+\mathcal{B}(\phi)\Big\},
\end{equation}
is contractive in a suitable function space, and its solution satisfies the space-time bound
\begin{equation}
\|\phi\|_{L_{t\geq 0}^2L_x^2}<\infty.
\end{equation}
Then, the main theorem follows (see Section \ref{proof of the main theorem}).

\section{Strichartz estimates for density functions}
\label{sec-Strichartz-density-1}

In this section we present the Strichartz estimates that will be used in our analysis.
First, we give an overview of the notation. 

\subsection{Notation} 
As already mentioned in Section \ref{sec-intro}, 
we denote by $\mathfrak{S}^p$ the Schatten spaces, equipped with the norms
\begin{equation}
    \|Q\|_{\mathfrak{S}^p} := \Big({\rm Tr}|Q|^p \Big)^{1/p},
\end{equation}
for $p\geq 1$.

For $\alpha\geq 0$, we define the \textit{Hilbert-Schmidt Sobolev space} $\mathcal{H}^\alpha$ as the collection of Hilbert-Schmidt operators (which are not necessarily self-adjoint) with a finite norm
\begin{equation} \label{sec3-defHS} 
\|\gamma_0\|_{\mathcal{H}^\alpha}:=\|\la\nabla\ra^\alpha\gamma_0\la\nabla\ra^\alpha\|_{\mathfrak{S}^2}=\|\la\nabla_x\ra^\alpha\la\nabla_{x'}\ra^\alpha \gamma_0(x,x')\|_{L_x^2L_{x'}^2}.
\end{equation}
Here, $\gamma_0(x,x')$ is the integral kernel of $\gamma_0$, i.e.,  
\begin{equation}
(\gamma_0 g)(x)=\int_{\mathbb{R}^d}\gamma_0(x,x')g(x')dx'.
\end{equation}

In order to review Strichartz estimates for operator kernels in Subsection \ref{subs3-Str}, 
we need to  recall notation from \cite{CHP1} related to Strichartz norms. 
An exponent pair $(q,r)$ is \textit{(Strichartz) admissible} if $2\leq q,r\leq\infty$, $(q,r,d)\neq (2,\infty,2)$ and 
\begin{equation}
\frac{2}{q}+\frac{d}{r}=\frac{d}{2}.
\end{equation}
Assume that $\gamma(t)$ is a time-dependent operator  on an interval $I\subset\mathbb{R}$. 
Then, its Strichartz norm is defined by
\begin{equation}
\begin{aligned}\label{eq-StrichNorm-def-1}
    \|\gamma(t)\|_{\mathcal{S}^\alpha(I)}:=\sup_{(q,r)\textup{: admissible}}&
    \Big\{\|\la\nabla_x\ra^\alpha\la\nabla_{x'}\ra^\alpha\gamma(t,x,x')\|_{L_{t}^q(I; L_x^rL_{x'}^2)}\\
    &+\|\la\nabla_x\ra^\alpha\la\nabla_{x'}\ra^\alpha\gamma(t,x,x')\|_{L_{t}^q (I;L_{x'}^rL_{x}^2)}\Big\}.
\end{aligned}
\end{equation}
It is clear that $\mathcal{S}^\alpha(I) \hookrightarrow L_t^\infty(I;\mathcal{H}^\alpha)$.

We identify the operator $e^{it\Delta}\gamma_0 e^{-it\Delta}$ with its integral kernel
\begin{equation}
    (e^{it\Delta}\gamma_0 e^{-it\Delta})(x,x')=(e^{it(\Delta_x-\Delta_{x'})}\gamma_0)(x,x').
\end{equation}

\subsection{Strichartz estimates for density functions}
In this section, we prove new Strichartz estimates for density functions, which extend Strichartz estimates proved in the authors' previous work  \cite{CHP1} by allowing 
asymmetric derivatives ($\alpha_1$ not necessarily equal to $\alpha_2$). Those are presented in Theorem \ref{Strichartz estimates for density functions in the non-relativistic case}, and as a main application, we obtain Corollary  \ref{Bound on IV}, which we use to control the operators $\mathcal{A}_{m,n}$.

\begin{theorem}[Strichartz estimates for density functions]\label{Strichartz estimates for density functions in the non-relativistic case}
Suppose that $\alpha_0, \alpha_1,\alpha_2\geq 0$. When $d=1$, we assume that $\alpha=\min\{\alpha_1,\alpha_2\}$. When $d\geq 2$, we assume that
\begin{equation}\label{non-relativistic alpha}
\alpha_1+\alpha_2>\tfrac{d-1}{2}
\end{equation}
and
\begin{equation}\label{non-relativistic beta}
\left\{\begin{aligned}
&\alpha_0=\alpha_1+\alpha_2-\tfrac{d-1}{2}&&\textup{if }\max\{\alpha_1,\alpha_2\}<\tfrac{d-1}{2},\\
&\alpha_0<\min\{\alpha_1,\alpha_2\}&&\textup{if }\max\{\alpha_1,\alpha_2\}=\tfrac{d-1}{2},\\
&\alpha_0=\min\{\alpha_1,\alpha_2\}&&\textup{if }\max\{\alpha_1,\alpha_2\}>\tfrac{d-1}{2}.
\end{aligned}
\right.
\end{equation}
Then,
\begin{equation}\label{non-relativistic Strichartz}
\big\||\nabla|^{1/2} \rho_{e^{it\Delta}\gamma_0 e^{-it\Delta}}\big\|_{L_{t\in\mathbb{R}}^2H_x^{\alpha_0}}\lesssim \|\la\nabla\ra^{\alpha_1}\gamma_0\la\nabla\ra^{\alpha_2}\|_{\mathfrak{S}^2}.
\end{equation}
\end{theorem}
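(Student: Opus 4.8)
The plan is to reduce the density Strichartz estimate \eqref{non-relativistic Strichartz} to a known ``symmetric'' estimate plus an interpolation/duality argument that handles the asymmetry in $(\alpha_1,\alpha_2)$. Writing $\gamma_t = e^{it\Delta}\gamma_0 e^{-it\Delta}$, the key observation is that $\rho_{\gamma_t}$ depends bilinearly on the kernel $\gamma_0(x,x')$ (in fact sesquilinearly, since $\rho_{\gamma_t}(x) = \int (e^{it(\Delta_x-\Delta_{x'})}\gamma_0)(x,x')\big|_{x'=x}$), so one can view the map $\gamma_0 \mapsto |\nabla|^{1/2}\rho_{\gamma_t}$ through a $TT^*$-type mechanism: by duality against $V\in L^2_t H^{-\alpha_0}_x$, the estimate \eqref{non-relativistic Strichartz} is equivalent to a bound on $\int V(t,x)\,\rho_{\gamma_t}(x)\,dx\,dt$, which unfolds into $\mathrm{Tr}\big(\gamma_0 \cdot \int e^{-it\Delta}\langle\nabla\rangle^{-\alpha_0}(|\nabla|^{1/2}\overline{V})\langle\nabla\rangle^{-\alpha_0}\,\text{(multiplication)}\,e^{it\Delta}\,dt\big)$-type expressions; pulling the $\langle\nabla\rangle^{\alpha_i}$ weights onto $\gamma_0$ then leaves an operator built from $V$ whose $\mathfrak{S}^2$-norm (equivalently, whose $\mathfrak{S}^{(2d)/(d-1)}$-norm via the Frank--Sabin / Frank--Lewin--Lieb--Seiringer bounds) must be controlled by $\|V\|_{L^2_t H^{-\alpha_0}_x}$.

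The main steps I would carry out are: (1) Reduce by duality to estimating the operator
\[
T_V := \langle\nabla\rangle^{-\alpha_1}\Big(\int_{\mathbb{R}} e^{-it\Delta}\, m_V(t,x)\, e^{it\Delta}\,dt\Big)\langle\nabla\rangle^{-\alpha_2},
\]
where $m_V(t,x)$ denotes multiplication by $(|\nabla|^{1/2}V)(t,\cdot)$ (up to harmless conjugations/signs), and show $\|T_V\|_{\mathfrak{S}^2}\lesssim \|V\|_{L^2_tH^{-\alpha_0}_x}$. (2) Handle the symmetric diagonal case $\alpha_1=\alpha_2$: here $\alpha_0$ is determined by \eqref{non-relativistic beta}, and this is essentially the estimate already established in \cite{CHP1}; equivalently it follows from the dual Strichartz estimate for orthonormal systems of Frank--Lewin--Lieb--Seiringer \cite{FLLS2} combined with Sobolev embedding to place the derivative loss $|\nabla|^{1/2}$ and the gain $\langle\nabla\rangle^{-\alpha_0}$ appropriately. (3) Pass from the diagonal to the off-diagonal case. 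When $\max\{\alpha_1,\alpha_2\} > \tfrac{d-1}{2}$, say $\alpha_2\ge\alpha_1$, one wants $\alpha_0 = \alpha_1 = \min$, and the extra regularity $\langle\nabla\rangle^{\alpha_2-\alpha_1}$ on the right side is simply discarded after noting $\langle\nabla\rangle^{-\alpha_2}$ is a bounded operator that can be absorbed — the point being that one only ever needs $\min\{\alpha_1,\alpha_2\}$ derivatives of ``smoothing'' on $\gamma_0$ once $\alpha_1+\alpha_2$ exceeds the critical threshold. When $\max\{\alpha_1,\alpha_2\}\le \tfrac{d-1}{2}$, interpolate the relation $\alpha_0=\alpha_1+\alpha_2-\tfrac{d-1}{2}$: write $T_V$ as a complex-interpolation family between two diagonal endpoints $(\alpha_1',\alpha_1')$ and $(\alpha_2',\alpha_2')$ with the corresponding $\alpha_0'$ values from \eqref{non-relativistic beta}, using Stein interpolation on the analytic family $z\mapsto \langle\nabla\rangle^{-(\alpha_1 + z\delta)}(\cdots)\langle\nabla\rangle^{-(\alpha_2 - z\delta)}$ acting on $\mathfrak{S}^2$ (which interpolates cleanly since $\mathfrak{S}^2$ is a Hilbert space and the weights commute with $e^{it\Delta}$). (4) The borderline case $\max\{\alpha_1,\alpha_2\}=\tfrac{d-1}{2}$ with strict inequality $\alpha_0<\min$ is then obtained by the same interpolation with an $\varepsilon$ of room, or directly from the $\max>\tfrac{d-1}{2}$ case after trivially lowering one index.

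The hard part will be Step (2)/(3): making the asymmetric $\langle\nabla\rangle^{\alpha_1}\otimes\langle\nabla\rangle^{\alpha_2}$ weights interact correctly with the Frank--Lewin--Lieb--Seiringer duality, since that estimate is naturally stated with a single Schatten exponent and symmetric structure. The cleanest route is probably to avoid fractional weights on both sides at once and instead prove the estimate first for $V$ frequency-localized to dyadic shells $|\xi|\sim N$, track the $N$-dependence of the constant (gaining $N^{-\alpha_0}$ and losing $N^{1/2}$ from $|\nabla|^{1/2}$, against the fixed angular/Strichartz gain), and then sum in $N$ — this converts the problem into a clean scaling bookkeeping that transparently produces the three regimes in \eqref{non-relativistic beta} from the single constraint $\alpha_1+\alpha_2>\tfrac{d-1}{2}$, with the boundary case $\max=\tfrac{d-1}{2}$ being exactly where the dyadic sum becomes logarithmically divergent and forces the strict inequality $\alpha_0<\min$. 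I would also need to verify that the admissibility range of $(q,r)$ (in particular the endpoint $q=2$, legitimate here since $d\ge 2$) is respected throughout, and that the $L^rL^2$ versus $L^r_{x'}L^2_x$ symmetrization in \eqref{eq-StrichNorm-def-1} — not needed for the density estimate itself but relevant for its corollaries — does not create an obstruction.
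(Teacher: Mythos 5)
Your Step (1) duality reduction is sound and matches the spirit of the paper's Lemma \ref{duality lemma} and Corollary \ref{Bound on IV}, but the heart of the theorem --- the genuinely asymmetric case $\alpha_1\neq\alpha_2$ --- is not reached by Steps (2)--(4), and this is a real gap rather than a technicality. First, Stein interpolation along the family $z\mapsto \la\nabla\ra^{-(\alpha_1+z\delta)}(\cdots)\la\nabla\ra^{-(\alpha_2-z\delta)}$ ``between two diagonal endpoints $(\alpha_1',\alpha_1')$ and $(\alpha_2',\alpha_2')$'' cannot produce an off-diagonal pair: every point on the segment joining $(a,a)$ to $(b,b)$ is again of the form $(c,c)$, so a one-parameter analytic family whose two endpoints are both diagonal stays diagonal, and the target $(\alpha_1,\alpha_2)$ with $\alpha_1\neq\alpha_2$ is never an intermediate point. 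To interpolate to $(\alpha_1,\alpha_2)$ along the line of constant $\alpha_1+\alpha_2$ you would need an estimate at a point at least as asymmetric as the target (e.g.\ $(0,\alpha_1+\alpha_2)$), which is exactly what has to be proved --- the argument is circular. Second, in the regime $\max\{\alpha_1,\alpha_2\}>\tfrac{d-1}{2}$, ``discarding'' the excess regularity reduces the claim to the diagonal estimate at $(\min,\min)$ with $\alpha_0=\min$; but when $2\min\{\alpha_1,\alpha_2\}\leq\tfrac{d-1}{2}$ (e.g.\ $\alpha_1$ small and $\alpha_2$ large, which \eqref{non-relativistic alpha} allows) that diagonal estimate violates the necessary condition $\alpha_1+\alpha_2>\tfrac{d-1}{2}$ and is false by the paper's own optimality result (Theorem \ref{optimality}). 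So the asymmetric gain --- precisely what this theorem adds over \cite{CHP1} and \cite{FLLS2} --- is not obtained. Your dyadic fallback could in principle be developed, but as described it localizes only the frequency $\xi$ of $V$, whereas the three regimes of \eqref{non-relativistic beta} are governed by the relative sizes of the two kernel frequencies $\eta$ and $\xi-\eta$ against $\tfrac{d-1}{2}$; the bookkeeping you sketch does not yet see the $\max/\min$ distinction.

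For comparison, the paper's proof is direct and elementary: taking the space-time Fourier transform of $\rho_{e^{it\Delta}\gamma_0e^{-it\Delta}}$ and applying Cauchy--Schwarz with the weight $\la\eta\ra^{\alpha_1}\la\xi-\eta\ra^{\alpha_2}$ reduces \eqref{non-relativistic Strichartz} (Lemma \ref{reduction lemma}) to a uniform-in-$(\tau,\xi)$ bound on an explicit integral carrying $\delta(\tau+|\eta|^2-|\xi-\eta|^2)$; after a rotation the delta function fixes $\eta_1$, and the remaining $(d-1)$-dimensional integral is estimated by splitting into $|\eta'|\leq|\xi|$ and $|\eta'|\geq|\xi|$. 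The asymmetry is handled at the Cauchy--Schwarz step itself --- the worst configuration places the larger exponent on the smaller frequency, which is exactly why only $\min\{\alpha_1,\alpha_2\}$ survives when $\max\{\alpha_1,\alpha_2\}>\tfrac{d-1}{2}$ --- and the three cases of \eqref{non-relativistic beta} drop out of the computation. If you wish to keep an operator-theoretic route, you would need to prove the extreme asymmetric endpoint directly, or carry out a genuine double dyadic decomposition in $\eta$ and $\xi-\eta$, rather than interpolating from diagonal estimates.
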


\begin{corollary}\label{Bound on IV} Suppose that $\alpha_0$, $\alpha_1$ and $\alpha_2$ satisfy the assumptions in Theorem \ref{Strichartz estimates for density functions in the non-relativistic case}. Then, 
\begin{equation}\label{eq: Bound on IV}
\Big\|\la\nabla\ra^{-\alpha_1}\int_{\mathbb{R}}e^{-it\Delta}V(t)e^{it\Delta}dt \la\nabla\ra^{-\alpha_2}\Big\|_{\mathfrak{S}^2}\leq c \|V(t)\|_{L_t^2L_x^{\frac{2d}{d+1}}}.
\end{equation}
\end{corollary}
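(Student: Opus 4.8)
The plan is to deduce Corollary~\ref{Bound on IV} from Theorem~\ref{Strichartz estimates for density functions in the non-relativistic case} by the standard duality ($TT^*$-type) argument in the Hilbert--Schmidt class. Write $T:=\int_{\mathbb{R}}e^{-it\Delta}V(t)e^{it\Delta}\,dt$. Since $\mathfrak{S}^2$ is a Hilbert space, self-dual under $\la A,B\ra=\mathrm{Tr}(AB^*)$, one has
\[
\Big\|\la\nabla\ra^{-\alpha_1}T\la\nabla\ra^{-\alpha_2}\Big\|_{\mathfrak{S}^2}=\sup\Big\{\,\big|\mathrm{Tr}\big(\la\nabla\ra^{-\alpha_1}T\la\nabla\ra^{-\alpha_2}K\big)\big|\ :\ K\in\mathfrak{S}^2,\ \|K\|_{\mathfrak{S}^2}\le1\,\Big\}.
\]
By the cyclicity of the trace, $\mathrm{Tr}(\la\nabla\ra^{-\alpha_1}T\la\nabla\ra^{-\alpha_2}K)=\mathrm{Tr}(T\gamma_0)$ with $\gamma_0:=\la\nabla\ra^{-\alpha_2}K\la\nabla\ra^{-\alpha_1}$; note $\la\nabla\ra^{\alpha_2}\gamma_0\la\nabla\ra^{\alpha_1}=K$, so $\|\la\nabla\ra^{\alpha_2}\gamma_0\la\nabla\ra^{\alpha_1}\|_{\mathfrak{S}^2}\le1$. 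It therefore suffices to show $|\mathrm{Tr}(T\gamma_0)|\lesssim\|V\|_{L_t^2L_x^{2d/(d+1)}}$ uniformly over such $\gamma_0$.

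To estimate $\mathrm{Tr}(T\gamma_0)$, I would rewrite it as a space-time pairing against a density function. By Fubini and cyclicity,
\[
\mathrm{Tr}(T\gamma_0)=\int_{\mathbb{R}}\mathrm{Tr}\big(V(t)\,e^{it\Delta}\gamma_0 e^{-it\Delta}\big)\,dt=\int_{\mathbb{R}}\int_{\mathbb{R}^d}V(t,x)\,\rho_{e^{it\Delta}\gamma_0 e^{-it\Delta}}(t,x)\,dx\,dt,
\]
using that the trace of the multiplication operator by $V(t)$ composed with a sufficiently regular operator equals the integral of $V(t,\cdot)$ against the diagonal of that operator's kernel. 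H\"older's inequality in $x$ with the conjugate pair $\big(\tfrac{2d}{d+1},\tfrac{2d}{d-1}\big)$, followed by Cauchy--Schwarz in $t$, gives
\[
|\mathrm{Tr}(T\gamma_0)|\le\|V\|_{L_t^2L_x^{2d/(d+1)}}\,\big\|\rho_{e^{it\Delta}\gamma_0 e^{-it\Delta}}\big\|_{L_t^2L_x^{2d/(d-1)}}.
\]

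The remaining density factor is exactly what Theorem~\ref{Strichartz estimates for density functions in the non-relativistic case} controls, after one Sobolev embedding. The homogeneous Sobolev inequality $\|f\|_{L_x^{2d/(d-1)}}\lesssim\||\nabla|^{1/2}f\|_{L_x^2}$ (valid for $d\ge2$) applied at each $t$, together with $\||\nabla|^{1/2}f\|_{L_x^2}\le\||\nabla|^{1/2}f\|_{H_x^{\alpha_0}}$ (since $\alpha_0\ge0$), yields after squaring and integrating in $t$
\[
\big\|\rho_{e^{it\Delta}\gamma_0 e^{-it\Delta}}\big\|_{L_t^2L_x^{2d/(d-1)}}\lesssim\big\||\nabla|^{1/2}\rho_{e^{it\Delta}\gamma_0 e^{-it\Delta}}\big\|_{L_t^2H_x^{\alpha_0}}.
\]
Applying Theorem~\ref{Strichartz estimates for density functions in the non-relativistic case} with the roles of $\alpha_1$ and $\alpha_2$ interchanged---which is permitted because the hypotheses \eqref{non-relativistic alpha}--\eqref{non-relativistic beta} are symmetric under $\alpha_1\leftrightarrow\alpha_2$---bounds the right-hand side by a constant times $\|\la\nabla\ra^{\alpha_2}\gamma_0\la\nabla\ra^{\alpha_1}\|_{\mathfrak{S}^2}=\|K\|_{\mathfrak{S}^2}\le1$. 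Chaining the three displays and taking the supremum over $K\in\mathfrak{S}^2$ with $\|K\|_{\mathfrak{S}^2}\le1$ proves \eqref{eq: Bound on IV}.

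I do not expect a genuine obstacle here: once Theorem~\ref{Strichartz estimates for density functions in the non-relativistic case} is available, the corollary is essentially its dual formulation. The two points that require care are (i) matching the asymmetric weights $\la\nabla\ra^{-\alpha_1}$, $\la\nabla\ra^{-\alpha_2}$ generated by the duality with the asymmetric Schatten norm on the right side of the theorem---this is precisely where the $\alpha_1\leftrightarrow\alpha_2$ invariance of the hypotheses is used---and (ii) justifying the trace manipulations and the density pairing, which one does first for $V$ Schwartz in space-time and $\gamma_0$ with smooth kernel, where everything is finite and the identities are elementary, and then passes to the general case by density and the finiteness of all the norms appearing.
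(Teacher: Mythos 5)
Your argument is correct and is essentially the paper's own proof: both pair the operator against a Hilbert--Schmidt test operator, use cyclicity of the trace to shift the $\la\nabla\ra^{-\alpha_1},\la\nabla\ra^{-\alpha_2}$ weights onto it, rewrite the trace as the space-time pairing of $V$ with $\rho_{e^{it\Delta}\gamma_0e^{-it\Delta}}$, and conclude by H\"older together with the dual estimate \eqref{asymmetric Strichartz estimates for density}, i.e.\ Theorem \ref{Strichartz estimates for density functions in the non-relativistic case} combined with the Sobolev embedding $\dot H^{1/2}\hookrightarrow L^{2d/(d-1)}$. Your extra remarks (the $\alpha_1\leftrightarrow\alpha_2$ symmetry of the hypotheses and the approximation step for smooth data) only make explicit what the paper leaves implicit.
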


\begin{proof}[Proof of Corollary \ref{Bound on IV}, assuming Theorem \ref{Strichartz estimates for density functions in the non-relativistic case}]
For a compactly supported smooth function $V(t,x)$ and a finite rank smooth operator $\gamma_0$, we write
\begin{equation}
\begin{aligned}
&\textup{Tr}\Big(\la\nabla\ra^{-\alpha_1}\int_{\mathbb{R}}e^{-it\Delta}V(t)e^{it\Delta}dt \la\nabla\ra^{-\alpha_2}\Big)\gamma_0\\
&=\int_{\mathbb{R}}\textup{Tr}\Big(e^{it\Delta}\la\nabla\ra^{-\alpha_2}\gamma_0\la\nabla\ra^{-\alpha_1} e^{-it\Delta}V(t)\Big)dt\\
&=\int_{\mathbb{R}}\int_{\mathbb{R}^d} \rho_{e^{it\Delta}\la\nabla\ra^{-\alpha_2}\gamma_0\la\nabla\ra^{-\alpha_1} e^{-it\Delta}}(x) V(t,x) dxdt,
\end{aligned}
\end{equation}
where the first identity is from cyclicity of trace. Therefore, \eqref{eq: Bound on IV} is dual to 
\begin{equation}\label{asymmetric Strichartz estimates for density}
\|\rho_{e^{it\Delta}\gamma_0 e^{-it\Delta}}\|_{L_{t}^2L_x^{\frac{2d}{d-1}}}\leq c\|\la\nabla\ra^{\alpha_2}\gamma_0\la\nabla\ra^{\alpha_1}\|_{\mathfrak{S}^2},
\end{equation}
which follows from \eqref{non-relativistic Strichartz} and the Sobolev inequality.
\end{proof}

The main strategy to prove the Strichartz estimate for density functions is to reformulate it as an integral estimate through the space-time Fourier transformation. This approach, via bilinear estimates based on the space-time $L^2$-norm, has been introduced by Klainerman and Machedon \cite{KM1,KM2}, and subsequently developed by many authors.

\begin{lemma}[Reduction to an integral estimate]\label{reduction lemma}
Let $\tilde{\alpha}$ be any real number. Then if 
 the integral 
\begin{equation}\label{integral estimate}
I_{\tau,\xi}:=\int_{|\eta|\leq|\xi-\eta|}\frac{|\xi|^{2\tilde{\alpha}}\la\xi\ra^{2\alpha_0}}{\la\eta\ra^{2\max\{\alpha_1,\alpha_2\}}\la\xi-\eta\ra^{2\min\{\alpha_1,\alpha_2\}}}\delta(\tau+|\eta|^2-|\xi-\eta|^2) d\eta
\end{equation}
is bounded uniformly in $\tau$ and $\xi$, 
the Strichartz estimate
\begin{equation}\label{Strichartz estimate: reduction lemma}
\||\nabla|^{\tilde{\alpha}}\rho_{e^{it\Delta}\gamma_0 e^{-it\Delta}}\|_{L_{t\in\mathbb{R}}^2H_x^{\alpha_0}}\lesssim \|\la\nabla\ra^{\alpha_1}\gamma_0\la\nabla\ra^{\alpha_2}\|_{\mathfrak{S}^2}
\end{equation}
holds.
\end{lemma}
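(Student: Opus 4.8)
The plan is to pass to the space-time Fourier transform and reduce the Strichartz estimate to a pointwise bound on the integral $I_{\tau,\xi}$, following the Klainerman–Machedon $TT^*$/bilinear philosophy. First I would write the kernel of $\gamma_0$ in terms of its (spatial) Fourier transform: if $\widehat{\gamma_0}(p,q)$ denotes the kernel on the Fourier side, then the kernel of $e^{it\Delta}\gamma_0 e^{-it\Delta}$ has Fourier representation with the phase $e^{-it(|p|^2-|q|^2)}$, so the density $\rho_{e^{it\Delta}\gamma_0 e^{-it\Delta}}(t,x)=(e^{it\Delta}\gamma_0 e^{-it\Delta})(x,x)$ becomes, after setting $\xi=p-q$ and integrating out one variable,
\begin{equation}
\rho_{e^{it\Delta}\gamma_0 e^{-it\Delta}}(t,x)=\frac{1}{(2\pi)^d}\int_{\R^d}\int_{\R^d} e^{ix\cdot\xi}\,e^{-it(|p|^2-|p-\xi|^2)}\,\widehat{\gamma_0}(p,p-\xi)\,dp\,d\xi.
\end{equation}
Taking the Fourier transform in $t$ as well produces a $\delta$-distribution $\delta(\tau-|p|^2+|p-\xi|^2)$, so the space-time Fourier transform of $|\nabla|^{\tilde\alpha}\la\nabla\ra^{\alpha_0}\rho$ at $(\tau,\xi)$ is $|\xi|^{\tilde\alpha}\la\xi\ra^{\alpha_0}$ times an integral of $\widehat{\gamma_0}(p,p-\xi)$ against that $\delta$.

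Next I would estimate the $L^2_{t,x}$ norm of this quantity by Plancherel and Cauchy–Schwarz on the $\delta$-measure. Writing $\eta=\xi-p$ (or $p$, depending on which of $\alpha_1,\alpha_2$ is larger — this is where the restriction $|\eta|\le|\xi-\eta|$ enters, by symmetrizing over the two regions $|p|\le|p-\xi|$ and $|p|>|p-\xi|$ and using that on each region one of $\la p\ra,\la p-\xi\ra$ dominates), I insert $1 = \frac{\la\eta\ra^{\alpha}\la\xi-\eta\ra^{\alpha'}}{\la\eta\ra^{\alpha}\la\xi-\eta\ra^{\alpha'}}$ with $\{\alpha,\alpha'\}=\{\alpha_1,\alpha_2\}$ arranged so that the larger exponent sits on the smaller-modulus variable. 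Cauchy–Schwarz in the $\delta(\tau+|\eta|^2-|\xi-\eta|^2)\,d\eta$ measure then splits off exactly the factor $I_{\tau,\xi}$ as the "bad" half, while the other half integrates (after integrating in $\tau$ and $\xi$, undoing the $\delta$ and changing variables back to $(p,q)$) to $\|\la\nabla\ra^{\alpha_1}\gamma_0\la\nabla\ra^{\alpha_2}\|_{\mathfrak S^2}^2$. Hence a uniform bound $\sup_{\tau,\xi}I_{\tau,\xi}<\infty$ yields \eqref{Strichartz estimate: reduction lemma}. A standard density argument (compactly supported smooth $V$, finite-rank smooth $\gamma_0$) justifies all manipulations.

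The main obstacle I anticipate is purely bookkeeping rather than conceptual: one must track carefully the asymmetry between $\alpha_1$ and $\alpha_2$ so that the two regions $|p|\le|p-\xi|$ and its complement each reproduce the \emph{same} integrand $I_{\tau,\xi}$ with $\max$ and $\min$ in the right slots, and one must make sure that after the Cauchy–Schwarz split the "good" factor really does reassemble into the Hilbert–Schmidt norm $\|\la\nabla\ra^{\alpha_1}\gamma_0\la\nabla\ra^{\alpha_2}\|_{\mathfrak S^2}$ and not some permuted version. The actual analytic work — showing $I_{\tau,\xi}$ is finite under the hypotheses \eqref{non-relativistic alpha}–\eqref{non-relativistic beta} — is deferred; here it only needs to be isolated. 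I expect the computation of $I_{\tau,\xi}$ to be carried out in the proof of Theorem \ref{Strichartz estimates for density functions in the non-relativistic case} itself, using the coarea formula to evaluate the $\delta$-integral over the paraboloid and then the three cases in \eqref{non-relativistic beta} according to whether the decay is governed by one derivative factor, both, or is borderline logarithmic.
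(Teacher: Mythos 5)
Your proposal follows essentially the same route as the paper's proof: space--time Fourier transform of the density giving a $\delta(\tau+|\eta|^2-|\xi-\eta|^2)$ on the paraboloid, Plancherel plus Cauchy--Schwarz against the $\delta\,d\eta$ measure to split off $\sup_{\tau,\xi}I_{\tau,\xi}$ from the factor that reassembles into $\|\la\nabla\ra^{\alpha_1}\gamma_0\la\nabla\ra^{\alpha_2}\|_{\mathfrak{S}^2}^2$, with the $\max/\min$ bookkeeping handled by splitting into $|\eta|\le|\xi-\eta|$ and its complement (the paper does this after Cauchy--Schwarz via the change of variables $\xi-\eta\mapsto\eta$, bounding the result by $I_{\tau,\xi}+I_{-\tau,\xi}$, rather than inserting the rearranged weights beforehand, but this is the same argument). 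The evaluation of $I_{\tau,\xi}$ is indeed deferred to the proof of Theorem \ref{Strichartz estimates for density functions in the non-relativistic case}, exactly as you anticipate.
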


\begin{proof}
The Fourier transform of the density function of $\gamma$ is given by
\begin{equation}\label{Fourier transform of density}
\begin{aligned}
\widehat{\rho_\gamma}(\xi)&=\mathcal{F}_x\Big\{\frac{1}{(2\pi)^{2d}}\int_{\mathbb{R}^d}\int_{\mathbb{R}^d}\hat{\gamma}(\eta,\zeta) e^{ix\cdot(\eta+\zeta)}dx d\zeta\Big\}(\xi)\\
&=\frac{1}{(2\pi)^{2d}}\int_{\mathbb{R}^d}\int_{\mathbb{R}^d}\hat{\gamma}(\eta,\zeta) \mathcal{F}_x\Big\{e^{ix\cdot(\eta+\zeta)}\Big\}(\xi)d\eta d\zeta\\
&=\frac{1}{(2\pi)^{2d}}\int_{\mathbb{R}^d}\int_{\mathbb{R}^d}\hat{\gamma}(\eta,\zeta)\cdot(2\pi)^d\delta(\xi-\eta-\zeta)d\eta d\zeta\\
&=\frac{1}{(2\pi)^{d}}\int_{\mathbb{R}^d}\hat{\gamma}(\eta,\xi-\eta)d\eta.
\end{aligned}
\end{equation}
Hence, the space-time Fourier transform of the density function $\rho_{e^{it\Delta}\gamma e^{-it\Delta}}$ is
\begin{equation}\label{space-time Fourier transform of density}
\begin{aligned}
(\rho_{e^{it\Delta}\gamma_0 e^{-it\Delta}})^\sim(\tau,\xi)&=\frac{1}{(2\pi)^{d}}\int_{\mathbb{R}^d}\mathcal{F}_t\Big\{e^{-it(|\eta|^2-|\xi-\eta|^2)}\Big\}\hat{\gamma}_0(\eta,\xi-\eta)d\eta\\
&=\frac{1}{(2\pi)^{d-1}}\int_{\mathbb{R}^d}\delta(\tau+|\eta|^2-|\xi-\eta|^2)\hat{\gamma}_0(\eta,\xi-\eta)d\eta.
\end{aligned}
\end{equation}
Thus, by the Plancherel theorem and Cauchy-Schwarz, we get
\begin{equation}
\begin{aligned}
&\||\nabla|^{\tilde{\alpha}}\rho_{e^{it\Delta}\gamma_0 e^{-it\Delta}}\|_{L_{t\in\mathbb{R}}^2H_x^{\alpha_0}}^2\\
&=\frac{1}{(2\pi)^{2(d+1)}}\Big\||\xi|^{\tilde{\alpha}}\la\xi\ra^{\alpha_0}(\rho_{e^{it\Delta}\gamma_0 e^{-it\Delta}})^\sim(\tau,\xi)\Big\|_{L_{\tau\in\mathbb{R}}^2L_{\xi}^2}^2\\
&=\frac{1}{(2\pi)^{2(d+1)}}\int_{\mathbb{R}}\int_{\mathbb{R}^d} |\xi|^{2\tilde{\alpha}}\la\xi\ra^{2\alpha_0}\\
&\quad\quad\quad\quad\quad\quad\quad\cdot\Big|\frac{1}{(2\pi)^{d-1}}\int_{\mathbb{R}^d}\delta(\tau+|\eta|^2-|\xi-\eta|^2)\hat{\gamma}_0(\eta,\xi-\eta)d\eta\Big|^2d\xi d\tau\\
&\leq\frac{1}{(2\pi)^{4d}}\int_{\mathbb{R}}\int_{\mathbb{R}^d} |\xi|^{2\tilde{\alpha}}\la\xi\ra^{2\alpha_0} \Big\{\int_{\mathbb{R}^d}\frac{\delta(\tau+|\eta|^2-|\xi-\eta|^2)}{\la\eta\ra^{2\alpha_1}\la\xi-\eta\ra^{2\alpha_2}}d\eta\Big\}\\
&\quad\quad\quad\quad\quad\cdot \Big\{\int_{\mathbb{R}^d}\delta(\tau+|\eta|^2-|\xi-\eta|^2)|(\la\nabla\ra^{\alpha_1}\gamma_0\la\nabla\ra^{\alpha_2})^\wedge(\eta,\xi-\eta)|^2d\eta\Big\}d\xi d\tau\\
&\leq \sup_{\tau,\xi}\frac{1}{(2\pi)^{4d}}\Big\{\int_{\mathbb{R}^d}\frac{|\xi|^{2\tilde{\alpha}}\la\xi\ra^{2\alpha_0}\delta(\tau+|\eta|^2-|\xi-\eta|^2)}{\la\eta\ra^{2\alpha_1}\la\xi-\eta\ra^{2\alpha_2}}d\eta\Big\}\\
&\quad\quad\quad\cdot \int_{\mathbb{R}}\int_{\mathbb{R}^d}\int_{\mathbb{R}^d}\delta(\tau+|\eta|^2-|\xi-\eta|^2)|(\la\nabla\ra^{\alpha_1}\gamma_0\la\nabla\ra^{\alpha_2})^\wedge(\eta,\xi-\eta)|^2d\eta d\xi d\tau.
\end{aligned}
\end{equation}
Then, integrating out the delta function with respect to $\tau$ and using the Plancherel theorem again,
\begin{equation}
\begin{aligned}
&\||\nabla|^{\tilde{\alpha}}\rho_{e^{it\Delta}\gamma_0 e^{-it\Delta}}\|_{L_{t\in\mathbb{R}}^2H_x^{\alpha_0}}^2\\
&\leq\sup_{\tau,\xi}\Big\{\int_{\mathbb{R}^d}\frac{|\xi|^{2\tilde{\alpha}}\la\xi\ra^{2\alpha_0}\delta(\tau+|\eta|^2-|\xi-\eta|^2)}{\la\eta\ra^{2\alpha_1}\la\xi-\eta\ra^{2\alpha_2}}d\eta\Big\} \frac{1}{(2\pi)^{2d}}\|\la\nabla\ra^{\alpha_1}\gamma_0\la\nabla\ra^{\alpha_2}\|_{\mathfrak{S}^2}^2.
\end{aligned}
\end{equation}
Therefore, it suffices to show that $\sup_{\tau,\xi}\{\cdots\}$ is bounded.

We decompose 
\begin{equation}
\begin{aligned}
&\int_{\mathbb{R}^d}\frac{|\xi|^{2\tilde{\alpha}}\la\xi\ra^{2\alpha_0}\delta(\tau+|\eta|^2-|\xi-\eta|^2)}{\la\eta\ra^{2\alpha_1}\la\xi-\eta\ra^{2\alpha_2}}d\eta\\
&=\int_{|\eta|\leq|\xi-\eta|}+\int_{|\eta|\geq|\xi-\eta|}\frac{|\xi|^{2\tilde{\alpha}}\la\xi\ra^{2\alpha_0}\delta(\tau+|\eta|^2-|\xi-\eta|^2)}{\la\eta\ra^{2\alpha_1}\la\xi-\eta\ra^{2\alpha_2}}d\eta.
\end{aligned}
\end{equation}
By change of the variable $(\xi-\eta)\mapsto\eta$, the second integral becomes
\begin{equation}
\begin{aligned}
&\int_{|\eta|\geq|\xi-\eta|}\frac{|\xi|^{2\tilde{\alpha}}\la\xi\ra^{2\alpha_0}\delta(\tau+|\eta|^2-|\xi-\eta|^2)}{\la\eta\ra^{2\alpha_1}\la\xi-\eta\ra^{2\alpha_2}}d\eta\\
&=\int_{|\eta|\leq|\xi-\eta|}\frac{|\xi|^{2\tilde{\alpha}}\la\xi\ra^{2\alpha_0}\delta(\tau+|\xi-\eta|^2-|\eta|^2)}{\la\eta\ra^{2\alpha_2}\la\xi-\eta\ra^{2\alpha_1}}d\eta\\
&=\int_{|\eta|\leq|\xi-\eta|}\frac{|\xi|^{2\tilde{\alpha}}\la\xi\ra^{2\alpha_0}\delta(-\tau+|\eta|^2-|\xi-\eta|^2)}{\la\eta\ra^{2\alpha_2}\la\xi-\eta\ra^{2\alpha_1}}d\eta.
\end{aligned}
\end{equation}
Thus, by the assumption \eqref{integral estimate}, we prove the desired uniform bound,
\begin{equation}
\int_{\mathbb{R}^d}\frac{|\xi|^{2\tilde{\alpha}}\la\xi\ra^{2\alpha_0}\delta(\tau+|\eta|^2-|\xi-\eta|^2)}{\la\eta\ra^{2\alpha_1}\la\xi-\eta\ra^{2\alpha_2}}d\eta\leq I_{\tau,\xi}+I_{-\tau,\xi}\leq 2\sup_{\tau,\xi}I_{\tau,\xi}<\infty.
\end{equation}
\end{proof}

\begin{proof}[Proof of Theorem \ref{Strichartz estimates for density functions in the non-relativistic case}]

By Lemma \ref{reduction lemma}, the proof of Theorem \ref{Strichartz estimates for density functions in the non-relativistic case} can be reduced to the proof of a uniform bound on the integral
\begin{equation}\label{Strichartz integral}
\begin{aligned}
I_{\tau,\xi}&=\int_{\{|\eta|\leq|\xi-\eta|\}}\frac{|\xi|\la\xi\ra^{2\alpha_0}\delta(\tau+|\eta|^2-|\xi-\eta|^2)}{\la\eta\ra^{2\max\{\alpha_1,\alpha_2\}}\la\xi-\eta\ra^{2\min\{\alpha_1,\alpha_2\}}} d\eta\\
&=\int_{\{|\eta|\leq|\xi-\eta|\}}\frac{|\xi|\la\xi\ra^{2\alpha_0}\delta(\tau-|\xi|^2+2\xi\cdot\eta)}{\la\eta\ra^{2\max\{\alpha_1,\alpha_2\}}\la\xi-\eta\ra^{2\min\{\alpha_1,\alpha_2\}}} d\eta.
\end{aligned}
\end{equation}
Here, we may assume that $\tau\geq 0$, since if $\tau<0$, then $\tau+|\eta|^2-|\xi-\eta|^2<0$ in the integral domain, so the delta function in \eqref{Strichartz integral} is zero. 

When $d=1$, using the trivial inequality
\begin{equation}\label{trivial inequality}
|\xi|\leq|\eta|+|\xi-\eta|\leq 2|\xi-\eta|
\end{equation}
in the integral domain, we obtain
$$I_{\tau,\xi}\lesssim\frac{\la\xi\ra^{2\alpha_0}}{\la\xi\ra^{\min\{\alpha_1,\alpha_2\}}}\int_{|\eta|\leq|\xi-\eta|}|\xi|\delta(\tau-\xi^2+2\xi\eta) d\eta\sim1.$$
Suppose that $d\geq 2$. Given $\xi\in\mathbb{R}^d$, changing the variable $\eta$ by a rotation making $(1,0,\cdots, 0)\in\mathbb{R}_\eta^d$ parallel to $\xi$ and then integrating out the delta function, we write the integral as
\begin{equation}
\begin{aligned}
I_{\tau,\xi}&=\int_{\mathbb{R}^{d-1}}\int_{|\eta_1|\leq|\eta_1-|\xi||}\frac{|\xi|\la\xi\ra^{2\alpha_0}\delta(\tau-|\xi|^2+2|\xi|\eta_1) }{\la(\eta_1,\eta')\ra^{2\max\{\alpha_1,\alpha_2\}}\la(\eta_1-|\xi|,\eta')\ra^{2\min\{\alpha_1,\alpha_2\}}}d\eta_1 d\eta'\\
&=\frac{1}{2}\int_{\mathbb{R}^{d-1}}\frac{\la\xi\ra^{2\alpha_0}d\eta'}{\la(\eta_1^*,\eta')\ra^{\max\{\alpha_1,\alpha_2\}}\la(\eta_1^*-|\xi|,\eta')\ra^{2\min\{\alpha_1,\alpha_2\}}},
\end{aligned}
\end{equation}
where $\eta=(\eta_1,\eta')\in\mathbb{R}\times\mathbb{R}^{d-1}$ and $\eta_1^*=\tfrac{|\xi|^2-\tau}{2|\xi|}$ with $|\eta_1^*|\leq|\eta_1^*-|\xi||$. Note that by the trivial inequality as in \eqref{trivial inequality}, we have $|\eta_1^*-|\xi||\geq\frac{|\xi|}{2}$. Thus, Theorem \ref{Strichartz estimates for density functions in the non-relativistic case} follows from the uniform bound on
$$\tilde{I}_{\tau,\xi}:=\int_{\mathbb{R}^{d-1}}\frac{\la\xi\ra^{2\alpha_0}d\eta'}{\la \eta'\ra^{2\max\{\alpha_1,\alpha_2\}}\la(\frac{|\xi|}{2},\eta')\ra^{2\min\{\alpha_1,\alpha_2\}}}.$$

We decompose
\begin{equation}
\tilde{I}_{\tau,\xi}=\int_{|\eta'|\leq|\xi|}+\int_{|\eta'|\geq|\xi|}\frac{\la\xi\ra^{2\alpha_0}d\eta'}{\la \eta'\ra^{2\max\{\alpha_1,\alpha_2\}}\la(\frac{|\xi|}{2},\eta')\ra^{2\min\{\alpha_1,\alpha_2\}}}=:\tilde{I}_{\tau,\xi}^{(1)}+\tilde{I}_{\tau,\xi}^{(2)}.
\end{equation}
For the first integral, using that $\frac{|\xi|}{2}\leq|(\frac{|\xi|}{2},\eta')|\leq \frac{\sqrt{5}}{2}|\xi|$ in the integral domain, we get
\begin{equation}
\begin{aligned}
\tilde{I}_{\tau,\xi}^{(1)}&\sim\int_{|\eta'|\leq|\xi|}\frac{\la\xi\ra^{2\alpha-2\min\{\alpha_1,\alpha_2\}}d\eta'}{\la \eta'\ra^{2\max\{\alpha_1,\alpha_2\}}}\\
&\sim\left\{\begin{aligned}
&\la\xi\ra^{2\alpha_0-2(\alpha_1+\alpha_2)+d-1}&&\textup{if }0\leq\max\{\alpha_1,\alpha_2\}<\tfrac{d-1}{2},\\
&\la\xi\ra^{2\alpha_0-2\min\{\alpha_1,\alpha_2\}}\ln\la\xi\ra&&\textup{if }\max\{\alpha_1,\alpha_2\}=\tfrac{d-1}{2},\\
&\la\xi\ra^{2\alpha_0-2\min\{\alpha_1,\alpha_2\}}&&\textup{if }\max\{\alpha_1,\alpha_2\}>\tfrac{d-1}{2}.
\end{aligned}
\right.
\end{aligned}
\end{equation}
The second integral $\tilde{I}_{\tau,\xi}^{(2)}$ is bounded by
\begin{equation}
\int_{|\eta'|\geq|\xi|}\frac{\la\xi\ra^{2\alpha_0}d\eta'}{\la \eta'\ra^{2\max\{\alpha_1,\alpha_2\}+2\min\{\alpha_1,\alpha_2\}}}=\int_{|\eta'|\geq|\xi|}\frac{\la\xi\ra^{2\alpha_0}d\eta'}{\la \eta'\ra^{2(\alpha_1+\alpha_2)}}\lesssim\la\xi\ra^{2\alpha_0-2(\alpha_1+\alpha_2)+(d-1)},
\end{equation}
since $2(\alpha_1+\alpha_2)>d-1$. Both $\tilde{I}_{\tau,\xi}^{(1)}$ and $\tilde{I}_{\tau,\xi}^{(2)}$ are uniformly bounded due to the assumption \eqref{non-relativistic beta}.
\end{proof}

Next, we prove optimality of the Strichartz estimate \eqref{non-relativistic Strichartz}.
\begin{theorem}[Optimality of Theorem \ref{Strichartz estimates for density functions in the non-relativistic case}]\label{optimality}
The assumptions in Theorem \ref{Strichartz estimates for density functions in the non-relativistic case} are necessary.
\end{theorem}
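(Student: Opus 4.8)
The plan is to show that each of the three ingredients in the hypotheses of Theorem \ref{Strichartz estimates for density functions in the non-relativistic case} — the range condition $\alpha_1+\alpha_2>\tfrac{d-1}{2}$ in \eqref{non-relativistic alpha}, and each of the three clauses fixing $\alpha_0$ in \eqref{non-relativistic beta} — cannot be relaxed, by exhibiting for each a family of test operators $\gamma_0$ for which the left-hand side of \eqref{non-relativistic Strichartz} blows up while the right-hand side stays bounded (or, more precisely, the ratio of the two sides diverges). Since the proof of Theorem \ref{Strichartz estimates for density functions in the non-relativistic case} reduces via Lemma \ref{reduction lemma} to the uniform boundedness in $\tau,\xi$ of the integral $I_{\tau,\xi}$, and the Cauchy–Schwarz step in that proof is saturated by an appropriate choice of $\hat\gamma_0$, I would first observe that it suffices to show that when the stated conditions on $\alpha_0,\alpha_1,\alpha_2$ fail, the integral $\tilde I_{\tau,\xi}$ (equivalently $I_{\tau,\xi}$) is \emph{not} uniformly bounded, and then promote that to a genuine counterexample to \eqref{non-relativistic Strichartz}.

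Concretely, I would build the test operator by frequency localization: fix a large dyadic scale $N$ and a point $\xi$ with $|\xi|\sim N$, take $\tau$ so that $\eta_1^*\sim 0$ (equivalently $\tau\sim|\xi|^2$), and choose $\hat\gamma_0(\eta,\zeta)$ supported where $\eta+\zeta$ is near $\xi$ and where $(\eta,\zeta)$ lies on the relevant paraboloid slice, with profile proportional to the Cauchy–Schwarz extremizer $\la\eta\ra^{-2\alpha_1}\la\zeta\ra^{-2\alpha_2}$ appearing in the proof of Lemma \ref{reduction lemma}; then $\|\la\nabla\ra^{\alpha_1}\gamma_0\la\nabla\ra^{\alpha_2}\|_{\mathfrak S^2}^2$ is comparable to the "mass'' integral while $\||\nabla|^{1/2}\rho_{e^{it\Delta}\gamma_0 e^{-it\Delta}}\|_{L_t^2 H_x^{\alpha_0}}^2$ is comparable to that mass times $\sup_{\tau,\xi} I_{\tau,\xi}$, so a divergence of $\sup I_{\tau,\xi}$ as $N\to\infty$ forces \eqref{non-relativistic Strichartz} to fail. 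For the range condition \eqref{non-relativistic alpha}, taking $|\xi|\sim N$ and examining $\tilde I_{\tau,\xi}^{(2)}=\int_{|\eta'|\ge|\xi|}\la\xi\ra^{2\alpha_0}\la\eta'\ra^{-2(\alpha_1+\alpha_2)}d\eta'$ shows the $\eta'$-integral diverges when $2(\alpha_1+\alpha_2)\le d-1$, independently of how $\alpha_0$ is chosen; for the third clause ($\max\{\alpha_1,\alpha_2\}>\tfrac{d-1}{2}$, requiring $\alpha_0=\min\{\alpha_1,\alpha_2\}$) the near-origin piece $\tilde I_{\tau,\xi}^{(1)}\sim\la\xi\ra^{2\alpha_0-2\min\{\alpha_1,\alpha_2\}}$ diverges as $|\xi|\to\infty$ once $\alpha_0>\min\{\alpha_1,\alpha_2\}$, and the $\log$ in the borderline clause shows $\alpha_0=\min\{\alpha_1,\alpha_2\}$ itself is not allowed there; for the first clause, the $\tilde I^{(1)}_{\tau,\xi}\sim\la\xi\ra^{2\alpha_0-2(\alpha_1+\alpha_2)+d-1}$ (when $\max\{\alpha_1,\alpha_2\}<\tfrac{d-1}{2}$) and $\tilde I^{(2)}$ bounds together show $\alpha_0$ cannot exceed $\alpha_1+\alpha_2-\tfrac{d-1}{2}$. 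In each case one reads off from the explicit computation in the proof of Theorem \ref{Strichartz estimates for density functions in the non-relativistic case} exactly which power of $\la\xi\ra$ appears, so the failure of the matching inequality is immediate.

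The main obstacle I anticipate is not the divergence of $I_{\tau,\xi}$ — that is essentially bookkeeping with the formulas already displayed — but rather making the passage from "the integral is not uniformly bounded'' to "the Strichartz estimate genuinely fails'' fully rigorous: one must exhibit an actual Hilbert–Schmidt operator $\gamma_0$ (not just a formal kernel on a measure-zero paraboloid) for which both sides of \eqref{non-relativistic Strichartz} are finite and their ratio is arbitrarily large. I would handle this by thickening the paraboloid: replace the delta-measure by an $L^2$ density supported in an $O(N^{-1})$-neighborhood of the sheet $\{\tau+|\eta|^2-|\xi-\eta|^2=0\}$ intersected with the region driving the divergence, so that $\gamma_0$ is a bona fide Hilbert–Schmidt operator, then verify that the space-time Fourier support computation in the proof of Lemma \ref{reduction lemma} still lower-bounds the left-hand side by (mass)$\times$(the divergent integral) up to harmless constants, letting $N\to\infty$. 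An equivalent and perhaps cleaner route, which I would mention, is a scaling/duality argument: \eqref{non-relativistic Strichartz} is equivalent by duality to the operator bound in Corollary \ref{Bound on IV} (in the form \eqref{asymmetric Strichartz estimates for density} after Sobolev embedding), and testing that bound against explicit Gaussian-type $V$ concentrated at frequency $N$ and the operator $e^{-it\Delta}\gamma_0 e^{it\Delta}$ converts the necessity statement into a one-line scaling check on the exponents, recovering precisely the constraints \eqref{non-relativistic alpha}–\eqref{non-relativistic beta}.
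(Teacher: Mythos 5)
Your primary route --- building a concrete $\gamma_0$ whose kernel is supported in an $O(N^{-1})$-thickening of the paraboloid slice and whose profile saturates the Cauchy--Schwarz step of Lemma \ref{reduction lemma} --- can be made to work, but it is the laborious, primal version of what the paper does, and in your write-up the decisive step is only announced, not executed: you still have to verify that for the thickened kernel the $\eta$-integral in \eqref{space-time Fourier transform of density} has no cancellation, that the left side of \eqref{non-relativistic Strichartz} is bounded below by (mass)$\times I_{\tau,\xi}$ on a set of $(\tau,\xi)$ of positive measure, and that the $\mathfrak{S}^2$ norm on the right is comparable to the mass. The paper avoids all of this by first proving an \emph{exact} duality (Lemma \ref{duality lemma}): \eqref{non-relativistic Strichartz} is equivalent to the weighted $L^2_{\tau,\xi}$ inequality \eqref{dual Strichartz estimate: reduction lemma}, whose weight is precisely the inner $\eta'$-integral in \eqref{non-relativistic dual reformulation}. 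Necessity then reduces to testing that scalar inequality with indicator functions: the divergence of the $\eta'$-integral gives necessity of \eqref{non-relativistic alpha}, a low-frequency bump $\tilde V_n$ supported in $|\tau|\le n^{-2}$, $|\xi|\le n^{-1}$ shows the homogeneous $|\nabla|^{1/2}$ cannot be weakened (a point your proposal does not address, though it lies outside the literal ``assumptions''), and high-frequency tubes with $\xi_1\approx n$, $\tau\approx\pm n^2$ produce exactly the three asymptotics $n^{2\alpha_0-2(\alpha_1+\alpha_2)+d-1}$, $n^{2\alpha_0-2\alpha_2}\ln n$, $n^{2\alpha_0-2\alpha_2}$ you list, giving necessity of \eqref{non-relativistic beta}. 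So your exponent bookkeeping is right, but the clean mechanism that turns it into a proof is the duality lemma, not the primal thickening.

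The alternative ``cleaner route'' you suggest is not viable as stated. Corollary \ref{Bound on IV} is dual to \eqref{asymmetric Strichartz estimates for density}, which is obtained from \eqref{non-relativistic Strichartz} by Sobolev embedding and therefore no longer sees $\alpha_0$ at all: it holds for every admissible $\alpha_1,\alpha_2$ with $\alpha_1+\alpha_2>\tfrac{d-1}{2}$, regardless of how large $\alpha_0$ is taken in \eqref{non-relativistic Strichartz}. Since the weaker estimate never fails in the parameter region at issue, testing it (or its dual) can never exhibit the failure of \eqref{non-relativistic Strichartz} when \eqref{non-relativistic beta} is violated; the implication only runs from failure of the weak estimate to failure of the strong one, and the weak estimate does not fail. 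The duality you want must be taken at the level of \eqref{non-relativistic Strichartz} itself, i.e.\ Lemma \ref{duality lemma}, which retains the weights $|\xi|^{\tilde\alpha}\la\xi\ra^{\alpha_0}$; with that replacement your high- and low-frequency test functions do indeed reduce the necessity statement to the scaling check you describe.
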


The following dual formulation is useful to find the necessary conditions on the Strichartz estimate \eqref{non-relativistic Strichartz}.
\begin{lemma}[Dual inequality]\label{duality lemma}
The Strichartz estimate \eqref{Strichartz estimate: reduction lemma} holds if and only if
\begin{equation}\label{dual Strichartz estimate: reduction lemma}
\Big\|\frac{|\xi|^{\tilde{\alpha}}\la\xi\ra^{\alpha_0}\tilde{V}(-|\eta|^2+|\xi-\eta|^2,\xi)}{\la\eta\ra^{\alpha_1}\la\xi-\eta\ra^{\alpha_2}}\Big\|_{L_{\xi}^2L_\eta^2}\leq \|\tilde{V}(\tau,\xi)\|_{L_{\tau\in\mathbb{R}}^2 L_\xi^2}.
\end{equation}
\end{lemma}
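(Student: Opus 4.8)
The plan is to pass both sides of \eqref{Strichartz estimate: reduction lemma} to the space-time Fourier side and recognize the resulting inequality as the $L^2\to L^2$ boundedness of one linear operator $T$; then \eqref{dual Strichartz estimate: reduction lemma} is exactly the boundedness of the adjoint $T^*$, and the equivalence is just $\|T\|=\|T^*\|$.

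First I would set $G:=(\la\nabla\ra^{\alpha_1}\gamma_0\la\nabla\ra^{\alpha_2})^\wedge$, so that the right-hand side of \eqref{Strichartz estimate: reduction lemma} is $\|G\|_{L^2_{\eta,\zeta}}$ up to a fixed constant; as $\gamma_0$ ranges over the weighted Hilbert--Schmidt space, $G$ ranges over all of $L^2(\mathbb R^d\times\mathbb R^d)$ (take $\gamma_0=\la\nabla\ra^{-\alpha_1}(\cdot)\la\nabla\ra^{-\alpha_2}$). By the space-time Fourier transform computation \eqref{space-time Fourier transform of density}, the left-hand side of \eqref{Strichartz estimate: reduction lemma} equals, up to a fixed constant, $\|TG\|_{L^2_{\tau,\xi}}$, where
$$(TG)(\tau,\xi):=|\xi|^{\tilde\alpha}\la\xi\ra^{\alpha_0}\int_{\mathbb R^d}\delta(\tau+|\eta|^2-|\xi-\eta|^2)\,\frac{G(\eta,\xi-\eta)}{\la\eta\ra^{\alpha_1}\la\xi-\eta\ra^{\alpha_2}}\,d\eta.$$
Hence \eqref{Strichartz estimate: reduction lemma} is equivalent to $\|TG\|_{L^2}\lesssim\|G\|_{L^2}$.

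Next I would compute $T^*$: pairing $TG$ with a test function $V(\tau,\xi)$, substituting $\zeta=\xi-\eta$, and integrating out the $\tau$-delta --- exactly the manipulations used at the end of the proof of Lemma \ref{reduction lemma} --- yields $\langle TG,V\rangle_{L^2_{\tau,\xi}}=\langle G,T^*V\rangle_{L^2_{\eta,\zeta}}$ with
$$(T^*V)(\eta,\zeta)=\frac{|\eta+\zeta|^{\tilde\alpha}\la\eta+\zeta\ra^{\alpha_0}}{\la\eta\ra^{\alpha_1}\la\zeta\ra^{\alpha_2}}\,V\bigl(|\zeta|^2-|\eta|^2,\ \eta+\zeta\bigr).$$
Relabeling $\zeta=\xi-\eta$, which is a measure-preserving change of variables in the $L^2_{\eta,\zeta}$-norm, turns $\|T^*V\|_{L^2}\lesssim\|V\|_{L^2}$ into exactly \eqref{dual Strichartz estimate: reduction lemma} (the $\tilde V$ there being our $V$). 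Since the bilinear form $\langle TG,V\rangle$ is bounded on $L^2\times L^2$ if and only if each of its two individual one-sided bounds holds, \eqref{Strichartz estimate: reduction lemma} and \eqref{dual Strichartz estimate: reduction lemma} are equivalent.

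The one delicate point is the meaning of the $\delta$-distribution: $T$ is not an honest $L^2$-kernel operator and the adjoint identity is a priori formal. I would handle it precisely as in Lemma \ref{reduction lemma} --- either realize $T$ and $T^*$ as an extension/restriction pair attached to the level set $\{\tau+|\eta|^2=|\xi-\eta|^2\}$ with its natural measure, so that the adjoint relation is the standard extension/restriction duality, or replace $\delta$ by a mollifier $\delta_\varepsilon$, verify the elementary identity for each $\varepsilon>0$, and pass to the limit using the bounds already in hand. The remaining ingredients (density of Schwartz functions in $L^2$, continuity of both sides) are routine; this justification of the $\delta$-manipulation is the only step I would write out with care, and it is not where any genuine difficulty lies.
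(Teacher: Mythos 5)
Your proposal is correct and is essentially the paper's own argument: the paper likewise passes to the space-time Fourier side via \eqref{space-time Fourier transform of density}, writes the pairing of $|\nabla|^{\tilde{\alpha}}\la\nabla\ra^{\alpha_0}\rho_{e^{it\Delta}\gamma_0e^{-it\Delta}}$ with a test function $V$ as an integral of $\hat{\gamma}_0(\eta,\xi-\eta)$ against the weighted restriction of $\tilde V$, and concludes by Cauchy--Schwarz/Plancherel and duality, which is exactly your $T$/$T^*$ formulation in different packaging. Your added remarks (surjectivity of $\gamma_0\mapsto(\la\nabla\ra^{\alpha_1}\gamma_0\la\nabla\ra^{\alpha_2})^\wedge$ onto $L^2$ and the mollification of the $\delta$) only make explicit points the paper leaves implicit.
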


\begin{proof}
Using the Plancherel theorem and \eqref{space-time Fourier transform of density} and then integrating out the delta function, we write
\begin{equation}
\begin{aligned}
&\int_{\mathbb{R}}\int_{\mathbb{R}^d} (|\nabla|^{\tilde{\alpha}}\la\nabla\ra^{\alpha_0}\rho_{e^{it\Delta}\gamma_0e^{-it\Delta}})(x) \overline{V(t,x)} dxdt\\
&=\frac{1}{(2\pi)^{d+1}}\int_{\mathbb{R}}\int_{\mathbb{R}^d} \Big\{\frac{1}{(2\pi)^{d-1}}\int_{\mathbb{R}^d}\delta(\tau+|\eta|^2-|\xi-\eta|^2)\hat{\gamma}_0(\eta,\xi-\eta)d\eta\Big\}\\
&\quad\quad\quad\quad\quad\quad\quad\quad\cdot\overline{|\xi|^{\tilde{\alpha}}\la\xi\ra^{\alpha_0}\tilde{V}(\tau,\xi)} d\xi d\tau\\
&=\frac{1}{(2\pi)^{2d}}\int_{\mathbb{R}^d} \int_{\mathbb{R}^d}\hat{\gamma}_0(\eta,\xi-\eta)\overline{|\xi|^{\tilde{\alpha}}\la\xi\ra^{\alpha_0}\tilde{V}(-|\eta|^2+|\xi-\eta|^2,\xi)} d\eta d\xi.
\end{aligned}
\end{equation}
By H\"older inequality and the Plancherel theorem, it is bounded by
\begin{equation}
\begin{aligned}
&\frac{1}{(2\pi)^{2d}}\|(\la\nabla\ra^{\alpha_1}\gamma_0\la\nabla\ra^{\alpha_2})^\wedge\|_{L_{\xi,\eta}^2}\Big\|\frac{|\xi|^{\tilde{\alpha}}\la\xi\ra^{\alpha_0}\tilde{V}(-|\eta|^2+|\xi-\eta|^2,\xi)}{\la\eta\ra^{\alpha_1}\la\xi-\eta\ra^{\alpha_2}}\Big\|_{L_{\xi,\eta}^2}\\
&=\frac{1}{(2\pi)^{d}}\|\la\nabla\ra^{\alpha_1}\gamma_0\la\nabla\ra^{\alpha_2}\|_{\mathfrak{S}^2}\Big\|\frac{|\xi|^{\tilde{\alpha}}\la\xi\ra^{\alpha_0}\tilde{V}(-|\eta|^2+|\xi-\eta|^2,\xi)}{\la\eta\ra^{\alpha_1}\la\xi-\eta\ra^{\alpha_2}}\Big\|_{L_{\xi,\eta}^2}.
\end{aligned}
\end{equation}
Therefore, by duality, \eqref{Strichartz estimate: reduction lemma} is equivalent to \eqref{dual Strichartz estimate: reduction lemma}.
\end{proof}

\begin{proof}[Proof of Theorem \ref{optimality}]
By the duality lemma (Lemma \ref{duality lemma}), the inequality \eqref{non-relativistic Strichartz} holds if and only if
\begin{equation}\label{non-relativistic dual inequality}
\Big\|\frac{|\xi|^{\tilde{\alpha}}\la\xi\ra^{\alpha_0}\tilde{V}(-|\eta|^2+|\xi-\eta|^2,\xi)}{\la\eta\ra^{\alpha_1}\la\xi-\eta\ra^{\alpha_2}}\Big\|_{L_{\xi}^2L_\eta^2}\lesssim \|\tilde{V}(\tau,\xi)\|_{L_{\tau\in\mathbb{R}}^2 L_\xi^2}.
\end{equation}
The square of the left hand side is
\begin{equation}
\begin{aligned}
&\Big\|\frac{|\xi|^{\tilde{\alpha}}\la\xi\ra^{\alpha_0}\tilde{V}(-|\eta|^2+|\xi-\eta|^2,\xi)}{\la\eta\ra^{\alpha_1}\la\xi-\eta\ra^{\alpha_2}}\Big\|_{L_{\xi}^2L_\eta^2}^2\\
&=\int_{\mathbb{R}^d}\int_{\mathbb{R}^d}\frac{|\xi|^{2\tilde{\alpha}}\la\xi\ra^{2\alpha_0}}{\la\eta\ra^{2\alpha_1}\la\xi-\eta\ra^{2\alpha_2}}|\tilde{V}(|\xi|^2-2\xi\cdot\eta, \xi)|^2 d\eta d\xi.
\end{aligned}
\end{equation}
Changing the variable $\eta$ by a rotation making $(1,0,\cdots, 0)\in\mathbb{R}_\eta^d$ parallel to $\xi$ and then changing the variable $\tau=|\xi|^2-2|\xi|\eta_1$ as in the proof of Theorem \ref{Strichartz estimates for density functions in the non-relativistic case}, we write
\begin{equation}\label{non-relativistic dual reformulation}
\begin{aligned}
&\int_{\mathbb{R}^d}\int_{\mathbb{R}^{d-1}}\int_{\mathbb{R}}\frac{|\xi|^{2\tilde{\alpha}}\la\xi\ra^{2\alpha_0}}{\la(\eta_1,\eta')\ra^{2\alpha_1}\la(\eta_1-|\xi|,\eta')\ra^{2\alpha_2}}|\tilde{V}(|\xi|^2-2|\xi|\eta_1, \xi)|^2 d\eta_1 d\eta' d\xi\\
&=\frac{1}{2}\int_{\mathbb{R}^d}\int_{\mathbb{R}^{d-1}}\int_{\mathbb{R}}\frac{|\xi|^{2\tilde{\alpha}-1}\la\xi\ra^{2\alpha_0}}{\la(\frac{|\xi|^2-\tau}{2|\xi|},\eta')\ra^{2\alpha_1}\la(-\frac{|\xi|^2+\tau}{2|\xi|},\eta')\ra^{2\alpha_2}}|\tilde{V}(\tau, \xi)|^2 d\tau d\eta' d\xi\\
&=\frac{1}{2}\int_{\mathbb{R}}\int_{\mathbb{R}^d}\Big\{\int_{\mathbb{R}^{d-1}}\frac{|\xi|^{2\tilde{\alpha}-1}\la\xi\ra^{2\alpha_0}}{\la(\frac{|\xi|^2-\tau}{2|\xi|},\eta')\ra^{2\alpha_1}\la(-\frac{|\xi|^2+\tau}{2|\xi|},\eta')\ra^{2\alpha_2}}d\eta'\Big\} |\tilde{V}(\tau, \xi)|^2 d\xi d\tau,
\end{aligned}
\end{equation}
where $\eta=(\eta_1,\eta')\in\mathbb{R}\times\mathbb{R}^d$.

1. Necessity of the condition \eqref{non-relativistic alpha}: From the inner integral $\{\cdots\}$ over $\mathbb{R}^{d-1}$ in \eqref{non-relativistic dual reformulation}, we see that it is necessary to assume that $\alpha_1+\alpha_2>\frac{d-1}{2}$ in \eqref{non-relativistic alpha}, because if $\alpha_1+\alpha_2\leq\frac{d-1}{2}$, the inequality \eqref{non-relativistic Strichartz} fails.

2. Necessity of the homogeneous half derivative on the left hand side of \eqref{non-relativistic Strichartz}: Suppose that $\alpha_1+\alpha_2>\frac{d-1}{2}$. Let
$$\tilde{V}_n(\tau,\xi)=n^{\frac{d+2}{2}}\mathbf{1}_{[-\frac{1}{n^2},\frac{1}{n^2}]}(\tau)\mathbf{1}_{B_{0,\frac{1}{n}}}(\xi),$$
where $B_{0,r}$ is the ball of radius $r$ centered at $0$ in $\mathbb{R}^d$. Note that for large $n$, $\tilde{V}_n$ is localized in low frequencies. We observe that by \eqref{non-relativistic dual reformulation}, if $\tilde{\alpha}<\frac{1}{2}$, then 
\begin{equation}
\begin{aligned}
&\Big\|\frac{|\xi|^{\tilde{\alpha}}\la\xi\ra^{\alpha_0}\tilde{V}(-|\eta|^2+|\xi-\eta|^2,\xi)}{\la\eta\ra^{\alpha_1}\la\xi-\eta\ra^{\alpha_2}}\Big\|_{L_{\xi}^2L_\eta^2}^2\\
&\sim\int_{\mathbb{R}}\int_{\mathbb{R}^d}\Big\{\int_{\mathbb{R}^{d-1}}\frac{n^{1-2\tilde{\alpha}}}{\la \eta' \ra^{2(\alpha_1+\alpha_2)}}d\eta'\Big\} |\tilde{V}(\tau, \xi)|^2 d\xi d\tau\sim n^{1-2\tilde{\alpha}}\underset{n\to\infty}\longrightarrow\infty,
\end{aligned}
\end{equation}
while $\|\tilde{V}_n\|_{L_{\tau\in\mathbb{R}}^2L_\xi^2}\sim 1$.
Thus, the inequality \eqref{non-relativistic Strichartz} fails when $\tilde{\alpha}<\frac{1}{2}$. 

3. Necessity of the condition \eqref{non-relativistic beta}: Suppose that $\alpha_1+\alpha_2>\frac{d-1}{2}$ and $\tilde{\alpha}=\frac{1}{2}$. We further assume that $\alpha_1\geq\alpha_2$. Now we define the sequence $\{V_n\}_{n=1}^\infty$ by
\begin{equation}
\tilde{V}_n(\tau,\xi)=\mathbf{1}_{[n^2-\frac{1}{2},n^2+\frac{1}{2}]}(\tau)\mathbf{1}_{[n-\frac{1}{2},n+\frac{1}{2}]\times[\frac{1}{2},\frac{1}{2}]^{d-1}}(\xi),
\end{equation}
where $\xi=(\xi_1,\xi')\in\mathbb{R}\times\mathbb{R}^{d-1}$, so that $\|\tilde{V}_n\|_{L_{\tau}^2L_\xi^2}=1$. Then, $|\frac{|\xi|^2-\tau}{2|\xi|}|\leq\frac{1}{2}+o_n(1)$ and $-\frac{|\xi|^2+\tau}{2|\xi|}=-n+o_n(1)$ in the support of $\tilde{V}(\tau,\xi)$, where $o_n(1)\to 0$ as $n\to\infty$. Hence, by \eqref{non-relativistic dual reformulation},
\begin{equation}
\begin{aligned}
&\Big\|\frac{|\xi|^{1/2}\la\xi\ra^{\alpha_0}\tilde{V}_n(-|\eta|^2+|\xi-\eta|^2,\xi)}{\la\eta\ra^{\alpha_1}\la\xi-\eta\ra^{\alpha_2}}\Big\|_{L_{\xi}^2L_\eta^2}^2\\
&\gtrsim \int_{|\eta'|\leq\frac{n}{2}}\frac{n^{2\alpha_0}}{\la\eta'\ra^{2\alpha_1}\la(-n,\eta')\ra^{2\alpha_2}}d\eta'\sim n^{2\alpha_0-2\alpha_2}\int_{|\eta'|\leq\frac{n}{2}}\frac{d\eta'}{\la\eta'\ra^{2\alpha_1}}\\
&\sim\left\{\begin{aligned}
&n^{2\alpha_0-2(\alpha_1+\alpha_2)+d-1}&&\textup{if }0\leq\alpha_1<\tfrac{d-1}{2},\\
&n^{2\alpha_0-2\alpha_2}\ln n&&\textup{if }\alpha_1=\tfrac{d-1}{2},\\
&n^{2\alpha_0-2\alpha_2}&&\textup{if }\alpha_1>\tfrac{d-1}{2}
\end{aligned}
\right.
\end{aligned}
\end{equation}
for sufficiently large $n$. Thus, \eqref{non-relativistic dual inequality} fails unless \eqref{non-relativistic beta} is not satisfied.

When $\alpha_1\leq\alpha_2$, we use the sequence $\{V_n\}_{n=1}^\infty$ given by
\begin{equation}
\tilde{V}(\tau,\xi)=\mathbf{1}_{[-n^2-\frac{1}{2},-n^2+\frac{1}{2}]}(\tau)\mathbf{1}_{[n-\frac{1}{2},n+\frac{1}{2}]\times[\frac{1}{2},\frac{1}{2}]^{d-1}}(\xi)
\end{equation}
to prove that the condition \eqref{non-relativistic beta} is necessary.
\end{proof}

\subsection{Strichartz estimates for operator kernels} \label{subs3-Str} 

We finish this section by recalling the statement of the Strichartz estimates for operator kernels, 
that we established in  \cite{CHP1}.

\begin{theorem}[Strichartz estimates for operator kernels]\label{Strichartz estimates for operator kernels}
Let $I\subset\mathbb{R}$. Then, we have
\begin{equation}
\begin{aligned}
\|e^{it\Delta}\gamma_0e^{-it\Delta}\|_{\mathcal{S}^\alpha(\mathbb{R})}&\lesssim\|\gamma_0\|_{\mathcal{H}^\alpha},\\
\Big\|\int_0^t e^{i(t-s)\Delta}R(s)e^{-i(t-s)\Delta}ds\Big\|_{\mathcal{S}^\alpha(\mathbb{R})}&\lesssim\|R(t)\|_{L_t^1(\mathbb{R};\mathcal{H}^\alpha)}.
\end{aligned}
\end{equation}
\end{theorem}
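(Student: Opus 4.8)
\medskip
\noindent\textbf{Proof plan.}
The plan is to deduce both estimates from the scalar Strichartz estimate for the free Schr\"odinger group together with the fact that $e^{it\Delta}$ preserves the Hilbert--Schmidt norm. For the homogeneous bound, since $\la\nabla_x\ra^\alpha$ and $\la\nabla_{x'}\ra^\alpha$ are Fourier multipliers in $x$ and in $x'$ respectively, they commute with the two--sided propagator $e^{it(\Delta_x-\Delta_{x'})}$; hence, setting $\widetilde{\gamma}_0:=\la\nabla\ra^\alpha\gamma_0\la\nabla\ra^\alpha$ (so that $\|\widetilde{\gamma}_0\|_{\mathfrak{S}^2}=\|\gamma_0\|_{\mathcal{H}^\alpha}$) and noting that $\la\nabla_x\ra^\alpha\la\nabla_{x'}\ra^\alpha$ applied to the kernel of $e^{it\Delta}\gamma_0e^{-it\Delta}$ is the kernel of $e^{it\Delta}\widetilde{\gamma}_0e^{-it\Delta}$, the first estimate reduces to the case $\alpha=0$, i.e.\ to $\|e^{it\Delta}\widetilde{\gamma}_0e^{-it\Delta}\|_{\mathcal{S}^0(\mathbb{R})}\lesssim\|\widetilde{\gamma}_0\|_{\mathfrak{S}^2}$. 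By a standard density argument I would assume $\widetilde{\gamma}_0$ finite rank and smooth. By the definition of $\mathcal{S}^0$ it then suffices to bound, for each admissible $(q,r)$, the mixed norm $\|(e^{it\Delta}\widetilde{\gamma}_0e^{-it\Delta})(x,x')\|_{L_t^qL_x^rL_{x'}^2}$; the transposed ($x\leftrightarrow x'$) norm is the same quantity for the adjoint $e^{it\Delta}\widetilde{\gamma}_0^*e^{-it\Delta}$, which has the same $\mathfrak{S}^2$ norm, so only this one needs treatment.

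The one conceptual ingredient I would use is the elementary kernel identity: for a Hilbert--Schmidt operator $B$ with kernel $B(x,x')$ one has $\|B(x,\cdot)\|_{L_{x'}^2}^2=(BB^*)(x,x)=\rho_{BB^*}(x)$, whence
\[
\|B(x,x')\|_{L_x^rL_{x'}^2}=\|\rho_{BB^*}\|_{L_x^{r/2}}^{1/2}.
\]
Applying this with $B=e^{it\Delta}\widetilde{\gamma}_0e^{-it\Delta}$, for which $BB^*=e^{it\Delta}(\widetilde{\gamma}_0\widetilde{\gamma}_0^*)e^{-it\Delta}$, the quantity to control becomes $\|\rho_{e^{it\Delta}Ge^{-it\Delta}}\|_{L_t^{q/2}L_x^{r/2}}^{1/2}$ with $G:=\widetilde{\gamma}_0\widetilde{\gamma}_0^*\geq0$ and $\|G\|_{\mathfrak{S}^1}=\|\widetilde{\gamma}_0\|_{\mathfrak{S}^2}^2$. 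Diagonalizing $G=\sum_j\lambda_j|u_j\ra\la u_j|$ with $\lambda_j\geq0$, $\{u_j\}$ orthonormal and $\sum_j\lambda_j=\|G\|_{\mathfrak{S}^1}$, one has $\rho_{e^{it\Delta}Ge^{-it\Delta}}(t,x)=\sum_j\lambda_j|(e^{it\Delta}u_j)(x)|^2$, so, since $q/2,r/2\geq1$, the triangle inequality in $L_t^{q/2}L_x^{r/2}$ together with the scalar Strichartz estimate $\|e^{it\Delta}u_j\|_{L_t^qL_x^r}\lesssim\|u_j\|_{L_x^2}$ (valid for every admissible $(q,r)$, including the endpoint $(2,\tfrac{2d}{d-2})$ since $d\geq3$) gives
\[
\|\rho_{e^{it\Delta}Ge^{-it\Delta}}\|_{L_t^{q/2}L_x^{r/2}}\leq\sum_j\lambda_j\|e^{it\Delta}u_j\|_{L_t^qL_x^r}^2\lesssim\sum_j\lambda_j\|u_j\|_{L_x^2}^2=\|G\|_{\mathfrak{S}^1}.
\]
Taking square roots and then the supremum over admissible $(q,r)$ completes the homogeneous estimate.

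For the inhomogeneous (Duhamel) estimate the $L^1_t$ hypothesis on $R$ makes the argument soft, so no Christ--Kiselev-type lemma is needed. Writing $\Phi(t):=\int_0^te^{i(t-s)\Delta}R(s)e^{-i(t-s)\Delta}ds$ and applying Minkowski's integral inequality in the Banach space $L_t^q(\mathbb{R};L_x^rL_{x'}^2)$, for each admissible $(q,r)$,
\[
\big\|\la\nabla_x\ra^\alpha\la\nabla_{x'}\ra^\alpha\Phi(t,x,x')\big\|_{L_t^qL_x^rL_{x'}^2}\leq\int_{\mathbb{R}}\big\|\mathbf{1}_{\{t>s\}}\,\la\nabla_x\ra^\alpha\la\nabla_{x'}\ra^\alpha(e^{i(t-s)\Delta}R(s)e^{-i(t-s)\Delta})(t,x,x')\big\|_{L_t^qL_x^rL_{x'}^2}\,ds.
\]
Dropping the indicator $\mathbf{1}_{\{t>s\}}$ (which only shrinks the $t$-range), translating $t\mapsto t+s$, and invoking the homogeneous estimate already established applied to $R(s)$ bounds the inner integrand by $C\|R(s)\|_{\mathcal{H}^\alpha}$; integrating in $s$ and taking the supremum over $(q,r)$, together with the symmetric term, yields $\|\Phi\|_{\mathcal{S}^\alpha(\mathbb{R})}\lesssim\|R\|_{L_t^1(\mathbb{R};\mathcal{H}^\alpha)}$.

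I expect no genuine obstacle. The only real content is the identity $\|B(x,\cdot)\|_{L_{x'}^2}^2=\rho_{BB^*}(x)$, which converts a mixed kernel norm into an ordinary density and thereby reduces everything to the trace--class (hence elementary) instance of the density Strichartz estimate; the rest is exponent bookkeeping --- crucially the observation that a Schr\"odinger-admissible pair $(q,r)$ is precisely what is required for $\|e^{it\Delta}u_j\|_{L_t^qL_x^r}\lesssim\|u_j\|_{L_x^2}$, so that the squaring $\rho=\sum_j\lambda_j|e^{it\Delta}u_j|^2$ and the passage $(q,r)\mapsto(q/2,r/2)$ close the loop --- plus the routine density/approximation step and, at the endpoint, staying within the dimensions $d\geq3$ where the endpoint Strichartz estimate is available.
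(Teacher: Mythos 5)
Your proposal is correct. A remark on provenance first: the paper at hand does not actually prove Theorem \ref{Strichartz estimates for operator kernels}; it only recalls the statement from \cite{CHP1}, where the estimates are obtained by regarding the kernel $\gamma(t,x,x')$ as an $L_{x'}^2$-valued function of $(t,x)$, using that $e^{-it\Delta_{x'}}$ acts unitarily on $L_{x'}^2$, and then invoking the Hilbert-space-valued (Keel--Tao) Strichartz estimates for $e^{it\Delta_x}$, with the Duhamel bound following from Minkowski exactly as in your second step. Your route is genuinely different in mechanism even though it rests on the same structural fact that only the $L_{x'}^2$ size of the kernel matters: the identity $\|B(x,\cdot)\|_{L_{x'}^2}^2=\rho_{BB^*}(x)$ converts the mixed norm into the density of the positive trace-class operator $\widetilde{\gamma}_0\widetilde{\gamma}_0^{\,*}$ (and $\widetilde{\gamma}_0^{\,*}\widetilde{\gamma}_0$ for the transposed norm), after which the spectral decomposition and the triangle inequality in $L_t^{q/2}L_x^{r/2}$ reduce everything to the scalar Strichartz estimate, endpoint included for $d\geq 3$. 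What this buys is that no abstract vector-valued Strichartz machinery is needed -- only the scalar Keel--Tao estimate -- while the commutation of $\la\nabla_x\ra^\alpha\la\nabla_{x'}\ra^\alpha$ with $e^{it(\Delta_x-\Delta_{x'})}$ disposes of the derivatives exactly as in \cite{CHP1}. Two small points you may wish to make explicit: the supremum over admissible pairs requires the scalar Strichartz constant to be uniform in $(q,r)$, which holds for $d\geq3$ by interpolating between $(\infty,2)$ and the endpoint $(2,\tfrac{2d}{d-2})$ (this uniformity is equally needed in the cited proof, so it is not a defect of your approach); and in the Duhamel step the indicator should be $\mathbf{1}_{\{0<s<t\}}$ for $t>0$ and its mirror image for $t<0$, which changes nothing in the Minkowski argument.
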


\section{Linear response theory: invertibility of $(1+\mathcal{L})$}\label{invertibility}
% Replace (1-L) by (1+L)

We review the linear response theory from Section 3 of Lewin and Sabin \cite{LS2}, which addresses the invertibility of the operator $(1+\mathcal{L})$, 
with $\mathcal{L}$ defined by 
\begin{equation}
\begin{aligned}
\mathcal{L}(\phi)&=-w_2*\rho\Big[e^{it\Delta}\Big(\mathcal{W}_{w_1*\phi}^{(1)}(t)\gamma_f+\gamma_f\mathcal{W}_{w_1*\phi}^{(1)}(t)^*\Big) e^{-it\Delta}\Big]\\
&=iw_2*\rho\Big[\int_0^t e^{i(t-t_1)\Delta}\big[(w_1*\phi)(t_1),\gamma_f\big]e^{-i(t-t_1)\Delta} dt_1\Big],
\end{aligned}
\end{equation}
where $w=w_1*w_2$. Roughly speaking, it asserts that $(1+\mathcal{L})$ is invertible on $L_{t\geq 0}^2L_x^2$, provided that $f$ is strictly decreasing, and that $\hat{w}_+(0)$ and $\hat{w}_-$ are not too large, where $A_\pm=\max\{\pm A,0\}$ so that $A=A_+-A_-$.

\begin{proposition}[Invertibility of $(1+\mathcal{L})$]\label{prop: invertibility}
Let $d\geq 3$. We assume that $f\in L_{r\geq 0}^\infty$ is real-valued, $f'(r)<0$ for $r>0$,
\begin{equation}\label{condition on f}
\int_0^\infty (r^{d/2-1}|f(r)|+|f'(r)|)dr<\infty\quad \textup{and}\quad \int_{\mathbb{R}^d} \frac{\check{g}(x)}{|x|^{d-2}}dx<\infty,
\end{equation}
where $g(\xi)=f(|\xi|^2)$. Moreover, we assume that the interaction potential $w\in L^1$ is even,
\begin{equation} \label{sec4:w-assum}
\|\hat{w}_-\|_{L^\infty}<2|\mathbb{S}^{d-1}|\Big(\int_{\mathbb{R}^d}\frac{|\check{g}(x)|}{|x|^{d-2}}dx\Big)^{-1}\quad\textup{and}\quad
\hat{w}_+(0)<\frac{2}{\epsilon_g}|\mathbb{S}^{d-1}|,
\end{equation}
where
\begin{equation}\label{e_g}
\epsilon_g:=-\liminf_{(\tau,\xi)\to(0,0)}\frac{\textup{Re}(m_f(\tau,\xi))}{2|\mathbb{S}^{d-1}|}
\end{equation}
and
\begin{equation}\label{m_f}
(\mathcal{F}_t^{-1}m_f)(t,\xi)=2\mathbf{1}_{t\geq 0}\sqrt{2\pi}\sin(t|\xi|^2)\check{g}(2t\xi).
\end{equation}
Then, $1+\mathcal{L}$ is invertible on $L_{t\geq0 }^2L_x^2$.
\end{proposition}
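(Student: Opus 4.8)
The plan is to follow the linear response analysis of Lewin--Sabin \cite{LS2}: after extending functions by zero to negative times, $\mathcal L$ becomes a causal space--time convolution, equivalently a space--time Fourier multiplier whose symbol is built from $\gamma_f$ and $\hat w$, so that the invertibility of $1+\mathcal L$ reduces to a uniform lower bound on that symbol, which is then read off from the hypotheses on $f$ and $w$. For \emph{Step 1 (reduction to a Fourier multiplier)}: view $\phi\in L^2_{t\ge0}L^2_x$ as a function on $\mathbb R\times\mathbb R^d$ supported in $\{t\ge0\}$; since the time integral defining $\mathcal L(\phi)$ runs from $0$ to $t$, $\mathcal L(\phi)$ is again supported in $\{t\ge0\}$, so $\mathcal L$ maps $L^2_{t\ge0}L^2_x$ into itself and may be analyzed via the Fourier transform in $t\in\mathbb R$. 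Because $\gamma_f=f(-\Delta)$ commutes with $e^{it\Delta}$ and with the convolutions by $w_1,w_2$, while the latter act as multiplication by $\hat w_1,\hat w_2$ in the spatial frequency, a Fourier-space computation --- using \eqref{Fourier transform of density} for the Fourier transform of a density, expressing the kernel of $[(w_1*\phi)(t_1),\gamma_f]$ in the frequency variables, conjugating by $e^{i(t-t_1)(\Delta_x-\Delta_{x'})}$, restricting to the diagonal, and carrying out the $t_1$-integration, which produces the causal $\sin$-type kernel (here one uses $\hat w=\hat w_1\hat w_2$ and that $\check g$, the kernel of $\gamma_f$, is radial since $g(\xi)=f(|\xi|^2)$ is radial) --- shows that $1+\mathcal L$ acts on the space--time Fourier side as multiplication by
\[
\sigma(\tau,\xi)=1+\hat w(\xi)\,m_f(\tau,\xi),
\]
with $m_f$ the function of \eqref{m_f} (the normalizations in \eqref{m_f}--\eqref{e_g} being chosen precisely so that this is the symbol).

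\emph{Step 2 (reduction to a symbol bound).} It suffices to prove that $\Re\,\sigma(\tau,\xi)\ge c>0$ uniformly in $(\tau,\xi)\in\mathbb R\times\mathbb R^d$: a real part bounded below makes $\sigma$ non-vanishing, and since $1+\mathcal L$ is the identity plus a causal convolution, whose symbol $\sigma(\cdot,\xi)$ extends holomorphically and boundedly to a half-plane in $\tau$ and tends to $1$ at infinity, a standard Paley--Wiener / winding-number argument (as in \cite{LS2}) then yields a bounded inverse of $1+\mathcal L$ on the half-line space $L^2_{t\ge0}L^2_x$.

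\emph{Step 3 (properties of $m_f$).} From \eqref{m_f} and the radiality of $\check g$, two facts about $m_f$ are needed. First, since $g\in L^1$ --- so $\check g$ is bounded and continuous --- and $\int_{\mathbb R^d}\frac{|\check g(x)|}{|x|^{d-2}}\,dx=|\mathbb S^{d-1}|\int_0^\infty r\,|\check g(r)|\,dr<\infty$, an elementary estimate (using $|\sin(t|\xi|^2)|\le t|\xi|^2$ and the substitution $r=2t|\xi|$) gives that $m_f$ is continuous on $(\mathbb R\times\mathbb R^d)\setminus\{(0,0)\}$, that $m_f(\tau,0)=0$, that $m_f(\tau,\xi)\to0$ as $|\tau|\to\infty$ (Riemann--Lebesgue), and that
\[
\|m_f\|_{L^\infty_{\tau,\xi}}\le\frac{1}{2|\mathbb S^{d-1}|}\int_{\mathbb R^d}\frac{|\check g(x)|}{|x|^{d-2}}\,dx .
\]
Second --- and this is where the strict monotonicity $f'(r)<0$ enters essentially --- one needs a dissipativity/sign statement for the response: the negative values of $\Re\,m_f$ are bounded below, with the most negative behavior attained only in the limit $(\tau,\xi)\to(0,0)$, which is precisely what $\epsilon_g$ in \eqref{e_g} records, namely $\liminf_{(\tau,\xi)\to(0,0)}\Re\,m_f(\tau,\xi)=-2|\mathbb S^{d-1}|\epsilon_g$. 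Concretely, at $\tau=0$ the principal-value integral representing $\Re\,m_f(0,\xi)$ has a definite sign once the directions orthogonal to $\xi$ are integrated out and one uses that $s\mapsto\int_{\mathbb R^{d-1}}f(s^2+|p'|^2)\,dp'$ is monotone; the detailed behavior near the singular point $(0,0)$ is the substantial part.

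\emph{Step 4 (conclusion).} Write $\hat w=\hat w_+-\hat w_-$ with $\hat w_\pm\ge0$; $\hat w$ is real since $w$ is real and even, so at each $\xi$ at most one of $\hat w_\pm(\xi)$ is nonzero. Where $\hat w(\xi)\le0$ we get $|\sigma(\tau,\xi)|\ge1-\hat w_-(\xi)\|m_f\|_{L^\infty}\ge1-\|\hat w_-\|_{L^\infty}\|m_f\|_{L^\infty}>0$ by the first inequality in \eqref{sec4:w-assum}. Where $\hat w(\xi)>0$ we use $\Re\,\sigma(\tau,\xi)=1+\hat w_+(\xi)\,\Re\,m_f(\tau,\xi)$, which is $\ge1$ if $\Re\,m_f(\tau,\xi)\ge0$; if $\Re\,m_f(\tau,\xi)<0$, the bound of Step 3, the continuity $\hat w_+(\xi)=\hat w_+(0)+o(1)$ as $\xi\to0$, and the second inequality in \eqref{sec4:w-assum}, $\hat w_+(0)<\tfrac{2}{\epsilon_g}|\mathbb S^{d-1}|$, keep $\Re\,\sigma$ bounded below by a positive constant in a neighborhood of the origin, while away from the origin $\sigma$ is continuous and $\hat w(\xi)m_f(\tau,\xi)\to0$ as $|\tau|+|\xi|\to\infty$ (from $\hat w(\xi)\to0$ as $|\xi|\to\infty$ and the properties of $m_f$), so a compactness argument yields $\Re\,\sigma\ge c>0$ throughout, and Step 2 concludes. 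The genuinely substantial point is Step 3 --- converting $f'<0$ into a usable sign/monotonicity property of $\Re\,m_f$ and pinning its extremal behavior at the singular point $(\tau,\xi)=(0,0)$ via $\epsilon_g$; this is also where the restriction $d\ge3$ and the integrability hypothesis $\int_{\mathbb R^d}\frac{|\check g(x)|}{|x|^{d-2}}\,dx<\infty$ are used. The passage from the symbol bound to invertibility on $L^2_{t\ge0}L^2_x$ rather than on $L^2_{t\in\mathbb R}L^2_x$ is a secondary point handled by causality.
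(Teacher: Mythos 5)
There is a genuine gap, and it sits exactly at the point you yourself flag as "the substantial part": your Steps 2 and 4 rest on the claim that $\Re\big(1+\hat w(\xi)m_f(\tau,\xi)\big)\ge c>0$ uniformly, but the hypotheses \eqref{sec4:w-assum} do not give this, and your argument for it does not close. The assumptions only constrain $\|\hat w_-\|_{L^\infty}$ and the single value $\hat w_+(0)$; for $\xi$ away from the origin $\hat w_+(\xi)$ is unconstrained (beyond $\|\hat w\|_{L^\infty}\le\|w\|_{L^1}$), and there is no sign information on $\Re\,m_f(\tau,\xi)$ in the intermediate region $\tau\neq0$, $\xi\neq0$. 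So where $\hat w(\xi)>0$ and $\Re\,m_f(\tau,\xi)<0$ at such points, nothing prevents $1+\hat w_+(\xi)\Re\,m_f(\tau,\xi)\le0$; the $\epsilon_g$-condition and the continuity of $\hat w_+$ at $0$ only help in a neighborhood of $(\tau,\xi)=(0,0)$. Your concluding "compactness argument" then only says that the continuous function $\Re\sigma$ attains its minimum on a compact set; it gives no reason that minimum is positive. The missing idea -- and the place where $f'<0$ is actually used away from the origin in the paper's proof -- is the \emph{imaginary} part: for $\tau\neq0$, $\xi\neq0$ one has the explicit representation \eqref{sec4:case3}, which shows $\Im\,m_f(\tau,\xi)\neq0$ because $f'<0$; since $\hat w$ is real, the symbol $1+\hat w m_f$ is then non-zero there even when its real part is negative. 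The paper's scheme is: bound $\|m_f\|_{L^\infty}$ by \eqref{m_f bound}; treat separately $\tau=0$ (where $m_f\ge0$ and the $\hat w_-$ condition applies), $\xi=0$, the punctured region $\tau\neq0,\xi\neq0$ (imaginary part), and a neighborhood of $(0,0)$ (the $\epsilon_g$ condition); then use continuity of $m_f$ via \eqref{m_f'}, the decay $\hat w(\xi)\to0$, and compactness to upgrade non-vanishing to a uniform lower bound on the \emph{modulus} $|1+\hat w m_f|$, not on its real part.

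This change also affects your Step 2. With only $|\sigma|\ge c$ on the real $\tau$-axis (rather than $\Re\sigma\ge c$, which would propagate into the half-plane by a Poisson-integral argument), inversion of the causal operator $1+\mathcal L$ on the half-line space $L^2_{t\ge0}L^2_x$ is a genuine Wiener--Hopf point: one needs non-vanishing of the analytically extended symbol in the relevant half-plane (equivalently a winding-number/Paley--Wiener argument), which is what \cite{LS2} supplies and what the paper invokes through its citation of Corollary 1 there. Your Steps 1 and 3 (the multiplier identification \eqref{m_f}, the bound \eqref{m_f bound}, continuity, $m_f(\tau,0)=0$, decay in $\tau$, and the role of \eqref{e_g} near the origin) are consistent with the paper; the proof as proposed, however, would fail without replacing the real-part positivity by the imaginary-part non-vanishing argument and the modulus bound.
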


\begin{proof}[Sketch of the proof]
We sketch the proof for the sake of completeness of the article and for the convenience of the reader. For details, we refer the reader to \cite[Proposition 1, Proposition 2 and Corollary 1]{LS2}. We assume $d\geq 3$ for brevity, however, the invertibility of $(1+\mathcal{L})$ was proved in \cite{LS2} for any dimension $d\geq 1$.

The space-time Fourier transformation of $\mathcal{L}(\phi)$ is directly computed as
\begin{equation}
(\mathcal{L}\phi)^\sim(\tau,\xi)=\hat{w}(\xi)m_f(\tau,\xi)\tilde{\phi}(\tau,\xi),\quad \forall\phi\in \mathcal{D}([0,+\infty)\times\mathbb{R}^d),
\end{equation}
with \eqref{m_f}, in other words,
\begin{equation}\label{explicit L}
\widehat{\mathcal{L}\phi}(t,\xi)=2\sqrt{2\pi}\hat{w}(\xi)\int_0^\infty \sin(s|\xi|^2)\check{g}(2s\xi)\hat{\phi}(t-s,\xi)ds.
\end{equation}
Note that the operator $\mathcal{L}$ maps $L_{t\geq0}^2L_x^2$ to itself, because $\widehat{\mathcal{L}\phi}(t,\xi)=0$ for $t<0$. Moreover, we have
\begin{equation}\label{m_f bound}
\|m_f\|_{L_{\tau,\xi}^\infty}\leq\frac{1}{2|\mathbb{S}^{d-1}|}\Big(\int_{\mathbb{R}^d}\frac{|\check{g}(x)|}{|x|^{d-2}}dx\Big)
\end{equation}
and
\begin{equation}
\|\mathcal{L}\|_{L_{t\geq0}^2L_x^2\to L_{t\geq 0}^2L_x^2}\leq\frac{\|\hat{w}\|_{L^\infty}}{2|\mathbb{S}^{d-1}|}\Big(\int_{\mathbb{R}^d}\frac{|\check{g}(x)|}{|x|^{d-2}}dx\Big)
\end{equation}
(see \cite[Proposition 1]{LS2}). We remark that the operator $\mathcal{L}$ looks different from the corresponding linear operator $\mathcal{L}_1$ in Lewin-Sabin \cite{LS2} at first glance, however they are indeed the same, since
\begin{equation}
\begin{aligned}
(\mathcal{L}\phi)^\sim(\tau,\xi)&=\hat{w}_2(\xi)\Big(\rho\Big[i\int_0^t e^{i(t-t_1)\Delta}\big[(w_1*\phi)(t_1),\gamma_f\big]e^{-i(t-t_1)\Delta} dt_1\Big]\Big)^\sim(\tau,\xi)\\
&=\hat{w}_2(\xi)\hat{w}_1(\xi)m_f(\tau,\xi)\tilde{\phi}(\tau,\xi)\quad\textup{(by \cite[Proposition 1]{LS2})}\\
&=\hat{w}(\xi)m_f(\tau,\xi)\tilde{\phi}(\tau,\xi).
\end{aligned}
\end{equation}
When $\gamma_f=\mathbf{1}_{(-\Delta\leq\mu)}$, one can compute the multiplier $m_d^F(\mu,\tau,\xi):=m_f(\tau,\xi)$ as
\begin{equation}\label{m_d^F}
m_d^F(\mu,\tau,\xi)=\frac{|\mathbb{S}^{d-2}|\mu^{\frac{d-1}{2}}}{(2\pi)^{\frac{d-1}{2}}}\int_0^1 m_1^F(\mu(1-r^2),\tau,\xi) r^{d-2}dr,
\end{equation}
where
\begin{equation}\label{m_1^F}
\begin{aligned}
m_1^F(\mu,\tau,\xi)&=\frac{1}{2\sqrt{2\pi}|\xi|}\log\Big|\frac{(|\xi|^2+2|\xi|\sqrt{\mu})^2-\tau^2}{(|\xi|^2-2|\xi|\sqrt{\mu})^2-\tau^2}\Big|\\
&+i\frac{\sqrt{\pi}}{2\sqrt{2}|\xi|}\Big\{\mathbf{1}_{(|\tau+|\xi|^2|\leq 2\sqrt{\mu}|\xi|)}-\mathbf{1}_{(|\tau-|\xi|^2|\leq 2\sqrt{\mu}|\xi|)}\Big\}
\end{aligned}
\end{equation}
(see \cite[Proposition 2]{LS2}). By the relation $\gamma_f=f(-\Delta)=-\int_0^\infty \mathbf{1}_{(-\Delta\leq s)}f'(s)ds$, $m_f$ can be written in terms of $m_d^F$ as 
\begin{equation}\label{m_f'}
m_f(\tau,\xi)=-\int_0^\infty m_d^F(s,\tau,\xi) f'(s)ds.
\end{equation}

For $\phi\in L_{t\geq0}^2L_x^2$, the space-time Fourier transformation of $(1+\mathcal{L})\phi$ is given by $(1+\hat{w}(\xi)m_f(\tau,\xi))\tilde{\phi}(\tau,\xi)$. Thus, the invertibility of $(1+\mathcal{L})$ follows from a uniform lower bound on $|1+\hat{w}m_f|$. Let
\begin{equation}
A:=\Big\{\xi\in\mathbb{R}^d:\ |\hat{w}(\xi)|\geq\frac{1}{4|\mathbb{S}^{d-1}|}\int_{\mathbb{R}^d}\frac{|\check{g}(x)|}{|x|^{d-2}}dx\Big\}.
\end{equation}
Then, by the bound \eqref{m_f bound}, $|(1+\hat{w}m_f)|\geq\frac{1}{2}$ on $A$. Note that $A^c$ is a compact subset in $\mathbb{R}^d$, because $\hat{w}(\xi)\to 0$ as $\xi\to\infty$. Moreover, by \eqref{m_f'}, $m_f$ is continuous on $\mathbb{R}\times(\mathbb{R}^d\setminus\{0\})$ (so as $(1+\hat{w}m_f)$ by the Riemann-Lebesgue lemma), since $m_d^F$ is continuous on $\mathbb{R}\times(\mathbb{R}^d\setminus\{0\})$. Therefore, it suffices to show that $(1+\hat{w}m_f)$ is non-zero for all $\xi$.

We consider the four cases separately.\\
\textbf{Case 1 ($(\tau,\xi)=(0,\xi)$ with $\xi\neq0$)} We observe that $m_f(0,\xi)\geq0$ for $\xi\neq0$, since $f'(s)< 0$ and $m_1^F(s,0,\xi)\geq0$ in the integral \eqref{m_f'} (see \eqref{m_1^F}). Hence, it follows that
\begin{equation}
\begin{aligned}
m_f(0,\xi)\hat{w}(\xi)+1&\geq 1-\hat{w}_-(\xi)m_f(0,\xi)\\
&\geq 1-\|\hat{w}_-\|_{L^\infty}\frac{1}{2|\mathbb{S}^{d-1}|}\Big(\int_{\mathbb{R}^d}\frac{|\check{g}(x)|}{|x|^{d-2}}dx\Big)\quad\textup{(by \eqref{m_f bound})}\\
&>0\quad\textup{(by the assumption \eqref{sec4:w-assum} on $\hat{w}_-$)}.
\end{aligned}
\end{equation}
\textbf{Case 2 ($(\tau,\xi)=(\tau,0)$ with $\tau\neq0$)} In this case, $m_1^F(\tau,0)=0$, so $(m_f(\tau,0)\hat{w}(0)+1)=1$.\\
\textbf{Case 3 ($(\tau,\xi)$ with $\tau\neq0$ and $\xi\neq0$)} It suffices to show that $\textup{Im}(m_f(\tau,\xi))\neq0$. By the relation $\textup{Im}(m_f(-\tau,\xi))=-\textup{Im}(m_f(\tau,\xi))$, we may assume that $\tau>0$. By \eqref{m_f'} and \eqref{m_d^F}, one can write the imaginary part of $m_f(\tau,\xi)$ explicitly as
\begin{equation}
\begin{aligned} \label{sec4:case3}
\textup{Im}(m_f(\tau,\xi))&=\frac{|\mathbb{S}^{d-2}|}{4(2\pi)^{\frac{d-2}{2}}}\int_0^1r^{d-2}\Big\{\int_{\frac{(\tau-|\xi|^2)^2}{4|\xi|^2(1-r^2)}}^{\frac{(\tau+|\xi|^2)^2}{4|\xi|^2(1-r^2)}} s^{\frac{d-1}{2}}f'(s)ds\Big\}dr,
\end{aligned}
\end{equation}
Since by the assumption $f'(s)<0$, we conclude from \eqref{sec4:case3} that  $\textup{Im}(m_f(\tau,\xi))\neq0$.\\
\textbf{Case 4 ($(\tau,\xi)$ in the neighborhood of $(0,0)$)} By the definition of $m_f$ and \eqref{m_f'}, one can show that
\begin{equation}
-\epsilon_g2|\mathbb{S}^{d-1}|\leq \textup{Re}(m_f(\tau,\xi))\leq\frac{1}{2|\mathbb{S}^{d-1}|}\Big(\int_{\mathbb{R}^d}\frac{|\check{g}(x)|}{|x|^{d-2}}dx\Big)
\end{equation}
near $(0,0)$ (see \cite{LS2} for details). Thus, by the assumptions on $\hat{w}_\pm$, $\textup{Re}(\hat{w}(\xi)m_f(\tau,\xi)+1)>0$.
\end{proof}

\section{Bound on $\mathcal{A}_{m,n}(\phi)$}\label{bound on A}

In this section, we estimate the operator $\mathcal{A}_{m,n}$.
\begin{proposition}[Bounds on $\mathcal{A}_{m,n}$]\label{bound on A}
Let $d\geq 3$, $\beta>\frac{d+2}{2}$ and $\beta_0>\frac{1}{4}$. Then, there exists $C_{\mathcal{A}}>0$ such that for any $(m,n)$ with $m,n\geq 1$, 
\begin{equation}\label{bound on A1}
\|\mathcal{A}_{m,n}(\phi)\|_{L_{t,x}^2}\leq C_{\mathcal{A}}^{m+n+1}\|\la\cdot\ra^\beta f\|_{L^\infty}\|\phi\|_{L_{t,x}^2}^{m+n}
\end{equation}
and
\begin{equation}\label{bound on A2}
\begin{aligned}
&\|\mathcal{A}_{m,n}(\phi)-\mathcal{A}_{m,n}(\psi)\|_{L_{t,x}^2}\\
&\leq (m+n)C_{\mathcal{A}}^{m+n+1}\|\la\cdot\ra^\beta f\|_{L^\infty}\Big\{\|\phi\|_{L_{t,x}^2}+\|\psi\|_{L_{t,x}^2}\Big\}^{m+n-1}\|\phi-\psi\|_{L_{t,x}^2},
\end{aligned}
\end{equation}
where the constant $C_{\mathcal{A}}$ depends only on $d$, $\|\langle\cdot\rangle^{\beta_0}\hat{w}_1\|_{L^\infty}$, $\|\hat{w}_2\|_{L^\infty}$,  $\|\hat{w}_1\|_{L^{\frac{2d}{d-2}}}$, $\|\hat{w}_2\|_{L^{\frac{2d}{d-2}}}$ and $\|\hat{w}_2\|_{L^{\frac{2d}{d-3}}}$.
\end{proposition}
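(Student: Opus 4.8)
The plan is to bound $\|\mathcal A_{m,n}(\phi)\|_{L^2_{t,x}}$ by duality, then to unwind the finite‑time wave operators $\mathcal W^{(m)}_{w_1*\phi}$, $\mathcal W^{(n)}_{w_1*\phi}$ one factor at a time, paying the derivative weights produced by the Strichartz estimates partly with $w_1$ and mostly with $\gamma_f$. Write $V:=w_1*\phi$ and $\Psi_k(t):=e^{it\Delta}\mathcal W^{(k)}_{V}(t)$ for the $k$‑th Dyson term, so that by \eqref{A operator} one has $\mathcal A_{m,n}(\phi)(t)=w_2*\rho[\Psi_m(t)\gamma_f\Psi_n(t)^*]$. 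Pairing with $g\in L^2_{t,x}$, $\|g\|_{L^2_{t,x}}\le1$, and transferring $w_2$ onto $g$, it suffices to estimate $\int_0^\infty\mathrm{Tr}[\Psi_m(t)\gamma_f\Psi_n(t)^*\,\overline{h(t)}]\,dt$, where $h$ is a reflection of $\overline{w_2}*g$ regarded as a multiplication operator. Splitting frequencies of the density, $\hat w_2\in L^\infty$ controls $|\xi|\ge1$ and $\hat w_2\in L^{2d/(d-2)}$ (with $\hat w_2\in L^{2d/(d-3)}$ entering the lower‑order pieces) controls $|\xi|\le1$; this reduces the task to a bound on $\||\nabla|^{1/2}\rho[\Psi_m(t)\gamma_f\Psi_n(t)^*]\|_{L^2_tH^{\alpha_0}_x}$ together with an $L^2_t\mathfrak S^1$‑bound for $\Psi_m(t)\gamma_f\Psi_n(t)^*$.

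Set $\Gamma_{m,n}(t):=\Psi_m(t)\gamma_f\Psi_n(t)^*$; differentiating and using \eqref{W_V^n} gives the retarded representation $\Gamma_{m,n}(t)=-i\int_0^t e^{i(t-s)\Delta}\big(V(s)\Gamma_{m-1,n}(s)-\Gamma_{m,n-1}(s)V(s)\big)e^{-i(t-s)\Delta}\,ds$ with $\Gamma_{m,n}(0)=0$ for $m+n\ge1$. Unwinding this $m+n$ times, at each step a single multiplication operator $V$ sits next to a lower‑order $\Gamma$, and one applies the inhomogeneous (retarded) forms of Corollary \ref{Bound on IV} (dual to \eqref{non-relativistic Strichartz}) and of Theorem \ref{Strichartz estimates for operator kernels}, obtained either directly from the retarded estimate there or via a Christ--Kiselev argument. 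The sign‑indefiniteness of $f$ is dealt with by splitting $\gamma_f=\gamma_{f_+}-\gamma_{f_-}$ with $f_\pm\ge0$ and $\|\langle\cdot\rangle^\beta f_\pm\|_{L^\infty}\le\|\langle\cdot\rangle^\beta f\|_{L^\infty}$, so we may assume $\gamma_f\ge0$. Importantly, no factorial gain is needed: the double series $\sum_{m,n\ge1}C_{\mathcal A}^{m+n+1}\|\phi\|_{L^2_{t,x}}^{m+n}$ already converges for small $\|\phi\|_{L^2_{t,x}}$, so the crude retarded (rather than simplex‑refined) estimates suffice for \eqref{wave equation formulation} to be a contraction.

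The heart of the argument is the regularity bookkeeping. The homogeneous half derivative $|\nabla|^{1/2}$ and the weight $\langle\nabla\rangle^{\alpha_0}$ appearing in Theorem \ref{Strichartz estimates for density functions in the non-relativistic case} (see \eqref{non-relativistic beta}) must be distributed so that only a small amount, less than $\beta_0$, of derivative lands on each factor $V$; this $w_1$ can carry, since $\langle\nabla\rangle^{\beta_0}(w_1*\phi)=(\langle\nabla\rangle^{\beta_0}w_1)*\phi$ and $\langle\cdot\rangle^{\beta_0}\hat w_1\in L^\infty$ (interpolated against $\hat w_1\in L^{2d/(d-2)}$ to keep the $L^2_tL^d_x$‑type norm used in \eqref{boundedness of wave operator}). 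The remaining derivatives are absorbed by $\gamma_f$, which acts as $\langle\nabla\rangle^{-\beta}$ up to a bounded Fourier multiplier of norm $\le\|\langle\cdot\rangle^\beta f\|_{L^\infty}$; the hypotheses $\beta>\tfrac{d+2}{2}$ and $\beta_0>\tfrac14$ leave exactly enough room to cover all $m+n$ factors while keeping $\alpha_0,\alpha_1,\alpha_2$ in the admissible range of \eqref{non-relativistic beta}. Each peeled factor contributes a universal constant times $\|w_1*\phi\|_{L^2_tL^d_x}\lesssim\|\hat w_1\|_{L^{2d/(d-2)}}\|\phi\|_{L^2_{t,x}}$, and the central factor contributes $\|\langle\cdot\rangle^\beta f\|_{L^\infty}$ once; tracking constants through the iteration yields \eqref{bound on A1} with $C_{\mathcal A}$ depending only on $d$, $\|\langle\cdot\rangle^{\beta_0}\hat w_1\|_{L^\infty}$, $\|\hat w_2\|_{L^\infty}$, $\|\hat w_1\|_{L^{2d/(d-2)}}$, $\|\hat w_2\|_{L^{2d/(d-2)}}$ and $\|\hat w_2\|_{L^{2d/(d-3)}}$.

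For the Lipschitz estimate \eqref{bound on A2}, note that $\phi\mapsto\Psi_k(t)$ is a sum of $k$‑linear expressions in the factors $w_1*\phi$, so $\Psi_k^{(\phi)}(t)-\Psi_k^{(\psi)}(t)$ telescopes into $k$ terms, each carrying exactly one factor $w_1*(\phi-\psi)$ and $k-1$ factors equal to $w_1*\phi$ or $w_1*\psi$. Applying the estimate above term by term to the expansion of $\mathcal A_{m,n}(\phi)-\mathcal A_{m,n}(\psi)$ and summing the $m+n$ terms produces \eqref{bound on A2}, with the prefactor $m+n$ and the symmetric sum $\{\|\phi\|_{L^2_{t,x}}+\|\psi\|_{L^2_{t,x}}\}^{m+n-1}\|\phi-\psi\|_{L^2_{t,x}}$. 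The main obstacle throughout is the regularity distribution in the second step: arranging the weights so that every one of them is genuinely paid for — a tiny part, below $\tfrac14$, by $w_1$ and the bulk by $\gamma_f$ via $\beta>\tfrac{d+2}{2}$ — while simultaneously fitting the nested, retarded time integrations into the endpoint Strichartz framework (legitimate here precisely because $d\ge3$) with constants uniform in $(m,n)$.
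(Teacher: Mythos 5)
Your skeleton (duality, paying derivative weights with $\gamma_f$ and $w_1$, telescoping for the Lipschitz bound) points in the right direction, but the central mechanism is missing, and the reduction you propose would not close. First, reducing the dual pairing to ``a bound on $\||\nabla|^{1/2}\rho[\Psi_m\gamma_f\Psi_n^*]\|_{L_t^2H_x^{\alpha_0}}$ together with an $L_t^2\mathfrak{S}^1$ bound'' is not attainable: for small $m+n$ the operator $\Psi_m(t)\gamma_f\Psi_n(t)^*$ is not trace class (for $m=n=1$ each Dyson factor lies only in $\mathfrak{S}^{2d}$ by \eqref{boundedness of wave operator}, so the product is at best of $\mathfrak{S}^{d}$ type), and your retarded Duhamel iteration at the operator level cannot be run through Theorem \ref{Strichartz estimates for operator kernels}, whose inhomogeneous estimate requires the source in $L_t^1\mathcal{H}^\alpha$: you only control $w_1*\phi$ in $L^2_{t,x}$, and $\gamma_f$ has infinite $\mathcal{H}^\alpha$ norm, so the bootstrap that works for $\mathcal{B}$ (where $Q_0\in\mathcal{H}^\alpha$) cannot be imitated here, and Christ--Kiselev does not convert an $L_t^1$ requirement into an $L_t^2$ one. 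The paper never leaves the dual trace form: after $|\textup{Tr}(AB)|\le\textup{Tr}(|A||B|)$ it dominates the simplex time integrals by unrestricted products of single-time integrals $\int_0^\infty e^{-it\Delta}|w_1*\phi(t)|e^{it\Delta}dt$, then applies Schatten--H\"older, bounding each generic factor in $\mathfrak{S}^{2d}$ via \eqref{interpolation1} (which needs only $L_t^2L_x^d$) and the two factors adjacent to $\gamma_f$ in $\mathfrak{S}^{\frac{2d}{d-1}}$ with weights $\langle\nabla\rangle^{-\tilde\beta}$ via \eqref{interpolation2}, the weights being absorbed through $\|(1-\Delta)^{\tilde\beta}\gamma_f\|_{\mathcal{B}(L^2)}\le\|\langle\cdot\rangle^{\tilde\beta}f\|_{L^\infty}$. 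That concrete device, not a retarded kernel-Strichartz iteration, is what converts $L^2$-in-time control of $\phi$ into the geometric constant $C_{\mathcal{A}}^{m+n+1}$.

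Second, you never isolate the case $m=n=1$, which is exactly where the generic argument breaks and where the paper's new Strichartz estimates are indispensable: the H\"older count $\tfrac{m+n-1}{2d}+\tfrac{2(d-1)}{2d}\ge 1$ forces $m+n\ge3$, so for $(1,1)$ one must instead sandwich the $U$-factor as $\langle\nabla\rangle^{-\beta_0}\int_0^\infty e^{-it\Delta}(w_2*U)(t)e^{it\Delta}dt\,\langle\nabla\rangle^{-\beta_0}\in\mathfrak{S}^{d}$, obtained by interpolating \eqref{interpolation1} with \eqref{eq: Bound on IV} at $\alpha_1=\alpha_2>\tfrac{d-1}{4}$; this is precisely where $\beta_0>\tfrac14$ and $\|\hat w_2\|_{L^{2d/(d-3)}}$ (through $\|w_2*U\|_{L_t^2L_x^{2d/3}}$) enter, with the compensating $\langle\nabla\rangle^{\beta_0}$ pushed onto the two factors $w_1*\phi$ (paid by $\|\langle\cdot\rangle^{\beta_0}\hat w_1\|_{L^\infty}$) and onto $\gamma_f$ (paid by $\beta=\tilde\beta+2$). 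Your bookkeeping --- spreading less than $\beta_0$ of derivative over every one of the $m+n$ potential factors, the rest onto $\gamma_f$ --- is not a viable scheme: the derivative budget coming from Theorem \ref{Strichartz estimates for density functions in the non-relativistic case} is fixed and does not scale with $m+n$, and in the actual proof derivatives touch the potentials only in the $(1,1)$ case. The telescoping argument for \eqref{bound on A2} is fine, but only conditionally on repairing the above.
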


\begin{proof}
We will prove the proposition by the standard duality argument. For notational convenience, we denote $W=w_1*\phi$. By the definition of $\mathcal{A}_{m,n}$, we write
\begin{equation}
\begin{aligned}
&\int_0^\infty\int_{\mathbb{R}^d}\mathcal{A}_{m,n}(\phi)(t) U(t,x)dxdt\\
&=\int_0^\infty\int_{\mathbb{R}^d}w_2*\rho\Big[e^{it\Delta}\mathcal{W}_{W}^{(m)}(t)\gamma_f\mathcal{W}_{W}^{(n)}(t)^* e^{-it\Delta}\Big]U(t,x)dxdt\\
&=\int_0^\infty\int_{\mathbb{R}^d}\rho\Big[e^{it\Delta}\mathcal{W}_{W}^{(m)}(t)\gamma_f\mathcal{W}_{W}^{(n)}(t)^* e^{-it\Delta}\Big](w_2*U)(t,x)dxdt.
\end{aligned}
\end{equation}
Then, by the formal identity 
\begin{equation}\label{rho to trace}
\int_{\mathbb{R}^d}\rho_{\gamma_0}V dx=\textup{Tr}(\gamma_0 V)
\end{equation}
and the cyclicity of the trace, it becomes
\begin{equation}\label{rho to trace}
\begin{aligned}
&\int_0^\infty\int_{\mathbb{R}^d}\mathcal{A}_{m,n}(\phi)(t) U(t,x)dxdt\\
&=\textup{Tr}\Big(\int_0^\infty e^{it\Delta}\mathcal{W}_{W}^{(m)}(t)\gamma_f\mathcal{W}_{W}^{(n)}(t)^* e^{-it\Delta}(w_2*U)(t)dt\Big)\\
&=\textup{Tr}\Big(\int_0^\infty\mathcal{W}_{W}^{(m)}(t)\gamma_f\mathcal{W}_{W}^{(n)}(t)^* e^{-it\Delta}(w_2*U)(t)e^{it\Delta}dt\Big).
\end{aligned}
\end{equation}
Note that the application of the formal identity \eqref{rho to trace} in \eqref{rho to trace application} will be justified by the estimates below.

First, we consider the higher order terms with $m+n\geq 3$. In this case, we employ the following two inequalities, 
\begin{align}
\Big\|\int_0^\infty e^{-it\Delta}V(t)e^{it\Delta}dt\Big\|_{\mathfrak{S}^{2d}}&\leq c\|V\|_{L_t^2L_x^d},\label{interpolation1}\\
\Big\|\int_0^\infty e^{-it\Delta}V(t)e^{it\Delta}dt\langle\nabla\rangle^{-\tilde{\beta}}\Big\|_{\mathfrak{S}^{\frac{2d}{d-1}}}&\leq c\|V\|_{L_t^2L_x^2},\label{interpolation2}
\end{align}
where $\tilde{\beta}=\beta-2>\frac{d-2}{2}$. Here, \eqref{interpolation1} is from Theorem 2 in \cite{FLLS2} and \eqref{interpolation2} can be obtained from the complex interpolation between \eqref{interpolation1} and \eqref{eq: Bound on IV} with $\alpha_1=0$ and $\alpha_2>\frac{d-1}{2}$. Expanding $\mathcal{W}_W^{(m)}(t)$ and $\mathcal{W}_W^{(n)}(t)$ in the expression \eqref{rho to trace} (see \eqref{W_V^n}) and applying the inequality $|\textup{Tr}(AB)|\leq\textup{Tr}(|A||B|)$, we write
\begin{equation}\label{rho to trace application}
\begin{aligned}
&\Big|\int_0^\infty\int_{\mathbb{R}^d}\mathcal{A}_{m,n}(\phi)(t) U(t,x)dxdt\Big|\\
&\leq \textup{Tr}\Big\{\int_0^\infty\Big(\int_0^\infty\cdots \int _0^\infty e^{-it_m\Delta}|W(t_m)|e^{it_m\Delta}\cdots e^{-it_1\Delta}|W(t_1)|e^{it_1\Delta}dt_1\cdots dt_m\Big)\gamma_f\\
&\quad\quad\quad\quad\quad\Big(\int_0^\infty \cdots \int_0^\infty e^{-it_1'\Delta}|W(t_1')|e^{it_1'\Delta} \cdots e^{-it_n'\Delta}|W(t_n')|e^{it_n'\Delta}dt_1'\cdots dt_n'\Big)\\
&\quad\quad\quad\quad\quad\quad e^{-it\Delta}|w_1*U(t)|e^{it\Delta}dt\Big\}\\
&=\textup{Tr}\Big\{\Big(\int_0^\infty e^{-it_1\Delta}|W(t_1)|e^{it_1\Delta}dt_1\Big)\cdots \Big(\int_0^\infty e^{-it_m\Delta}|W(t_m)|e^{it_m\Delta}dt_m\Big)\gamma_f\\
&\quad\quad\quad\cdot\Big(\int_0^\infty e^{-it_n'\Delta}|W(t_n')|e^{it_n'\Delta}dt_n'\Big)\cdots \Big(\int_0^\infty e^{-it_1'\Delta}|W(t_1')|e^{it_1'\Delta}dt_1'\Big)\\
&\quad\quad\quad\cdot\Big(\int_0^\infty e^{-it\Delta}|w_1*U(t)|e^{it\Delta}dt\Big)\Big\}.
\end{aligned}
\end{equation}
When $m,n\geq 1$,  by the H\"older inequality in the Schatten spaces, \eqref{interpolation1} and \eqref{interpolation2}, we obtain
\begin{equation}\label{rho to trace application}
\begin{aligned}
&\int_0^\infty\int_{\mathbb{R}^d}\mathcal{A}_{m,n}(\phi)(t) U(t,x)dxdt\\
&\leq\Big\|\int_0^\infty e^{-it\Delta}|W(t)|e^{it\Delta}dt\Big\|_{\mathfrak{S}^{2d}}^{m-1}\Big\|\int_0^\infty e^{-it\Delta}|W(t)|e^{it\Delta}dt\langle\nabla\rangle^{-\tilde{\beta}}\Big\|_{\mathfrak{S}^{\frac{2d}{d-1}}}\\
&\quad\cdot\|(1-\Delta)^{\tilde{\beta}}\gamma_f\|_{\mathcal{B}(L^2)}\Big\|\langle\nabla\rangle^{-\tilde{\beta}}\int_0^\infty e^{-it\Delta}|W(t)|e^{it\Delta}dt\Big\|_{\mathfrak{S}^{\frac{2d}{d-1}}}\\
&\quad\cdot\Big\|\int_0^\infty e^{-it\Delta}|W(t)|e^{it\Delta}dt\Big\|_{\mathfrak{S}^{2d}}^{n-1}\Big\|\int_0^\infty e^{-it\Delta}|w_2*U(t)|e^{it\Delta}dt\Big\|_{\mathfrak{S}^{2d}}\\
&\leq(c\|W\|_{L_t^2L_x^d})^{m+n-2}(c\|W\|_{L_t^2L_x^2})^2\cdot \|(1+|\cdot|)^{\tilde{\beta}}f\|_{L^\infty}\cdot c\|w_2*U\|_{L_t^2L_x^d}\quad\textup{($W=w_1*\phi$)}\\
&\leq c^{m+n+1}\|\hat{w}_1\|_{L_x^{\frac{2d}{d-2}}}^{m+n-2}\|\hat{w}_1\|_{L^\infty}^2\|\hat{w}_2\|_{L^{\frac{2d}{d-2}}}\|(1+|\cdot|)^\beta f\|_{L^\infty}\|\phi\|_{L_t^2L_x^2}^{m+n}\|U\|_{L_t^2L_x^2},
\end{aligned}
\end{equation}
where $\mathcal{B}(L^2)$ is the operator norm and in the last step, we used that if $r\geq 2$, 
\begin{equation}\label{HY}
\begin{aligned}
\|w*\phi\|_{L^r}&\leq \|\widehat{w*\phi}\|_{L^{r'}}=\|\hat{w}\hat{\phi}\|_{L^{r'}}\quad\textup{(by Hausdorff-Young)}\\
&\leq \|\hat{w}\|_{L^{\frac{2r}{r-2}}}\|\hat{\phi}\|_{L^2}=\|\hat{w}\|_{L^{\frac{2r}{r-2}}}\|\phi\|_{L^2}\quad\textup{(by Plancherel)}.
\end{aligned}
\end{equation}
When either $m=0$ or $n=0$, we give the negative derivative $\langle\nabla\rangle^{-\tilde{\beta}}$ to the integral having $U$ and use \eqref{interpolation2} for that term. Then, estimating as above, we can show that 
\begin{equation}
\begin{aligned}
&\int_0^\infty\int_{\mathbb{R}^d}\mathcal{A}_{m,n}(\phi)(t) U(t,x)dxdt\\
&\leq c^{m+n}\|\hat{w}_1\|_{L_x^{\frac{2d}{d-2}}}^{m+n-1}\|\hat{w}_1\|_{L^\infty}\|\hat{w}_2\|_{L^\infty}\|(1+|\cdot|)^\beta f\|_{L^\infty}\|\phi\|_{L_t^2L_x^2}^{m+n}\|U\|_{L_t^2L_x^2}.
\end{aligned}
\end{equation}
Therefore, by duality, we complete the proof of \eqref{bound on A1} for higher order terms.

It remains to consider the case $m=n=1$. In this case, the inequalities \eqref{interpolation1} and \eqref{interpolation2} does not suffice. Indeed, the first inequality in \eqref{rho to trace application} requires $\frac{1}{2d}\cdot(m+n-1)+\frac{d-1}{2d}\cdot2\geq 1$, i.e., $m+n\geq 3$. Thus, motivated by Corollary \ref{Bound on IV}, in order to upgrade summability, we put negative derivatives on the last term, 
\begin{equation}\label{A11}
\begin{aligned}
&\int_0^\infty\int_{\mathbb{R}^d}\mathcal{A}_{1,1}(\phi)(t) U(t,x)dxdt\\
&=\textup{Tr}\Big(\int_0^\infty\int_0^t \int_0^t e^{-it_1\Delta}W(t_1)e^{it_1\Delta}\gamma_fe^{-it_1'\Delta}W(t_1')e^{it_1'\Delta} e^{-it\Delta}(w_2*U)(t)e^{it\Delta}
dt_1'dt_1dt\Big)\\
&=\textup{Tr}\Big(\int_0^\infty\int_0^t \int_0^t  \langle\nabla\rangle^{\beta_0}e^{-it_1\Delta}W(t_1)e^{it_1\Delta}\gamma_fe^{-it_1'\Delta}W(t_1')e^{it_1'\Delta} \langle\nabla\rangle^{\beta_0}\\
&\quad\quad\quad\quad\quad\quad\quad\cdot \langle\nabla\rangle^{-\beta_0}e^{-it\Delta}(w_2*U)(t)e^{it\Delta} \langle\nabla\rangle^{-\beta_0}dt_1'dt_1dt\Big).
\end{aligned}
\end{equation}
We now claim that 
\begin{equation}\label{A11 claim}
\begin{aligned}
&\textup{Tr}\Big(\int_0^\infty\int_0^t \int_0^t  e^{-it_1\Delta}V_1(t_1)e^{it_1\Delta}\gamma_fe^{-it_1'\Delta}V_2(t_1')e^{it_1'\Delta}\\
&\quad\quad\quad\quad\quad\quad\quad\cdot \langle\nabla\rangle^{-\beta_0}e^{-it\Delta}(w_2*U)(t)e^{it\Delta} \langle\nabla\rangle^{-\beta_0}dt_1'dt_1dt\Big)\\
&\lesssim \|\hat{w}_2\|_{L^{\frac{2d}{d-3}}}\|V_1\|_{L_t^2 L_x^2} \|V_2\|_{L_t^2 L_x^2} \|U\|_{L_t^2 L_x^2}.
\end{aligned}
\end{equation}
Indeed, by complex interpolation between \eqref{interpolation1} and \eqref{eq: Bound on IV} with $\alpha_1=\alpha_2>\frac{d-1}{4}$, we have
\begin{equation}\label{A11 claim'}
\Big\|\langle\nabla\rangle^{-\beta_0}\int_0^\infty e^{-it\Delta}V(t)e^{it\Delta}dt\langle\nabla\rangle^{-\beta_0}\Big\|_{\mathfrak{S}^{d}}\lesssim \|V\|_{L_t^2L_x^\frac{2d}{3}}.
\end{equation}
Thus, repeating \eqref{rho to trace application} but using \eqref{A11 claim'} instead of \eqref{interpolation2},
\begin{equation}
\begin{aligned}
&\textup{Tr}\Big(\int_0^\infty\int_0^t \int_0^t  e^{-it_1\Delta}V_1(t_1)e^{it_1\Delta}\gamma_fe^{-it_1'\Delta}V_2(t_1')e^{it_1'\Delta}\\
&\quad\quad\quad\quad\quad\quad\quad\cdot \langle\nabla\rangle^{-\beta_0}e^{-it\Delta}(w_2*U)(t)e^{it\Delta}\langle\nabla\rangle^{-\beta_0}dt_1'dt_1dt\Big)\\
&\leq\Big\|\int_0^\infty e^{-it\Delta}|V_1(t)|e^{it\Delta}dt\langle\nabla\rangle^{-\tilde{\beta}}\Big\|_{\mathfrak{S}^{\frac{2d}{d-1}}}\|(1-\Delta)^{\tilde{\beta}}\gamma_f\|_{\mathcal{B}(L^2)}\\
&\quad\cdot\Big\|\langle\nabla\rangle^{-\tilde{\beta}}\int_0^\infty e^{-it\Delta}|V_2(t)|e^{it\Delta}dt\Big\|_{\mathfrak{S}^{\frac{2d}{d-1}}}\\
&\quad\cdot\Big\|\langle\nabla\rangle^{-\beta_0}\int_0^\infty e^{-it\Delta}|w_2*U(t)|e^{it\Delta}dt\langle\nabla\rangle^{-\beta_0}\Big\|_{\mathfrak{S}^{d}}\\
&\lesssim \|V_1\|_{L_t^2L_x^2}\|V_2\|_{L_t^2L_x^2}\|(1+|\cdot|)^{\tilde{\beta}} f\|_{L^\infty} \|w_2*U\|_{L_t^2L_x^\frac{2d}{3}}\\
&\lesssim \|\hat{w}_2\|_{L^{\frac{2d}{d-3}}}\|(1+|\cdot|)^{\tilde{\beta}} f\|_{L^\infty}\|V_1\|_{L_t^2 L_x^2} \|V_2\|_{L_t^2 L_x^2} \|U\|_{L_t^2 L_x^2},
\end{aligned}
\end{equation}
where in the last step, we used \eqref{HY}.

Next, distributing derivatives $1-\Delta=1-\sum_{j=1}^d\partial_{x_j}^2$ and then applying \eqref{A11 claim}, we can obtain 
\begin{align*}
&\textup{Tr}\Big(\int_0^\infty\int_0^t \int_0^t  (1-\Delta)e^{-it_1\Delta}V_1(t_1)e^{it_1\Delta}\gamma_fe^{-it_1'\Delta}V_2(t_1')e^{it_1'\Delta}(1-\Delta)\\
&\quad\quad\quad\quad\quad\quad\quad\cdot \langle\nabla\rangle^{-\beta_0}e^{-it\Delta}(w_2*U)(t)e^{it\Delta} \langle\nabla\rangle^{-\beta_0}dt_1'dt_1dt\Big)\\
&\lesssim \|(1-\Delta)V_1\|_{L_t^2 L_x^2} \|(1-\Delta)V_2\|_{L_t^2 L_x^2} \|(1+|\cdot|)^{{\tilde{\beta}}+2} f\|_{L^\infty} \|w_2*U\|_{L_t^2 L_x^{\frac{2d}{3}}}\\
&\leq \|\hat{w}_2\|_{L^{\frac{2d}{d-3}}}\|(1+|\cdot|)^{\beta} f\|_{L^\infty}\|(1-\Delta)V_1\|_{L_t^2 L_x^2} \|(1-\Delta)V_2\|_{L_t^2 L_x^2} \|U\|_{L_t^2 L_x^2}.
\end{align*}
Hence, interpolating it with \eqref{A11 claim}, we get
\begin{equation}
\begin{aligned}
&\textup{Tr}\Big(\int_0^\infty\int_0^t \int_0^t  \langle\nabla\rangle^{\beta_0}e^{-it_1\Delta}V_1(t_1)e^{it_1\Delta}\gamma_fe^{-it_1'\Delta}V_2(t_1')e^{it_1'\Delta} \langle\nabla\rangle^{\beta_0}\\
&\quad\quad\quad\quad\quad\quad\quad\cdot \langle\nabla\rangle^{-\beta_0}e^{-it\Delta}(w_2*U)(t)e^{it\Delta} \langle\nabla\rangle^{-\beta_0}dt_1'dt_1dt\Big)\\
&\lesssim \|V_1\|_{L_t^2 H_x^{\beta_0}} \|V_2\|_{L_t^2 H_x^{\beta_0}} \|w_2*U\|_{L_t^2 L_x^{\frac{2d}{3}}}\\
&\leq \|\hat{w}_2\|_{L^{\frac{2d}{d-3}}}\|(1+|\cdot|)^{\beta} f\|_{L^\infty}\|V_1\|_{L_t^2 H_x^{\beta_0}} \|V_2\|_{L_t^2 H_x^{\beta_0}}\|U\|_{L_t^2 L_x^2}.
\end{aligned}
\end{equation}
Finally, coming back to \eqref{A11}, applying this inequality, we prove that
\begin{equation}
\begin{aligned}
&\int_0^\infty\int_{\mathbb{R}^d}\mathcal{A}_{1,1}(\phi)(t) U(t,x)dxdt\\
&\lesssim\|\hat{w}_2\|_{L^{\frac{2d}{d-3}}}\|(1+|\cdot|)^{\beta} f\|_{L^\infty}\|w_1*\phi\|_{L_t^2 H_x^{\beta_0}} \|w_1*\phi\|_{L_t^2 H_x^{\beta_0}}\|U\|_{L_t^2 L_x^2}\\
&\lesssim \|\langle\cdot\rangle^{\beta_0}\hat{w}_1\|_{L^\infty}^2\|\hat{w}_2\|_{L^{\frac{2d}{d-3}}}\|(1+|\cdot|)^{\beta} f\|_{L^\infty}\|\phi\|_{L_t^2 L_x^2}^2\|U\|_{L_t^2 L_x^2}.
\end{aligned}
\end{equation}

For \eqref{bound on A2}, we decompose $\mathcal{W}_{w_1*\phi}^{(n)}(t)-\mathcal{W}_{w_1*\psi}^{(n)}(t)$ into the sum of $n$ integrals,
\begin{equation}
\begin{aligned}
&(-i)^n\int_0^tdt_n\int_0^{t_n}dt_{n-1}\cdots \int_0^{t_2}dt_1e^{-it_n\Delta}\big(w_1*(\phi-\psi)\big)(t_n) e^{it_n\Delta}\\
&\quad\quad\quad\quad\quad\quad\quad\cdot  e^{-it_{n-1}\Delta}(w_1*\phi)(t_{n-1}) e^{it_{n-1}\Delta}\cdots e^{-it_1\Delta}(w_1*\phi)(t_1)e^{it_1\Delta}\\
&+(-i)^n\int_0^tdt_n\int_0^{t_n}dt_{n-1}\cdots \int_0^{t_2}dt_1e^{-it_n\Delta}(w_1*\psi)(t_n) e^{it_n\Delta}\\
&\quad\quad\quad\quad\quad\quad\quad\cdot e^{-it_{n-1}\Delta}\big(w_1*(\phi-\psi)\big)(t_{n-1}) e^{it_{n-1}\Delta}\cdots e^{-it_1\Delta}(w_1*\phi)(t_1)e^{it_1\Delta}\\
&+\cdots\\
&+(-i)^n\int_0^tdt_n\int_0^{t_n}dt_{n-1}\cdots \int_0^{t_2}dt_1e^{-it_n\Delta}(w_1*\psi)(t_n) e^{it_n\Delta}\\
&\quad\quad\quad\quad\quad\quad\quad\cdot  e^{-it_{n-1}\Delta}(w_1*\psi)(t_{n-1}) e^{it_{n-1}\Delta}\cdots e^{-it_1\Delta}\big(w_1*(\phi-\psi)\big)(t_1)e^{it_1\Delta}.
\end{aligned}
\end{equation}
Using this sum, we decompose the difference
\begin{equation}
\begin{aligned}
&\mathcal{A}_{m,n}(\phi)(t)-\mathcal{A}_{m,n}(\psi)(t)\\
&=w_2*\rho\Big[e^{it\Delta}\big(\mathcal{W}_{w_1*\phi}^{(m)}(t)-\mathcal{W}_{w_1*\psi}^{(m)}(t)\big)\gamma_f\mathcal{W}_{w_1*\phi}^{(n)}(t)^* e^{-it\Delta}\Big]\\
&\quad+w_2*\rho\Big[e^{it\Delta}\mathcal{W}_{w_1*\psi}^{(m)}(t)\gamma_f\big(\mathcal{W}_{w_1*\phi}^{(n)}(t)-\mathcal{W}_{w_1*\psi}^{(n)}(t)\big) e^{-it\Delta}\Big],
\end{aligned}
\end{equation}
into $(m+n)$ terms. For each term, we estimate as in the proof of \eqref{bound on A1}. Collecting all, we obtain \eqref{bound on A2}.
\end{proof}

\section{Bounds on $\mathcal{B}(\phi)$}

We prove the bounds on the operator
\begin{equation*}
    \mathcal{B}(\phi)(t)=w_2*\rho\Big[e^{it\Delta}\mathcal{W}_{w_1*\phi}(t)Q_0\mathcal{W}_{w_1*\phi}(t)^* e^{-it\Delta}\Big]
\end{equation*}
introduced in \eqref{B operator}.
\begin{proposition}[Bounds on $\mathcal{B}(\phi)$]\label{bound on B}
Let $d\geq 3$, $\alpha>\frac{d-2}{2}$ and $\alpha_0$ be given by \eqref{non-relativistic beta} with $\alpha_1=\alpha_2=\alpha$. Suppose that $w=w_1*w_2$, and $|\cdot|^{1/2}\langle\cdot\rangle^{\alpha_0}\hat{w}_1, |\cdot|^{-1/2}\langle\cdot\rangle^{-\alpha_0}\hat{w}_2\in L^\infty$ . Then, there exist small $\epsilon_{\mathcal{B}}>0$ and large $C_\mathcal{B}, C_{\mathcal{B}}'>0$ such that if $\|\phi\|_{L_{t,x}^2}, \|\psi\|_{L_{t,x}^2}\leq\epsilon_\mathcal{B}$, then
\begin{equation}
\begin{aligned}
\|\mathcal{B}(\phi)\|_{L_{t,x}^2}&\leq C_\mathcal{B}\|Q_0\|_{\mathcal{H}^\alpha},\\
\|\mathcal{B}(\phi)-\mathcal{B}(\psi)\|_{L_{t,x}^2}&\leq C_\mathcal{B}'\|Q_0\|_{\mathcal{H}^\alpha}\|\phi-\psi\|_{L_{t,x}^2}.
\end{aligned}
\end{equation}
The constants $\epsilon_\mathcal{B}$, $C_\mathcal{B}$ and $C_{\mathcal{B}}'$ depend only on $d$, $\||\cdot|^{1/2}\langle\cdot\rangle^{\alpha_0}\hat{w}_1\|_{L^\infty}$ and $\||\cdot|^{-1/2}\langle\cdot\rangle^{-\alpha_0}\hat{w}_2\|_{L^\infty}$.
\end{proposition}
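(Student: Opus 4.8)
The plan is to expand both copies of the wave operator, $\mathcal{W}_{w_1*\phi}(t)=\sum_{m\ge 0}\mathcal{W}^{(m)}_{w_1*\phi}(t)$, which gives $\mathcal{B}(\phi)=\sum_{m,n\ge 0}\mathcal{B}_{m,n}(\phi)$ with $\mathcal{B}_{m,n}(\phi)(t):=w_2*\rho\big[e^{it\Delta}\mathcal{W}^{(m)}_{w_1*\phi}(t)Q_0\mathcal{W}^{(n)}_{w_1*\phi}(t)^*e^{-it\Delta}\big]$, and to bound each summand by $C^{m+n}\|\phi\|_{L^2_{t,x}}^{m+n}\|Q_0\|_{\mathcal{H}^\alpha}$, with $C$ depending only on $d$ and the stated $w$-norms. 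Choosing $\epsilon_{\mathcal{B}}$ so small that $C\epsilon_{\mathcal{B}}<\tfrac12$ then makes the double series converge, summing to a multiple of $\|Q_0\|_{\mathcal{H}^\alpha}$. The one term that is not $\phi$-small, $\mathcal{B}_{0,0}(\phi)=w_2*\rho_{e^{it\Delta}Q_0e^{-it\Delta}}$, carries no $\phi$-dependence (so it drops out of the Lipschitz estimate) and is handled directly: Theorem~\ref{Strichartz estimates for density functions in the non-relativistic case} with $\alpha_1=\alpha_2=\alpha$ gives $\big\||\nabla|^{1/2}\rho_{e^{it\Delta}Q_0e^{-it\Delta}}\big\|_{L^2_tH^{\alpha_0}_x}\lesssim\|Q_0\|_{\mathcal{H}^\alpha}$ (the hypotheses $d\ge3$, $\alpha>\tfrac{d-2}2$ force $2\alpha>\tfrac{d-1}2$ and $\alpha_0\ge 0$, so the theorem applies with exactly this $\alpha_0$), and by Plancherel $\|w_2*h\|_{L^2_x}\le\big\||\cdot|^{-1/2}\langle\cdot\rangle^{-\alpha_0}\hat w_2\big\|_{L^\infty}\big\||\nabla|^{1/2}\langle\nabla\rangle^{\alpha_0}h\big\|_{L^2_x}$, so integrating in $t$ yields $\|\mathcal{B}_{0,0}(\phi)\|_{L^2_{t,x}}\lesssim\|Q_0\|_{\mathcal{H}^\alpha}$.

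For $(m,n)\ne(0,0)$ I would run the duality/trace argument used for Proposition~\ref{bound on A}: pairing against $U\in L^2_{t,x}$, using that $w_2$ is even together with $\int\rho_{\gamma_0}V\,dx=\mathrm{Tr}(\gamma_0V)$ and cyclicity, one rewrites $\int\mathcal{B}_{m,n}(\phi)\,U$ as a trace in which the ``test factor'' is $\int_0^\infty e^{-it\Delta}\,(w_2*U)(t)\,e^{it\Delta}\,dt$; here the weighted condition $|\cdot|^{-1/2}\langle\cdot\rangle^{-\alpha_0}\hat w_2\in L^\infty$ is what lets this factor be controlled by $\|U\|_{L^2_{t,x}}$ after absorbing the half-plus-$\alpha_0$ derivative gain that the density Strichartz estimates supply. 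Expanding $\mathcal{W}^{(m)}_{w_1*\phi}$, $\mathcal{W}^{(n)}_{w_1*\phi}$ into their time-ordered $m$- and $n$-fold integrals of $e^{-it_j\Delta}(w_1*\phi)(t_j)e^{it_j\Delta}$, writing $Q_0=\langle\nabla\rangle^{-\alpha}\tilde Q_0\langle\nabla\rangle^{-\alpha}$ with $\tilde Q_0\in\mathfrak S^2$ and $\|\tilde Q_0\|_{\mathfrak S^2}=\|Q_0\|_{\mathcal{H}^\alpha}$, and distributing the weights $\langle\nabla\rangle^{-\alpha}$ (split further when convenient) onto the $w_1*\phi$-factors flanking $\tilde Q_0$, one applies H\"older in the Schatten classes: the ``bulk'' factors $\int_0^\infty e^{-it\Delta}|w_1*\phi|e^{it\Delta}\,dt$ go into $\mathfrak S^{2d}$ by \eqref{interpolation1} together with $\|w_1*\phi\|_{L^2_tL^d_x}\le\|\hat w_1\|_{L^{2d/(d-2)}}\|\phi\|_{L^2_{t,x}}$ (as in \eqref{HY}), the factors adjacent to $\tilde Q_0$ — each carrying a $\langle\nabla\rangle^{-\alpha}$ with $\alpha>\tfrac{d-2}2$ — go into $\mathfrak S^{2d/(d-1)}$ by \eqref{interpolation2} (or, where a finer split is needed, $\mathfrak S^{d}$ by \eqref{A11 claim'} and Corollary~\ref{Bound on IV}), and $\tilde Q_0\in\mathfrak S^2$ contributes the remaining $\tfrac12$; since $Q_0$ lies in $\mathfrak S^2$ (rather than merely being bounded, as $\gamma_f$ was), the Schatten exponents already sum to at least $1$ for $m,n\ge 1$. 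The weighted bounds $\|w_1*\phi\|_{L^2_tH^{\beta_0}_x}\lesssim\|\langle\cdot\rangle^{\beta_0}\hat w_1\|_{L^\infty}\|\phi\|_{L^2_{t,x}}$ and the $w_2$-bound above are precisely what the hypotheses $|\cdot|^{1/2}\langle\cdot\rangle^{\alpha_0}\hat w_1,\ |\cdot|^{-1/2}\langle\cdot\rangle^{-\alpha_0}\hat w_2\in L^\infty$ deliver, so one obtains the per-term bound $C^{m+n}\|Q_0\|_{\mathcal{H}^\alpha}\|\phi\|_{L^2_{t,x}}^{m+n}\|U\|_{L^2_{t,x}}$; dualizing and summing over $(m,n)\ne(0,0)$ gives the first estimate.

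The Lipschitz bound follows by subtraction: $\mathcal{B}_{0,0}$ cancels, each difference $\mathcal{W}^{(m)}_{w_1*\phi}-\mathcal{W}^{(m)}_{w_1*\psi}$ is telescoped into $m$ terms each containing exactly one factor $w_1*(\phi-\psi)$ (as in the derivation of \eqref{bound on A2}), every resulting term is estimated by the scheme above, and the extra factor $\|\phi-\psi\|_{L^2_{t,x}}$ appears while the combinatorial factor $\sim m+n$ is absorbed by the geometric smallness.

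The main obstacle is the derivative bookkeeping for the low-order terms, exactly as it was for $\mathcal{A}_{1,1}$: one must check that for every $(m,n)\ne(0,0)$ — and most delicately for $(m,n)\in\{(1,0),(0,1),(1,1)\}$ — the $2\alpha$ derivatives liberated from $Q_0$ and the $|\nabla|^{1/2}\langle\nabla\rangle^{\alpha_0}$ gain coming from $w_2$ can be allocated so that every operator factor lands in a Schatten class whose exponents sum to at least $1$ while simultaneously every space-time norm of $w_1*\phi$ (resp. $w_2*U$) is controlled by $\|\phi\|_{L^2_{t,x}}$ (resp. $\|U\|_{L^2_{t,x}}$). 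Because $\alpha$ is only required to exceed $\tfrac{d-2}2$ — below the value $\tfrac{d-1}2$ at which a one-sided $\langle\nabla\rangle^{-\alpha}$ would by itself upgrade a propagator integral to Hilbert--Schmidt — the two halves of $Q_0$'s regularity must be used in tandem with the improved estimates \eqref{interpolation2}, \eqref{A11 claim'} and Corollary~\ref{Bound on IV}; getting this balance to close is exactly what pins down the weighted conditions on $\hat w_1$ and $\hat w_2$ in the statement.
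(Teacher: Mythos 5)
Your route is genuinely different from the paper's: the paper never expands $\mathcal{B}$ into the double series $\sum_{m,n}\mathcal{B}_{m,n}$. Instead it observes that $Q_\phi(t)=\mathcal{U}_{w_1*\phi}(t)Q_0\,\mathcal{U}_{w_1*\phi}(t)^*$ solves the linear equation $i\partial_tQ_\phi=[-\Delta+w_1*\phi,Q_\phi]$, $Q_\phi(0)=Q_0$, and estimates the combined quantity $\|Q_\phi\|_{\mathcal{S}^\alpha}+\||\nabla|^{1/2}\rho_{Q_\phi}\|_{L_t^2H_x^{\alpha_0}}$ via the operator-kernel Strichartz estimates of Theorem \ref{Strichartz estimates for operator kernels}, the density Strichartz estimate of Theorem \ref{Strichartz estimates for density functions in the non-relativistic case}, and the fractional Leibniz/Sobolev inequalities, closing by absorption since $\|\phi\|_{L^2_{t,x}}\leq\epsilon_{\mathcal B}$; the point of this design is that $w_1*\phi$ enters \emph{only} through $\||\nabla|^{1/2}(w_1*\phi)\|_{L_t^2H_x^{\alpha_0}}\leq\||\cdot|^{1/2}\langle\cdot\rangle^{\alpha_0}\hat w_1\|_{L^\infty}\|\phi\|_{L^2_{t,x}}$ and $w_2$ only through the multiplier bound $\|w_2*\rho_{Q_\phi}\|_{L^2_{t,x}}\leq\||\cdot|^{-1/2}\langle\cdot\rangle^{-\alpha_0}\hat w_2\|_{L^\infty}\||\nabla|^{1/2}\rho_{Q_\phi}\|_{L_t^2H_x^{\alpha_0}}$, which is exactly why the constants can depend only on the two weighted norms listed in the statement.

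This is also where your proposal has a genuine gap. Proposition \ref{bound on B} assumes only $|\cdot|^{1/2}\langle\cdot\rangle^{\alpha_0}\hat w_1\in L^\infty$ and $|\cdot|^{-1/2}\langle\cdot\rangle^{-\alpha_0}\hat w_2\in L^\infty$, so $\hat w_1$ may behave like $|\xi|^{-1/2}$ near the origin and $\hat w_2$ like $|\xi|^{1/2}\langle\xi\rangle^{\alpha_0}$ at infinity; in particular $\|\hat w_1\|_{L^\infty}$, $\|\langle\cdot\rangle^{\beta_0}\hat w_1\|_{L^\infty}$, $\|\hat w_1\|_{L^{2d/(d-2)}}$ (for $d=3$), $\|\hat w_2\|_{L^{2d/(d-2)}}$ and $\|\hat w_2\|_{L^{2d/(d-3)}}$ can all be infinite, and your claim that these bounds are ``precisely what the hypotheses deliver'' is false. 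But your per-term scheme needs them: \eqref{interpolation2} requires $\|w_1*\phi\|_{L_t^2L_x^2}$, \eqref{interpolation1} is fed with $\|\hat w_1\|_{L^{2d/(d-2)}}$ (this particular piece could be repaired via Sobolev from the weighted norm, but not the others), and \eqref{A11 claim'} requires $\|w_2*U\|_{L_t^2L_x^{2d/3}}$, which is not controlled by $\|U\|_{L^2_{t,x}}$ under the stated $\hat w_2$ condition. The only bound on $w_2*U$ the hypotheses provide is $\||\nabla|^{-1/2}\langle\nabla\rangle^{-\alpha_0}(w_2*U)\|_{L^2_{t,x}}\lesssim\|U\|_{L^2_{t,x}}$, usable only through the two-sided weighted $\mathfrak S^2$ estimate dual to Theorem \ref{Strichartz estimates for density functions in the non-relativistic case}; yet in the terms with $\min\{m,n\}=0$ (and in $(1,1)$) the two $\langle\nabla\rangle^{-\alpha}$ weights liberated from $Q_0$ do not both flank the $w_2*U$ factor, so the allocation you describe does not close with the allowed norms --- and you yourself flag the low-order terms as ``the main obstacle'' without carrying them out. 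As written, the argument would at best prove a weaker statement whose constants depend on the additional $w$-norms assumed in Theorem \ref{thm-main-1}, not the proposition with the stated hypotheses and dependence; to prove the proposition as stated you should abandon the series expansion and argue on the exact equation for $Q_\phi$ as the paper does.
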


\begin{proof} 
For notational convenience, we denote
\begin{equation} \label{sec6:Qphi} 
Q_\phi(t):=e^{it\Delta}\mathcal{W}_{w_1*\phi}(t)Q_0\mathcal{W}_{w_1*\phi}(t)^* e^{-it\Delta}=\mathcal{U}_{w_1*\phi}(t)Q_0 \mathcal{U}_{w_1*\phi}(t)^*.
\end{equation}
Note that by definition, $\mathcal{B}(\phi)=w_2*\rho_{Q_\phi(t)}$.

Recalling \eqref{linear Schrodinger} and \eqref{eq-W-def-1}, 
we see by differentiating \eqref{sec6:Qphi} in $t$ that $Q_\phi$ solves the following equation: 
$$
i\partial_t Q_\phi=[-\Delta+w_1*\phi, Q_\phi]
$$
with initial data $Q_\phi(0)=Q_0$, equivalently, 
\begin{equation}\label{B equation}
Q_\phi(t)=e^{it\Delta}Q_0 e^{-it\Delta}-i\int_0^t e^{i(t-s)\Delta}[w_1*\phi,Q_\phi](s)e^{-i(t-s)\Delta}ds.
\end{equation}
Hence, by the Strichartz estimates in Theorem \ref{Strichartz estimates for operator kernels}, we get
\begin{equation}
\begin{aligned}
\|Q_\phi\|_{\mathcal{S}^\alpha}&\leq c\|Q_0\|_{\mathcal{H}^\alpha}+c\Big\|\la\nabla_x\ra^\alpha\la\nabla_{x'}\ra^\alpha[w_1*\phi,Q_\phi](t,x,x')\Big\|_{L_t^1L_{x,x'}^2}\\
&\leq c\|Q_0\|_{\mathcal{H}^\alpha}+2c\Big\|\la\nabla_x\ra^\alpha\la\nabla_{x'}\ra^\alpha\Big((w_1*\phi)(t,x)Q_\phi(t,x,x')\Big)\Big\|_{L_t^1 L_{x,x'}^2}\\
&\quad +2c\Big\|\la\nabla_x\ra^\alpha\la\nabla_{x'}\ra^\alpha\Big((w_1*\phi)(t,x')Q_\phi(t,x,x')\Big)\Big\|_{L_t^1L_{x,x'}^2},
\end{aligned}
\end{equation}
where the time interval $[0,+\infty)$ is omitted in the norms for notational convenience. Moreover, applying the triangle inequality, the Strichartz estimate in Theorem \ref{Strichartz estimates for density functions in the non-relativistic case} with $\alpha_1=\alpha_2=\alpha$ to the density of \eqref{B equation}, we get
\begin{equation}
\begin{aligned}
\||\nabla|^{1/2}\rho_{Q_\phi}\|_{L_t^2 H_x^{\alpha_0}}&\leq \||\nabla|^{1/2}\rho_{e^{it\Delta}Q_0 e^{-it\Delta}}\|_{L_t^2 H_x^{\alpha_0}}\\
&\quad+\int_\mathbb{R}\||\nabla|^{1/2}\rho_{e^{i(t-s)\Delta}[w_1*\phi,Q_\phi](s)e^{-i(t-s)\Delta}}\|_{L_t^2 H_x^{\alpha_0}}ds\\
&\leq c\|Q_0\|_{\mathcal{H}^\alpha}+c\int_\mathbb{R}\|e^{-is\Delta}[w_1*\phi,Q_\phi](s)e^{is\Delta}\|_{\mathcal{H}_x^{\alpha}}ds\\
&\leq c\|Q_0\|_{\mathcal{H}^\alpha}+2c\Big\|\la\nabla_x\ra^\alpha\la\nabla_{x'}\ra^\alpha\Big((w_1*\phi)(t,x)Q_\phi(t,x,x')\Big)\Big\|_{L_t^1 L_{x,x'}^2}\\
&\quad +2c\Big\|\la\nabla_x\ra^\alpha\la\nabla_{x'}\ra^\alpha\Big((w_1*\phi)(t,x')Q_\phi(t,x,x')\Big)\Big\|_{L_t^1L_{x,x'}^2}.
\end{aligned}
\end{equation}
By the fractional Leibniz rule and Sobolev inequalities with the choices of $\alpha_0$ and $\alpha$ (both are applied only for the $x$-variable),
\begin{equation}
\begin{aligned}
&\big\|\la\nabla_x\ra^\alpha\la\nabla_{x'}\ra^\alpha\big((w_1*\phi)(t,x)Q_\phi(t,x,x')\big)\big\|_{L_t^1 L_{x,x'}^2}\\
&\lesssim \|(w_1*\phi)\|_{L_t^2L_x^d}\|\la\nabla_x\ra^\alpha\la\nabla_{x'}\ra^\alpha Q_\phi(t,x,x')\|_{L_t^2 L_x^{\frac{2d}{d-2}}L_{x'}^2}\\
&\quad+\||\nabla|^\alpha (w_1*\phi)\|_{L_{t,x}^2}\|\la\nabla_{x'}\ra^\alpha Q_\phi(t,x,x')\|_{L_t^2 L_x^\infty L_{x'}^2}\\
&\lesssim\||\nabla|^{1/2}(w_1*\phi)\|_{L_t^2H_x^{\alpha_0}}\|\la\nabla_x\ra^\alpha\la\nabla_{x'}\ra^\alpha Q_\phi(t,x,x')\|_{L_t^2 L_x^{\frac{2d}{d-2}}L_{x'}^2}\\
&\leq\||\cdot|^{1/2}\langle\cdot\rangle^{\alpha_0}\hat{w}_1\|_{L^\infty}\|\phi\|_{L_{t,x}^2}\|Q_\phi\|_{\mathcal{S}^\alpha}.
\end{aligned}
\end{equation}
We estimate $(w_1*\phi)(t,x')Q_\phi(t,x,x')$ in a similar way, interchanging $x$ and $x'$. Thus, we prove that if $\|\phi\|_{L_{t,x}^2}\leq\epsilon_\mathcal{B}$, then
\begin{equation}\label{B equation contraction}
\begin{aligned}
\|Q_\phi\|_{\mathcal{S}^\alpha}+\||\nabla|^{1/2}\rho_{Q_\phi}\|_{L_t^2 H_x^{\alpha_0}}&\leq 2c\|Q_0\|_{\mathcal{H}^\alpha}+2c'\||\cdot|^{1/2}\langle\cdot\rangle^{\alpha_0}\hat{w}_1\|_{L^\infty}\|\phi\|_{L_{t,x}^2}\|Q_\phi\|_{\mathcal{S}^\alpha}\\
&\leq 2c\|Q_0\|_{\mathcal{H}^\alpha}+2c'\epsilon_\mathcal{B}\||\cdot|^{1/2}\langle\cdot\rangle^{\alpha_0}\hat{w}_1\|_{L^\infty}\|Q_\phi\|_{\mathcal{S}^\alpha}.
\end{aligned}
\end{equation}
We take $\epsilon_\mathcal{B}:=\frac{1}{4c'\||\cdot|^{1/2}\langle\cdot\rangle^{\alpha_0}\hat{w}_1\|_{L^\infty}}$. Then, we get
\begin{equation}\label{Q phi bound}
\|Q_\phi\|_{\mathcal{S}^\alpha}+\||\nabla|^{1/2}\rho_{Q_\phi}\|_{L_t^2 H_x^{\alpha_0}}\leq 4c\|Q_0\|_{\mathcal{H}^\alpha}.
\end{equation}
As a result, by \eqref{HY}, we conclude that 
\begin{equation}
\|\mathcal{B}(\phi)\|_{L_{t,x}^2}=\|w_2*\rho_{Q_\phi}\|_{L_{t,x}^2}\leq \||\cdot|^{-1/2}\langle\cdot\rangle^{-\alpha_0}\hat{w}_2\|_{L^\infty}\||\nabla|^{1/2}\rho_{Q_\phi}\|_{L_t^2 H_x^{\alpha_0}}\leq C_\mathcal{B}\|Q_0\|_{\mathcal{H}^\alpha},
\end{equation}
where $C_{\mathcal{B}}=4c\||\cdot|^{-1/2}\langle\cdot\rangle^{-\alpha_0}\hat{w}_2\|_{L^\infty}$.

For the difference
\begin{equation}
\begin{aligned}
Q_\phi(t)-Q_\psi(t)&=-i\int_0^t e^{i(t-s)\Delta}\big[w_1*(\phi-\psi), Q_\phi\big](s)e^{-i(t-s)\Delta}ds\\
&\quad-i\int_0^t e^{i(t-s)\Delta}\big[w_1*\psi, Q_\phi-Q_\psi\big](s)e^{-i(t-s)\Delta}ds,
\end{aligned}
\end{equation}
repeating the estimates in the proof of \eqref{B equation contraction}, we prove that if $\|\phi\|_{L_{t,x}^2}, \|\psi\|_{L_{t,x}^2}\leq\epsilon_\mathcal{B}$, then
\begin{equation}
\begin{aligned}
&\|Q_\phi-Q_\psi\|_{\mathcal{S}^\alpha}+\||\nabla|^{1/2}\rho_{Q_\phi-Q_\psi}\|_{L_t^2 H_x^{\alpha_0}}\\
&\leq c'\||\nabla|^{1/2}w_1*(\phi-\psi)\|_{L_t^2H_x^{\alpha_0}}\|Q_\phi\|_{\mathcal{S}^\alpha}+c'\||\nabla|^{1/2}w_1*\psi\|_{L_t^2H_x^{\alpha_0}}\|Q_\phi-Q_\psi\|_{\mathcal{S}^\alpha}\\
&\leq c'\||\cdot|^{1/2}\langle\cdot\rangle^{\alpha_0}\hat{w}_1\|_{L^\infty}\|\phi-\psi\|_{L_{t,x}^2}\|Q_\phi\|_{\mathcal{S}^\alpha}\\
&\quad+c'\||\cdot|^{1/2}\langle\cdot\rangle^{\alpha_0}\hat{w}_1\|_{L^\infty}\|\psi\|_{L_{t,x}^2}\|Q_\phi-Q_\psi\|_{\mathcal{S}^\alpha}\\
&\leq c'\||\cdot|^{1/2}\langle\cdot\rangle^{\alpha_0}\hat{w}_1\|_{L^\infty}\|\phi-\psi\|_{L_{t,x}^2}\cdot 4c\|Q_0\|_{\mathcal{H}^\alpha}\quad\textup{(by \eqref{Q phi bound})}\\
&\quad+c'\||\cdot|^{1/2}\langle\cdot\rangle^{\alpha_0}\hat{w}_1\|_{L^\infty}\cdot\epsilon_\mathcal{B}\cdot\|Q_\phi-Q_\psi\|_{\mathcal{S}^\alpha}.
\end{aligned}
\end{equation}
By the choice of $\epsilon_\mathcal{B}$, 
\begin{equation}
\begin{aligned}
\||\nabla|^{1/2}\rho_{Q_\phi-Q_\psi}\|_{L_t^2 H_x^{\alpha_0}}\leq 4cc'\||\cdot|^{1/2}\langle\cdot\rangle^{\alpha_0}\hat{w}_1\|_{L^\infty}\|Q_0\|_{\mathcal{H}^\alpha}\|\phi-\psi\|_{L_{t,x}^2}.
\end{aligned}
\end{equation}
Thus, by \eqref{HY}, we conclude that 
\begin{equation}
\begin{aligned}
\|\mathcal{B}(\phi)-\mathcal{B}(\psi)\|_{L_{t,x}^2}&=\|w_2*(\rho_{Q_\phi}-\rho_{Q_\psi})\|_{L_{t,x}^2}\\
&\leq \||\cdot|^{-1/2}\langle\cdot\rangle^{-\alpha_0}\hat{w}_2\|_{L^\infty}\||\nabla|^{1/2}\rho_{Q_\phi-Q_\psi}\|_{L_t^2 H_x^{\alpha_0}}\\
&\leq C_\mathcal{B}'\|Q_0\|_{\mathcal{H}^\alpha}\|\phi-\psi\|_{L_{t,x}^2},
\end{aligned}
\end{equation}
where $C_{\mathcal{B}}'=4cc'\||\cdot|^{1/2}\langle\cdot\rangle^{\alpha_0}\hat{w}_1\|_{L^\infty}\||\cdot|^{-1/2}\langle\cdot\rangle^{-\alpha_0}\hat{w}_2\|_{L^\infty}$.
\end{proof}

\section{Proof of the main theorem}\label{proof of the main theorem}

First, we prove that 
\begin{equation}
\Gamma(\phi)=(1+\mathcal{L})^{-1}\Big\{\sum_{m,n=1}^\infty\mathcal{A}_{m,n}(\phi)+\mathcal{B}(\phi)\Big\}
\end{equation}
is contractive in a small ball in $L_{t,x}^2$. Let $\epsilon>0$ be a sufficiently small number. Suppose that $\|Q_0\|_{\mathcal{H}^\alpha}\leq\epsilon$ and
\begin{equation}
\|\phi\|_{L_{t,x}^2}, \|\psi\|_{L_{t,x}^2}\leq 2 C_{\mathcal{B}}\|1+\mathcal{L}\|_{L_{t,x}^2\to L_{t,x}^2}^{-1}\|Q_0\|_{\mathcal{H}^\alpha}=:R.
\end{equation}
Note that $R$ is also a sufficiently small number, since $\|Q_0\|_{\mathcal{H}^\alpha}$ is assumed to be small. Then, by Proposition \ref{bound on A} and \ref{bound on B}, 
\begin{equation}
\|\Gamma(\phi)\|_{L_{t,x}^2}\leq \|1+\mathcal{L}\|_{L_{t,x}^2\to L_{t,x}^2}^{-1}\Big\{\sum_{m,n=1}^\infty C_{\mathcal{A}}^{m+n+1}\|\la\cdot\ra^\alpha f\|_{L^\infty}R^{m+n}+C_{\mathcal{B}}\|Q_0\|_{\mathcal{H}^\alpha}\Big\}\leq R.
\end{equation}
where in the second inequality, we used that the sum $\sum_{m,n=1}^\infty C_{\mathcal{A}}^{m+n+1}\|\la\cdot\ra^\alpha f\|_{L^\infty}R^{m+n}$ is $O(R^2)$, so it is bounded by $C_{\mathcal{B}}\|Q_0\|_{\mathcal{H}^\alpha}=O(R)$. Similarly, we prove that 
\begin{equation}
\begin{aligned}
&\|\Gamma(\phi)-\Gamma(\psi)\|_{L_{t,x}^2}\\
&\leq \|1+\mathcal{L}\|_{L_{t,x}^2\to L_{t,x}^2}^{-1}\Big\{\sum_{m,n=1}^\infty (m+n)C_{\mathcal{A}}^{m+n+1}\|\la\cdot\ra^\alpha f\|_{L^\infty}(2R)^{m+n-1}+C_{\mathcal{B}}'\epsilon\Big\}\|\phi-\psi\|_{L_{t,x}^2}\\
&\leq\frac{1}{2}\|\phi-\psi\|_{L_{t,x}^2}.
\end{aligned}
\end{equation}
Thus, by the contraction mapping theorem, there exists a unique $\phi\in L_{t,x}^2$ such that $\phi=\Gamma(\phi)$.

Next, we derive the equation \eqref{wave operator formulation} from $\phi=\Gamma(\phi)$. Precisely, we claim that $Q(t)$, defined by
\begin{equation}\label{definition of Q}
Q(t):=e^{it\Delta}\mathcal{W}_{w_1*\phi}(t)(\gamma_f+Q_0)\mathcal{W}_{w_1*\phi}(t)^*e^{-it\Delta}-\gamma_f,
\end{equation}
is a solution to \eqref{wave operator formulation}. Indeed, it follows from the series expansion for the wave operator (see \eqref{wave operator sum}) and its boundedness (see \eqref{boundedness of wave operator}) that $Q(t)$ is well-defined in $\mathfrak{S}^{2d}$. Moreover, we have
\begin{equation}
\begin{aligned}
\|w_2*\rho_Q-\phi\|_{L_{t,x}^2}&=\Big\|-\mathcal{L}(\phi)+\sum_{m,n=1}^\infty\mathcal{A}_{m,n}(\phi)+\mathcal{B}(\phi)-\phi\Big\|_{L_{t,x}^2}\\
&=\Big\|-\mathcal{L}(\phi)+(1+\mathcal{L})(1+\mathcal{L})^{-1}\Big\{\sum_{m,n=1}^\infty\mathcal{A}_{m,n}(\phi)+\mathcal{B}(\phi)\Big\}-\phi\Big\|_{L_{t,x}^2}\\
&=\|-\mathcal{L}(\phi)+(1+\mathcal{L})\Gamma(\phi)-\phi\|_{L_{t,x}^2}\\
&=\|-\mathcal{L}(\phi)+(1+\mathcal{L})\phi-\phi\|_{L_{t,x}^2}=0\quad\textup{(by $\Gamma(\phi)=\phi$)},
\end{aligned}
\end{equation}
where the first identity follows from straightforward calculations using the infinite series expansion of the wave operator and the definitions of $\mathcal{L}$, $\mathcal{A}_{m,n}$ and $\mathcal{B}$. Now, inserting $\phi=w_2*\rho_Q$ into \eqref{definition of Q}, we conclude that $Q$ satisfies the equation \eqref{wave operator formulation},
\begin{equation}
\begin{aligned}
Q(t)&=\mathcal{U}_{w_1*w_2*\rho_Q}(t)(\gamma_f+Q_0)\mathcal{U}_{w_1*w_2*\rho_Q}(t)^*-\gamma_f\\
&=\mathcal{U}_{w*\rho_Q}(t)(\gamma_f+Q_0)\mathcal{U}_{w*\rho_Q}(t)^*-\gamma_f
\end{aligned}
\end{equation}
in $C_t([0,+\infty);\mathfrak{S}^{2d})$.

Finally, by \eqref{HY}, we prove the desired global-in-time bound, 
\begin{equation}
\begin{aligned}
\|w*\rho_Q\|_{L_t^2 L_x^d}&\leq \|w_1*w_2*\rho_Q\|_{L_t^2 L_x^d}\leq \|\hat{w}_1\|_{L^{\frac{2d}{d-2}}}\|w_2*\rho_Q\|_{L_{t,x}^2}\\
&\leq\|\hat{w}_1\|_{L^{\frac{2d}{d-2}}}\|\hat{w}_2\|_{L^\infty}\|\phi\|_{L_{t,x}^2}<\infty,
\end{aligned}
\end{equation}
which implies scattering in $\mathfrak{S}^{2d}$ by Lemma \ref{key lemma}.

\end{document}